\DeclareMathAlphabet\mathbfcal{OMS}{cmsy}{b}{n}
\renewcommand{\leq}{\leqslant}
\renewcommand{\le}{\leqslant}
\renewcommand{\geq}{\geqslant}
\renewcommand{\ge}{\geqslant}
\definecolor{shadethmcolor}{rgb}{1,0.871,0.790}
\definecolor{shaderulecolor}{rgb}{0.651,0.074,0.090}
\newcommand{\R}{\mathbb{R}}
\newcommand{\N}{\mathbb{N}}
\providecommand\@dotsep{5}\def\listtodoname{List of Todos}\def\listoftodos{\hypersetup{linkcolor=black}\@starttoc{tdo}\listtodoname\hypersetup{linkcolor=blue}}\makeatother
\newcommand{\verti}[1]{{\left\vert\kern-0.25ex\left\vert\kern-0.25ex\left\vert #1 
    \right\vert\kern-0.25ex\right\vert\kern-0.25ex\right\vert}}
\newcommand\C{\mathcal{C}}
\newcommand\CC{\mathcal{C}}
\newcommand\yy{\widehat{y}}
\newcommand\ytt{y_{tt}}
\newcommand\yxx{\Delta y}
\newcommand\intq{\int_Q}
\def\dx{\,\textnormal{d}x}
\def\dt{\textnormal{d}t}
\def\d{\,\textnormal{d}}
\title{On the exact boundary controllability of semilinear wave equations}
\author{
 Sue Claret \thanks{Universit\'e Clermont Auvergne, CNRS, LMBP, F-63000 Clermont-Ferrand, France; sue.claret@uca.fr.}
	\and J\'er\^ome Lemoine \thanks{Universit\'e Clermont Auvergne, CNRS, LMBP, F-63000, Clermont-Ferrand, France; jerome.lemoine@uca.fr.}
\and Arnaud M\"unch \thanks{Universit\'e Clermont Auvergne, CNRS, LMBP, F-63000 Clermont-Ferrand, France;  arnaud.munch@uca.fr. Corresponding author.}
}
\begin{document}
\maketitle

\begin{abstract}
We address the exact boundary controllability of the semilinear wave equation $\ytt-\yxx + f(y)=0$ posed over a bounded domain $\Omega$ of $\mathbb{R}^d$.
Assuming that $f$ is continuous and satisfies the condition $\limsup_{\vert r\vert\to \infty} \vert f(r)\vert /(\vert r\vert \ln^p\vert 
r\vert)\leq \beta$ for some $\beta$ small enough and some $p\in [0,3/2)$, we apply the Schauder fixed point theorem to prove the uniform controllability for initial data in $L^2(\Omega)\times H^{-1}(\Omega)$. 
Then, assuming that $f$ is in $\C^1(\R)$  and satisfies the condition $\limsup_{\vert r\vert\to \infty} \vert f^\prime(r)\vert/\ln^p\vert 
r\vert\leq \beta$, we apply the Banach fixed point theorem and exhibit a strongly convergent sequence to a state-control pair for the semilinear equation. 
\end{abstract}
		
\textbf{AMS Classifications:} 35L71, 93B05.

\textbf{Keywords:} Semilinear wave equation, Exact boundary controllability, Carleman estimates, Fixed point.

\section{Introduction and main results}
	

Let $\Omega$ be a bounded domain in $\mathbb{R}^d$ of class $\CC^2$ and let $T>0$. We set $Q:=\Omega\times (0,T)$ and $\Sigma:=\partial\Omega\times (0,T)$. We consider the semilinear problem in $y=y(x,t)$
\begin{equation}\label{main_control_problem}
Ly + f(y) =0 \text{ in } Q, \qquad y =v 1_{\Gamma_0}\text{ on } \Sigma, \qquad (y(\cdot,0), y_t(\cdot,0))= (u_0,u_1) \text{ in } \Omega,
\end{equation}
where  $L:=\partial^2_{t}-\Delta$ denotes the wave operator,  $(u_0, u_1) \in \boldsymbol{H}:=L^2(\Omega) \times H^{-1}(\Omega)$ is a given initial state, $v\in L^2(\Sigma)$ is a control function and $f$ a continuous function over $\R$. $\Gamma_0$ denotes a non empty subset of $\partial\Omega$. 
	
The exact boundary controllability problem  associated to \eqref{main_control_problem} states as follows: 
	 given $T>0$, $\Gamma_0\subset \partial\Omega$ and $(u_0, u_1), (z_0,z_1)\in \boldsymbol{H}$, find a control function $v\in L^2(\Sigma)$ and $y\in \C^0([0,T];L^2(\Omega))\cap \C^1([0,T]; H^{-1}(\Omega))$ solution of \eqref{main_control_problem} and such that $(y(\cdot,T), y_t(\cdot,T))=(z_0,z_1)$ in $\Omega$.

The linear problem \eqref{main_control_problem} with $f\equiv 0$ is exactly controllable provided that $T>0$ and $\Gamma_0$ are sufficiently large (see \cite[Theorem 6.1, p. 60]{Lions-1} and \cite[Theorem 4.9, p. 1058]{Bardos-Lebeau-Rauch}). In the nonlinear case, a first exact boundary controllability result has been given in \cite[Theorem 2.1]{Zuazua-2} assuming $f$ globally Lipschitz and initial data in $H^{\gamma}_0(\Omega) \times H^{\gamma-1}(\Omega)$ for $\gamma \in (0,1), \gamma\ne \frac{1}{2}$ leading to Dirichlet control in $H^{\gamma}_0(0,T; L^2(\Gamma_0))$. A Schauder fixed point argument is used coupled with the HUM method developed in \cite{Lions-1}. Still assuming $f^\prime\in L^\infty(\R)$, \cite[Theorem 1.1]{Lasi-Trig} covers the case $\gamma=0$ and generalizes the result to semilinear abstract systems by using a global inversion theorem. 
We also mention~\cite{coron-trelat-wave-06} where a boundary controllability result is proved in the one-dimensional case for a specific class of initial and final data and~$T$ large enough by a quasi-static deformation approach.

Assuming $(u_0,u_1)\in \boldsymbol{V}:=H_0^1(\Omega)\times L^2(\Omega)$, the boundary controllability may also be obtained indirectly with the domain extension from interior controllability results. In this respect, 
we mention \cite[Theorem 1]{Zuazua-ihp} assuming $\Omega=(0,1)$, $T>2$ and that $f\in \C^1(\R)$ does not grow faster at infinity than $\beta \vert r\vert  \ln^2(\vert r\vert)$ for some $\beta>0$ small enough, for a global controllability result in $\boldsymbol{V}$.  This result has been extended to any spatial dimension, first in \cite[Theorem 3.1]{Li_Zhang_2000} and then in \cite[Theorem 4.5, page 116]{Zhang_springerbriefs}
assuming that $f\in \C^1(\R)$ does not grow faster at infinity than $\beta \vert r\vert  \ln^p(\vert r\vert)$ for some $\beta>0$ small enough for $p=1/2$ and any $0<p<3/2$ respectively. The above results are based on the Schauder theorem together with an estimate of the cost of control for linear wave equations with potential derived using Carleman estimates (we refer to \cite[Theorem 2.2, page 8]{Duyckaerts_2008}). Eventually, we mention 
~\cite{DehmanLebeau} dealing with subcritical nonlinearities satisfying the sign condition $rf(r)\geq 0$ for all $r\in \mathbb{R}$ (weakened later in~\cite{JolyLaurent2014} to an asymptotic sign condition leading to a semi-global controllability result, in the sense that the final data $(z_0,z_1)$ must be prescribed in a precise subset of  $\boldsymbol{V}$). 

In this work, we directly address the exact boundary controllability for \eqref{main_control_problem} under the usual conditions on $(u_0,u_1), T$ and $\Gamma_0$ encountered in the linear case but with respect to \cite{Lasi-Trig}, by replacing the condition $f^\prime\in L^\infty(\R)$ by the slightly super-linear condition used in \cite{Zhang_springerbriefs}. Our result is as follows.


%
\begin{theorem}\label{main_th} 
  For any $x_0\in \mathbb{R}^d\backslash\overline{\Omega}$, let $\Gamma_1 := \{x\in \partial\Omega : (x-x_0)\cdot \nu(x)>0\}$ and $\Gamma_0\subset \partial\Omega$ such that  $ \textrm{dist}(\Gamma_1,\partial\Omega\setminus\Gamma_0)>0$ and let  $T>2\max_{x\in\overline{\Omega}}\vert x-x_0\vert$.
\begin{itemize}
\item Assume that $f\in \CC^0(\mathbb{R})$ and that there exists $0\le p<3/2$ such that $f$  satisfies
\begin{enumerate}[label=$\bf (H_p)$,leftmargin=1.5cm]
\item \label{H1}$\exists \alpha_1,\alpha_2, \beta^\star >0, \hspace{0.2cm} |f(r)| \leq \alpha_1 + |r|\left( \alpha_2 + \beta^\star\ln_+^p(r) \right), \hspace{0.25cm} \forall r\in \mathbb{R}.$
\end{enumerate}
If $\beta^\star  $ is small enough,  then for any initial state $(u_0, u_1)$ and final state $(z_0,z_1)$ in $\boldsymbol{H}$,
 system \eqref{main_control_problem} is exactly controllable with controls in $L^2(\Sigma)$.
 \item Assume that $f\in \CC^1(\mathbb{R})$ and that there exists $0\le p<3/2$ such that $f$  satisfies
\begin{enumerate}[label=$\bf (H'_{p})$,leftmargin=1.5cm]
\item \label{asymptotic_behavior_prime_p}   $ \exists \alpha, \beta^\star>0, \hspace{0.2cm} |f^\prime(r)|\leq \alpha+ \beta^\star \ln_+^{p}(r), \hspace{0.25cm} \forall r\in \mathbb{R}.$
\end{enumerate}
 If $\beta^\star  $ is small enough, then for any initial state $(u_0, u_1)$ and final state $(z_0,z_1)$ in $\boldsymbol{H}$, one can construct a non trivial sequence $(y_k, v_k)_{k\in \mathbb N^\star}$ that converges strongly to a controlled pair $(y,v)$ in $\big( \C^0([0,T];L^2(\Omega))\cap \C^1([0,T];H^{-1}(\Omega)) \big)\times L^2(0,T; L^2(\partial\Omega))$ for system \eqref{main_control_problem}. Moreover, the convergence of $(y_k,v_k)_{k\in \N^\star}$ holds at least with a linear rate for the norm  $\Vert \rho\cdot \Vert_{L^2(Q)}+\Vert \rho\cdot \Vert_{L^2(\Sigma)}$ where $\rho=\rho(x,t,s)$ is defined in \eqref{weights} and $s$ is chosen sufficiently large depending on $\Vert (u_0,u_1)\Vert_{\boldsymbol{H}}$
and $\Vert (z_0,z_1)\Vert_{\boldsymbol{H}}$.

 \item Assume that $f\in \CC^0(\mathbb{R})$ and satisfies $\bf (H_{3/2})$, i.e. \ref{H1} with $p=3/2$. If $\beta^\star  $ is small enough,  then for any initial state $(u_0, u_1)$ and final state $(z_0,z_1)$ in $\boldsymbol{V}$, system \eqref{main_control_problem} is exactly controllable with controls in $H^1(0,T;L^2(\partial\Omega))\cap \C^0([0,T];H^{1/2}(\partial\Omega))$.
 \end{itemize}
\end{theorem}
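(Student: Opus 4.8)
\emph{Proof strategy for the last item.} The plan is to run the Schauder fixed point scheme of the first item, but now in the energy space $\boldsymbol{V}$, which is the natural setting when one wants a control one derivative smoother than $L^2(\Sigma)$. First, using $\bf (H_{3/2})$ together with the continuity of $f$ near the origin, I would split $f(r)=h_1(r)+g(r)\,r$ with $h_1\in\C^0(\R)$ bounded and $g\in\C^0(\R)$ such that $|g(r)|\leq C\big(1+\ln_+^{3/2}(r)\big)$ for all $r\in\R$, so that \eqref{main_control_problem} becomes $Ly+g(y)\,y=-h_1(y)$, a linear wave equation with potential $a:=g(y)$ and right-hand side $-h_1(y)\in L^\infty(Q)$. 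For $\xi$ in a closed ball $B_R$ of $\C^0([0,T];H_0^1(\Omega))\cap\C^1([0,T];L^2(\Omega))$, I let $\Lambda(\xi):=y_\xi$ be the finite-energy solution of
\begin{equation*}
Ly_\xi+g(\xi)\,y_\xi=-h_1(\xi)\ \text{ in }Q,\qquad y_\xi=v_\xi\,1_{\Gamma_0}\ \text{ on }\Sigma,\qquad (y_\xi,\partial_t y_\xi)(\cdot,0)=(u_0,u_1),
\end{equation*}
where $v_\xi$ is the boundary control of minimal weighted norm steering $(u_0,u_1)$ to $(z_0,z_1)$ at time $T$. Its existence, uniqueness and the extra regularity $v_\xi\in H^1(0,T;L^2(\partial\Omega))\cap\C^0([0,T];H^{1/2}(\partial\Omega))$ for $\boldsymbol{V}$-data follow from the HUM construction for the linear wave equation with a potential and from the associated trace regularity (see \cite{Lions-1,Bardos-Lebeau-Rauch}), once one disposes of the relevant observability inequality for the adjoint state with potential $g(\xi)$.

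The quantitative core is the dependence of the control cost on the potential. Because $\xi\in B_R\subset\C^0([0,T];H_0^1(\Omega))$ is not bounded on $Q$ in general, the right quantity is the norm of $a=g(\xi)$ in $L^\infty(0,T;L^q(\Omega))$ for a suitable finite $q$ (depending on $d$); for such potentials the Carleman estimates recalled in \cite{Duyckaerts_2008} and used in \cite{Zhang_springerbriefs} should yield an observability constant — hence a bound on $\Vert v_\xi\Vert$ and on $\Vert y_\xi\Vert_{\C^0(H_0^1)\cap\C^1(L^2)}$ — of the form $C\exp\!\big(C(1+\Vert a\Vert_{L^\infty(0,T;L^q(\Omega))}^{2/3})\big)\big(1+\Vert(u_0,u_1)\Vert_{\boldsymbol{V}}+\Vert(z_0,z_1)\Vert_{\boldsymbol{V}}\big)$, the exponent $2/3$ being the one governing the wave equation. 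Here the $\boldsymbol{V}$-regularity is decisive: combining the Sobolev embedding of $H_0^1(\Omega)$ with the elementary fact that $\ln_+$ grows more slowly than any positive power, $\bf (H_{3/2})$ gives $\Vert g(\xi(t))\Vert_{L^q(\Omega)}\leq C+C\beta^\star\ln_+^{3/2}\!\big(C+\Vert\xi(t)\Vert_{H_0^1(\Omega)}\big)$, hence $\Vert a\Vert_{L^\infty(0,T;L^q(\Omega))}\leq C+C\beta^\star\ln_+^{3/2}(C+R)$. Feeding this into the cost estimate, the exponent becomes $C\big(1+(\beta^\star)^{2/3}\ln(C+R)\big)$, so the cost grows only \emph{polynomially} in $R$, like $C_0\,(C+R)^{\kappa}$ with $\kappa=\kappa(\beta^\star)=C\,(\beta^\star)^{2/3}$. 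This is exactly where the smallness of $\beta^\star$ enters: choosing $\beta^\star$ small enough that $\kappa<1$ gives $C_0(C+R)^{\kappa}\big(1+\Vert(u_0,u_1)\Vert_{\boldsymbol{V}}+\Vert(z_0,z_1)\Vert_{\boldsymbol{V}}\big)\leq R$ for $R$ large, that is $\Lambda(B_R)\subset B_R$.

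To close the argument I would equip $B_R$ with a topology making it compact and convex — for instance a weighted $L^2(Q)$ norm, or the strong $\C^0([0,T];L^2(\Omega))$ topology coming from the Aubin--Lions lemma and the gain of regularity in the linear solve — verify that $\Lambda$ is continuous there, the only non-routine point being that $\xi_n\to\xi$ forces $g(\xi_n)\to g(\xi)$ in the $L^\infty(0,T;L^q(\Omega))$ norm and hence $v_{\xi_n}\to v_\xi$, $y_{\xi_n}\to y_\xi$ (via the uniform observability over $B_R$ and a weak/strong convergence argument), and then invoke Schauder's theorem to obtain a fixed point $y=\Lambda(y)$. Such a $y$ solves \eqref{main_control_problem} with control $v=v_y\in H^1(0,T;L^2(\partial\Omega))\cap\C^0([0,T];H^{1/2}(\partial\Omega))$, which is the claim.

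The hard part is the endpoint $p=3/2$ itself. Unlike the regime $p<3/2$ of the first item, where the control cost is sub-polynomial in $R$ and the self-map property of $\Lambda$ is automatic, here the cost is exactly polynomial, so $\Lambda(B_R)\subset B_R$ survives only if the exponent $\kappa(\beta^\star)$ stays strictly below $1$; this forces one to track precisely the linear dependence of the Carleman constant on $\beta^\star$ and to use that the growth allowed by $\bf (H_{3/2})$ is logarithmic rather than a genuine power. A secondary difficulty, already visible above, is that $g(\xi)\notin L^\infty(Q)$ in general, so one must rely on the version of the observability estimate valid for potentials in $L^\infty(0,T;L^q(\Omega))$ and be careful about the functional setting in which continuity and compactness of $\Lambda$ are established — which is a further point where the extra regularity of $\boldsymbol{V}$-data over $\boldsymbol{H}$-data is used.
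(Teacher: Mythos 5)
Your proposal addresses only the third item, and it follows the classical ``potential'' linearization of Zuazua/Li--Zhang (write $f(y)=h_1(y)+g(y)y$, view $a:=g(\xi)$ as a potential, and invoke the observability cost $\exp(C\Vert a\Vert^{2/3})$), which is \emph{not} the route taken in the paper: there the nonlinearity is kept as a source term $B=-f(\widehat y)$, the control is the minimizer of the Carleman-weighted functional of \Cref{Lemma_relation_control} and \Cref{Thm-regularity-one}, and the self-map property is obtained in the class $\widetilde\C(s)$ of \eqref{closed_convex_set_Schauder_tilde} by the interpolation inequality of \Cref{estimation_lemma1-bis}, with the smallness of $\beta^\star$ entering through the coefficient $\beta^\star C c^{3/2}$ rather than through an exponent in a cost estimate.

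More importantly, your argument has a genuine gap exactly at the endpoint $p=3/2$ you are trying to reach. The exponent $2/3$ in the observability cost $\exp\big(C\Vert a\Vert^{2/3}\big)$ is the one valid for potentials in $L^\infty(Q)$; for potentials that are only in $L^\infty(0,T;L^q(\Omega))$ with $q<\infty$ --- which is all you can get from $\xi\in\C^0([0,T];H_0^1(\Omega))$ when $d\ge 2$, since $H_0^1(\Omega)\not\hookrightarrow L^\infty(\Omega)$ --- the estimate of \cite{Duyckaerts_2008} gives an exponent $\frac{1}{3/2-d/q}$, which is strictly larger than $2/3$ for every finite $q$. With $\Vert a\Vert_{L^\infty(0,T;L^q(\Omega))}\le C+C\beta^\star\ln_+^{3/2}(C+R)$ this yields a cost of order $\exp\big(C(\beta^\star)^{\theta}(\ln R)^{3\theta/2}\big)$ with $3\theta/2>1$, i.e. super-polynomial in $R$ no matter how small $\beta^\star$ is, and the inclusion $\Lambda(B_R)\subset B_R$ fails for large $R$. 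This is precisely why \cite{Zhang_springerbriefs} stops at $p<3/2$ and why the paper abandons the potential formulation in favour of the weighted source-term estimates: in the class $\widetilde\C(s)$ the bound $\Vert\rho\widehat y\ln_+^{3/2}(\rho\widehat y)\Vert_{L^2(Q)}\le C s^2$ is obtained by Gagliardo--Nirenberg interpolation between $\Vert\rho\widehat y\Vert_{L^\infty(0,T;L^2(\Omega))}$ and $\Vert\nabla(\rho\widehat y)\Vert_{L^2(Q)}$, which has no analogue in your scheme. (A secondary, also unresolved, point is that you would still need the $H^1(0,T;L^2(\partial\Omega))\cap\C^0([0,T];H^{1/2}(\partial\Omega))$ regularity of the HUM control for the wave equation with an $L^\infty(0,T;L^q(\Omega))$ potential, with constants quantified in $\Vert a\Vert$; the paper devotes its entire Appendix to the corresponding statement in the source-term setting.)
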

\Cref{main_th} extends and generalizes to any dimension \cite{Bhandari_MCSS_2022} devoted to $d=1$. It relies on fixed point approaches in a functional class $\C(s)$
dependent of a Carleman parameter $s$ large enough. For any $\widehat y\in \C(s)\subset L^\infty(0,T; L^2(\Omega))$, the fixed point operator $\Lambda_s$ is such that $y:=\Lambda_s(\widehat y)\in \C(s)$ is a controlled solution of 
\begin{equation}\label{linear_control_problem}
Ly= B  \text{ in } Q, \qquad y =v\, 1_{\Gamma_0} \text{ on } \Sigma, \qquad (y(\cdot,0), y_t(\cdot,0))= (u_0,u_1)  \text{ in } \Omega,
\end{equation} 
%
%
with $B= -f(\widehat y)$ satisfying $(y(\cdot, T),y_t(\cdot,T)) = (z_0,z_1)$ through the boundary function $v$; the pair $(y,v)$ is chosen as the minimizer of a quadratic functional (see \eqref{extremal-2}) involving Carleman weights and cut-off time and space functions. While $\C(s)$ is a subset of $L^\infty(Q)$ in \cite{Bhandari_MCSS_2022}, the class here is a subset of $L^\infty(0,T; L^2(\Omega))$ allowing to consider any spatial dimension with a simpler proof. 

We emphasize that we get the exact controllability for \eqref{main_control_problem} under the conditions encountered in the linear situation: the controllability time $T>T(x_0):=2\max_{x\in\overline{\Omega}}\vert x-x_0\vert$ and support $\Gamma_0$ satisfy the usual geometric conditions introduced in \cite{Lions-1} while the initial data is assumed in $\boldsymbol{H}$.
This is in contrast with \cite[Theorem 4.5]{Zhang_springerbriefs} devoted to distributed controllability where the initial data is taken in $\boldsymbol{V}$ with a controllability time greater
than $\max(2T(x_0), c \,d^{3/2})$ for some $c>0$ (we refer to \cite[Remark 4.7 page 118]{Zhang_springerbriefs}). Moreover, we consider nonlinearities $f$ in $\C^0(\R)$ while \cite[Theorem 4.5]{Zhang_springerbriefs} assumes $f$ in $\C^1(\R)$; this is due to the fact that the linearization \eqref{linear_control_problem}, where the nonlinear term is seen as a right hand side, does not involve any derivative of $f$. As a matter of fact, the first item in \Cref{main_th} can not be obtained from the extension domain method and controllability results for the distributed case.  

Remark also that the third item includes the value $p=3/2$ in the exponent of the logarithm term (see \ref{H1}) contrary to \cite[Theorem 4.5]{Zhang_springerbriefs}. Last, the second item, with a growth condition on the derivative of $f$, provides a constructive way to approximate control-state pair for \eqref{main_control_problem}, which is fundamental for applications. The regularity is used to estimate some $L^2(L^q)$ norm of $f(y_1)-f(y_2)$ for any elements $y_1,y_2$ in $\C(s)$ (see \Cref{Prop-Banach_contraction}). It relaxes the H\"older assumption on $f^\prime$ used in \cite[Theorem 2.3]{Munch-Trelat} 
based on a Newton type linearization. 
To our knowledge, this is the first result leading to a convergent approximation of boundary controls for superlinear nonlinearities without smallness assumption notably on the initial condition and target (in contrast to the recent works \cite{Rosier2021, Weiss_semilinear}).


As in \cite{Bhandari_MCSS_2022}, the crucial technical point in the analysis is a regularity property of the state-control trajectories $(y,v)$ for 
\eqref{linear_control_problem}. We show and use that if the initial condition belongs to $\boldsymbol{V}$ and if the right hand side $B$ belongs to $L^2(Q)$, then the controlled trajectory $y$ solution of \eqref{linear_control_problem} so that $(y,v)$ is the minimizer of the quadratic functional $J_s$ (see Remark \ref{remark_J_s}) belongs to $\C^0([0,T], H^1(\Omega))$.

\paragraph{Outline -} Section \ref{sec:wave_linearized} discusses the exact null controllability of \eqref{linear_control_problem} and provides precise estimates of the control-state pair $(y,v)$ in term of the Carleman parameter $s$ according to the regularity of the data $(u_0,u_1)$ and of the right hand side $B$: \Cref{Lemma_relation_control} for $(u_0,u_1,B)\in \boldsymbol{H}\times L^2(0,T;H^{-r}(\Omega))$, $r\in [0,1]$, $r\not=\frac12$, and \Cref{Thm-regularity-one} for $(u_0,u_1,B)\in \boldsymbol{V}\times L^2(Q)$. Section \ref{Controllability_results} is devoted to the proof of \Cref{main_th}:  in Section \ref{sec:FixedPointPart1}, we employ the Schauder fixed point theorem to the operator $\Lambda_s$ (see \eqref{operator_fixed_point}) defined on the class $\C(s)\subset L^\infty(0,T;L^2(\Omega))$ and prove the first item. In Section \ref{sec:FixedPointPart2}, we prove that, if the nonlinearity $f$ satisfies \ref{asymptotic_behavior_prime_p}, then the operator $\Lambda(s)$ is contracting leading to the second item. Eventually, in Section \ref{sec:FixedPointPart3} assuming the initial data in $\boldsymbol{V}$, we define the operator $\Lambda(s)$ on a class $\widetilde\C(s)$ subset of $H^1(Q)$ and reach the limit case $p=3/2$ in the logarithmic exponent, as announced in the third item. 

In the sequel, $C$ denotes a generic constant which may changes from line to line, but depends only on $\Omega$ and $T$.

\section{Controllability results for the linear wave equation}\label{sec:wave_linearized}

Existence of $L^2(\Sigma)$ controls for \eqref{linear_control_problem} with initial data in $\boldsymbol{H}$ and right hand side in $L^2(0,T;H^{-1}(\Omega))$ is well-known (we refer to \cite[chapter 2]{Lions-1}); corresponding controlled solution belongs to $\C^0([0,T];L^2(\Omega))\cap \C^1([0,T];H^{-1}(\Omega))$. 
For any initial data  $(u_0,u_1)\in \boldsymbol{V}$, right hand side $B$ in $L^2(Q)$ and $T>0$ large enough, we analyze the existence of a control function $v\in H^1(0,T;L^2(\partial\Omega))\cap \C^0([0,T];H^{1/2}(\partial\Omega))$ such that the solution $y$ of \eqref{linear_control_problem} 
		satisfies $(y(\cdot,T), y_t(\cdot,T))=(z_0,z_1)$ and is bounded in $L^\infty(0,T; L^2(\Omega))$. Moreover, we aim to get precise weighted estimates of a particular state-control pair  in term of the data, which will be crucial to handle the nonlinear system \eqref{main_control_problem}. As in \cite{Bhandari_MCSS_2022}, we employ a global Carleman estimates from \cite{Ervedoza-Carleman} as a fundamental tool.

We introduce the usual geometric condition (see \cite[Condition (1.2)]{Ervedoza-Carleman}): for any $x_0\notin \overline{\Omega}$, we introduce $\Gamma_1, \Gamma_0\subset\partial\Omega$ such that
\begin{equation}\label{Geometric_Condition}
\Gamma_1 := \{x\in \partial\Omega : (x-x_0)\cdot \nu(x)>0\}, \qquad \textrm{dist}(\Gamma_1,\partial\Omega\setminus\Gamma_0)>0.
\end{equation}
Let $\Psi\in \CC^2(\partial\Omega)$ be a cut-off  function such that 
\begin{equation}\label{function_Psi}
			0\le \Psi\le 1  \quad \text{on  } \partial\Omega, \qquad
			\Psi=1  \quad \text{on  } \Gamma_1,\qquad
			\Psi=0 \quad\text{on } \partial\Omega\backslash \Gamma_0.
		\end{equation}
We assume that 
\begin{equation}\label{condT}
T> 2\max_{\overline{\Omega}} |x-x_0|
\end{equation}		
and define, for any $\delta >0$ such that $T-2\delta > 2\max_{\overline{\Omega}} |x-x_0|$, a cut-off function $\eta\in\mathcal{C}_c^1(\mathbb{R})$ satisfying 
	\begin{align}\label{function_eta}
			0\leq \eta(t) \leq 1 \text{ in } (\delta,T-\delta), \qquad
			\eta(t) =0 \text{ in } (-\infty,\delta] \cup [T-\delta, +\infty).	
	\end{align} 
Then, for any $\beta\in (0,1)$ and $\lambda>0$, we define the functions 
$\psi(x,t) = |x-x_0|^2 -\beta \big(t-\frac{T}{2}\big)^2 + M_0$, $\phi(x,t)=e^{\lambda \psi(x,t)}$ in $Q$, 
	with $M_0>0$ large enough so that $\psi>0$ in $\overline{Q}$. Then, for all $s\geq s_0$, we define the weight function
	\begin{equation}\label{weights}
		\rho(x,t):= e^{-s\phi(x,t)}  \quad \forall (x,t) \in Q.
	\end{equation}
Remark that $e^{-cs}\leq \rho\leq e^{-s}$ in $Q$
 with $c:=\Vert \phi\Vert_{L^\infty(Q)}$ and $\rho,\rho^{-1}\in \C^\infty(\overline{Q})$.  \\
Let then $P:=\{w\in \C^0([0,T];H_0^1(\Omega))\cap \C^1([0,T];L^2(\Omega)), Lw \in L^2(Q)\}$
and recall that $\partial_\nu w \in L^2(\Sigma)$ for every $w\in P$ (see \cite[Theorem 4.1]{Lions-1}). The global Carleman estimate mentioned earlier reads as follows.

\begin{prop}\label{Thm-Carleman} 
	     For any $x_0\notin\overline{\Omega}$, we assume \eqref{Geometric_Condition} and \eqref{condT}. There exists $s_0>0$, $\lambda>0$ and $C>0$, such that for any $s\geq s_0$ and every $w\in P$	    	%
	    	\begin{multline}\label{Carleman-in-0-T}
	    		s \intq \rho^{-2} (| w_{t}|^2 + | \nabla w |^2)\dx\dt  + s^3 \intq  \rho^{-2} | w|^2  \dx\dt \\
	    		+	s \int_\Omega \rho^{-2}(0) (|w_t(\cdot,0)|^2 + | \nabla w (\cdot,0)|^2)\dx + s^3 \int_\Omega  \rho^{-2}(0) |w(\cdot,0)|^2\dx   \\
	    		\leq C\biggl(\intq \rho^{-2} |Lw|^2\dx\dt   + s \int_{\Sigma} \eta^2(t)\Psi(x)\rho^{-2} | \partial_\nu w|^2\dx\dt\biggr).  
	    	\end{multline}
	\end{prop}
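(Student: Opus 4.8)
The plan is to establish \eqref{Carleman-in-0-T} by the classical conjugated‑operator/multiplier method for hyperbolic global Carleman estimates, in the form used in \cite{Ervedoza-Carleman}. The first step is to pass to the conjugated variable $z:=\rho^{-1}w=e^{s\phi}w$. Since $w(\cdot,t)\in H_0^1(\Omega)$ for all $t$, we have $w=w_t=0$ and $\nabla w=(\partial_\nu w)\,\nu$ on $\Sigma$ (first for smooth $w$, then by density), hence also $z=z_t=0$ and $|\nabla z|^2=\rho^{-2}|\partial_\nu w|^2$ on $\Sigma$. A direct computation gives the exact decomposition $\rho^{-1}Lw=\rho^{-1}L(\rho z)=L_1z+L_2z$, where
\[
\begin{gathered}
L_1z:=z_{tt}-\Delta z+s^2\big(\phi_t^2-|\nabla\phi|^2\big)z,\\
L_2z:=-2s\phi_t z_t+2s\nabla\phi\cdot\nabla z-s\big(\phi_{tt}-\Delta\phi\big)z,
\end{gathered}
\]
$L_1$ being formally self‑adjoint and $L_2$ formally skew‑adjoint (its zero‑order term is chosen exactly for that). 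Taking $L^2(Q)$‑norms and using $\Vert a+b\Vert^2\ge 2(a,b)$, we get $2(L_1z,L_2z)_{L^2(Q)}\le\intq\rho^{-2}|Lw|^2\dx\dt$, so everything reduces to bounding the weighted energy from below by the cross term $2(L_1z,L_2z)_{L^2(Q)}$, up to boundary contributions.

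Second, I would expand $2(L_1z,L_2z)_{L^2(Q)}$ by integrating by parts over $Q$: each term is an interior density or a pure divergence, and one is left with (i) an interior quadratic form in $(z_t,\nabla z,z)$ with $s$‑ and $\lambda$‑dependent coefficients built from the derivatives of $\phi$; (ii) a boundary integral over $\Sigma$ which, using $z=z_t=0$ there, collapses to a negative multiple of $s\int_\Sigma\phi\,(x-x_0)\cdot\nu(x)\,\rho^{-2}|\partial_\nu w|^2\dx\dt$; and (iii) two boundary‑in‑time densities at $t=0$ and $t=T$ whose coefficients are proportional to $\phi_t=\lambda\phi\psi_t=-2\lambda\beta\phi\,(t-T/2)$. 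Because the term $-\beta(t-T/2)^2$ in $\psi$ makes $\psi_t(\cdot,0)=\beta T>0$ and $\psi_t(\cdot,T)=-\beta T<0$, these two densities can be arranged to carry the sign that lets them be kept on the left‑hand side: the one at $t=0$ then produces exactly the $t=0$ terms of \eqref{Carleman-in-0-T} (this is how the energy at the initial time survives), and the one at $t=T$ is non‑negative and simply discarded.

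Third, and this is the core of the matter, I would prove the positivity of the interior quadratic form for the explicit weight. With $\nabla\psi=2(x-x_0)$, $\psi_t=-2\beta(t-T/2)$, and using that $x_0\notin\overline{\Omega}$ makes $|\nabla\psi|=2|x-x_0|$ uniformly positive on $\overline{\Omega}$, one checks that, for $\lambda$ fixed large, the coefficients of the dominant terms $s^3\intq\rho^{-2}|z|^2\dx\dt$ and $s\intq\rho^{-2}(|z_t|^2+|\nabla z|^2)\dx\dt$ are bounded below by positive constants: the first because its leading $\lambda$‑power is a positive multiple of $\phi^3|\nabla\psi|^4$, the second because it is the quantitative H\"ormander pseudo‑convexity inequality for the wave operator, which for this weight — the space‑time Hessian of $\psi$ being $\mathrm{diag}(2,\dots,2,-2\beta)$ — amounts to $2(1-\beta)\tau^2>0$ on the characteristic cone $\{\tau^2=|\xi|^2\}$, hence holds, uniformly on $Q$, precisely because $\beta\in(0,1)$. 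The condition $T>2\max_{\overline{\Omega}}|x-x_0|$ enters through the choice of $\beta$: it allows picking $\beta\in(0,1)$ with also $\beta T^2>4\max_{\overline{\Omega}}|x-x_0|^2$, which forces $\max_{\overline{\Omega}}\psi(\cdot,0)<M_0<\min_{\overline{\Omega}}\psi(\cdot,T/2)$, i.e.\ $\psi$ — hence $\rho^{-2}$ — strictly larger in the bulk of the time interval than near its endpoints, the gap needed to carry the estimate down to $t=0$. The remaining terms — the lower‑order‑in‑$s$ errors from the integrations by parts, and the cross‑terms generated when passing back from $z$ to $w$ via $z_t=\rho^{-1}(w_t+s\phi_t w)$, $\nabla z=\rho^{-1}(\nabla w+s\nabla\phi\,w)$ — are of strictly lower order and are absorbed for $s\ge s_0$.

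Finally, I would trim the boundary term. The $\Sigma$‑integral, being a negative multiple of $s\int_\Sigma\phi\,(x-x_0)\cdot\nu(x)\,\rho^{-2}|\partial_\nu w|^2\dx\dt$, is non‑negative on $(\partial\Omega\setminus\Gamma_1)\times(0,T)$ (where $(x-x_0)\cdot\nu\le0$) and so stays harmlessly on the left, while on $\Gamma_1\times(0,T)$ it is moved to the right where, since $\Psi\equiv1$ on $\Gamma_1$, $0\le\Psi\le1$ and $\mathrm{dist}(\Gamma_1,\partial\Omega\setminus\Gamma_0)>0$, it is dominated by $Cs\int_\Sigma\Psi\,\rho^{-2}|\partial_\nu w|^2\dx\dt$. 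This already gives \eqref{Carleman-in-0-T} with an observation over all of $(0,T)$; to localize it to $\mathrm{supp}\,\eta\subset[\delta,T-\delta]$, one repeats the argument with the multiplier damped by $\eta$ (equivalently, applies the estimate to $\eta$‑truncated quantities), so that the boundary term comes out with the factor $\eta^2(t)$, at the price of interior errors supported where $\eta'\neq0$, i.e.\ near $t=0,T$; but there $\rho^{-2}=e^{2s\phi}$ is, by the gap established above, strictly smaller than in the bulk, and those errors — together with a standard energy estimate for the wave equation, available since $Lw\in L^2(Q)$ — are reabsorbed for $s$ large. The main obstacle is precisely the pointwise positivity computation of the third step, together with the boundary‑ and endpoint‑in‑time bookkeeping of the last step needed to land exactly on the term $s\int_\Sigma\eta^2\Psi\,\rho^{-2}|\partial_\nu w|^2\dx\dt$ while retaining the full energy at $t=0$. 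Since this is by now a standard estimate, one may also simply invoke \cite{Ervedoza-Carleman}.
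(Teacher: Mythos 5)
Your proposal is correct and coincides with the paper's proof: the paper establishes \eqref{Carleman-in-0-T} purely by invoking \cite[Theorem 2.5 and Remark 2.9]{Ervedoza-Carleman}, and your sketch is a faithful reconstruction of the conjugation/multiplier argument proved there (splitting the conjugated operator into self-adjoint and skew-adjoint parts, pseudo-convexity of $\psi$ for $\beta\in(0,1)$, the sign of the time-boundary terms at $t=0$, and localization of the boundary observation by $\eta^2\Psi$), ending with the same citation. The only caveat is that the leading-in-$\lambda$ coefficient of the $s^3$ term is a multiple of $\phi^3(\psi_t^2-|\nabla\psi|^2)^2$ rather than of $\phi^3|\nabla\psi|^4$ and may degenerate, which is precisely where the sub-leading terms in $\lambda$ must be exploited as in the reference; since you defer to \cite{Ervedoza-Carleman} for these details, this does not affect the validity of your argument.
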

	\begin{proof}
	We refer to \cite[Theorem 2.5 and Remark 2.9]{Ervedoza-Carleman}. 
	\end{proof}
	
\subsection{Estimates for the state-control pair in $\C^0([0,T];L^2(\Omega))\times L^2(\Sigma)$} \label{Section-regularity}

In all the sequel, we suppose that $s_0\ge 1$.
\Cref{Thm-Carleman} allows to deduce the controllability for \eqref{linear_control_problem}
with estimates of the state-control pair in $L^2(Q)\times L^2(\Sigma)$. For any $s\geq s_0$, we define the bilinear form 
\begin{align}\label{form_bilinear}
(w,z)_{P,s} := \intq \rho^{-2} Lw Lz \dx\dt  + s  \int_{\Sigma}  \eta^2(t)\Psi(x)\rho^{-2} \partial_\nu w  \partial_\nu z\dx\dt, \quad \forall w,z\in P.
\end{align}
According to \eqref{Carleman-in-0-T}, \eqref{form_bilinear} defines a scalar product in $P$ and if $P_s$ denotes $P$ endowed with this scalar product, then $P_s$ is an Hilbert space. 	
	We now state the controllability result for the system \eqref{linear_control_problem} (without loss of generality in the null controllability case, for which $(z_0,z_1)=(0,0)$ in $\Omega$).

	\begin{prop}\label{Lemma_relation_control}
	  For any $x_0\notin\overline{\Omega}$, we assume \eqref{Geometric_Condition} and \eqref{condT}. Let $\eta\in \C_c^1(\mathbb{R})$ and $\Psi\in \mathcal{C}^2(\partial\Omega)$ be cut-off functions satisfying  \eqref{function_Psi} and \eqref{function_eta} respectively.
For $s\geq s_0 $, $B \in L^2(0,T, H^{-r}(\Omega))$, $r\in [0,1]$, $r\neq 1/2$ and  $(u_0, u_1) \in \boldsymbol{H}$, there exists a unique $w \in P_s$ such that, for all $z\in P_s$,  
		\begin{align}\label{bilinear equation}
			(w, z)_{P,s}=  <u_1,\,z(\cdot,0)>_{H^{-1}(\Omega),H_0^1(\Omega)}  - \int_\Omega u_0\, z_t(\cdot,0)\dx  + <B, z>_{L^2(0,T; H^{-1}(\Omega)), L^2(0,T;H_0^1(\Omega))}.
		\end{align} 
	 Then $v:= s\eta^2\Psi\rho^{-2}\partial_\nu w $ is a control function for \eqref{linear_control_problem} and $y:=\rho^{-2}L w$ is the associated controlled trajectory, that is $y(\cdot,T)=y_t(\cdot,T)=0$ in $\Omega$. 
	 
		Moreover, there exists a constant $C_r>0$ independent of $s$ such that
		\begin{equation}\label{weighted_estimate_control_traj-frac}
		\begin{aligned}
			\|\rho &y\|_{L^2(Q)} + s^{-1/2}\left\|\frac{\rho}{\eta\Psi^{1/2}} v\right\|_{L^2(\Sigma)}+s^{-2}\|\rho y\|_{L^\infty(0,T;L^2(\Omega))}+s^{-2}\|(\rho y)_t\|_{L^\infty(0,T;H^{-1}(\Omega))}\\
			&\leq C_r \left(s^{r-3/2} \|\rho B\|_{L^2(0,T;H^{-r}(\Omega))} + s^{-1/2}  \|\rho(0)  u_0\|_{L^2(\Omega)} + s^{-1/2}   \|\rho(0) u_1\|_{H^{-1}(\Omega)}  \right).
		\end{aligned}
		\end{equation}
\end{prop}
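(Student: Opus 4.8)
The plan is to solve the variational identity \eqref{bilinear equation} by the Riesz representation theorem, to recognise the pair $(y,v)$ built from its solution as a controlled trajectory of \eqref{linear_control_problem} via the transposition formulation, and finally to read off \eqref{weighted_estimate_control_traj-frac} by testing \eqref{bilinear equation} with $w$, the only delicate point being the $L^\infty$-in-time bounds. Since \Cref{Thm-Carleman} already tells us that \eqref{form_bilinear} is a scalar product and that $P_s$ is a Hilbert space, the first task is merely to check that $z\mapsto \langle u_1,z(\cdot,0)\rangle-\int_\Omega u_0 z_t(\cdot,0)\dx+\langle B,z\rangle$ is continuous on $P_s$; this is where \eqref{Carleman-in-0-T} is used quantitatively. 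From \eqref{Carleman-in-0-T} I would extract, for $z\in P$, the estimates $\|\rho^{-1}(0)z(\cdot,0)\|_{H_0^1(\Omega)}+\|\rho^{-1}(0)z_t(\cdot,0)\|_{L^2(\Omega)}\le Cs^{-1/2}\|z\|_{P_s}$ (the $x$-gradient of $\rho^{-1}(0)$, of size $s\,\rho^{-1}(0)$, being absorbed by the $s^3$-term), together with $\|\rho^{-1}z\|_{L^2(0,T;L^2(\Omega))}\le Cs^{-3/2}\|z\|_{P_s}$ and $\|\rho^{-1}z\|_{L^2(0,T;H_0^1(\Omega))}\le Cs^{-1/2}\|z\|_{P_s}$, and then interpolate the last two to get $\|\rho^{-1}z\|_{L^2(0,T;H_0^r(\Omega))}\le C_r s^{r-3/2}\|z\|_{P_s}$ — this is exactly where $r\neq 1/2$ is needed, so that $[L^2(\Omega),H_0^1(\Omega)]_r=H_0^r(\Omega)$ and $(H_0^r(\Omega))'=H^{-r}(\Omega)$. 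Writing each term of the linear form as a pairing of $\rho(0)u_0$, $\rho(0)u_1$, resp.\ $\rho B$ against $\rho^{-1}(0)z_t(\cdot,0)$, $\rho^{-1}(0)z(\cdot,0)$, resp.\ $\rho^{-1}z$, Cauchy--Schwarz bounds it by $K_r\|z\|_{P_s}$ with $K_r:=C_r\big(s^{-1/2}\|\rho(0)u_0\|_{L^2(\Omega)}+s^{-1/2}\|\rho(0)u_1\|_{H^{-1}(\Omega)}+s^{r-3/2}\|\rho B\|_{L^2(0,T;H^{-r}(\Omega))}\big)$, and Riesz yields a unique $w\in P_s$ with $\|w\|_{P_s}\le K_r$.

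Next I would set $y:=\rho^{-2}Lw$ (so that $\rho y=\rho^{-1}Lw\in L^2(Q)$) and $v:=s\eta^2\Psi\rho^{-2}\partial_\nu w$, which is supported in $\Gamma_0$ because $\Psi\equiv 0$ on $\partial\Omega\setminus\Gamma_0$. Plugging a generic $z\in P_s$ into \eqref{bilinear equation} and rewriting $\intq\rho^{-2}Lw\,Lz=\intq y\,Lz$ and $s\int_\Sigma\eta^2\Psi\rho^{-2}\partial_\nu w\,\partial_\nu z=\int_\Sigma v\,\partial_\nu z$ turns it into the transposition identity $\intq y\,Lz+\int_\Sigma v\,\partial_\nu z=\langle u_1,z(\cdot,0)\rangle-\int_\Omega u_0 z_t(\cdot,0)\dx+\langle B,z\rangle$ for every $z\in P_s$. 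Restricting first to $z\in P_s$ with $(z(\cdot,T),z_t(\cdot,T))=(0,0)$, whose images under $L$ exhaust $L^2(Q)$, identifies $y$ with the (unique) transposition solution of \eqref{linear_control_problem} with boundary datum $v1_{\Gamma_0}$ and initial datum $(u_0,u_1)$, so that in particular $y\in\C^0([0,T];L^2(\Omega))\cap\C^1([0,T];H^{-1}(\Omega))$; taking then arbitrary $z\in P_s$ and using that the attainable final traces $(z(\cdot,T),z_t(\cdot,T))$ range over a dense subset of $\boldsymbol V$ forces $(y(\cdot,T),y_t(\cdot,T))=(0,0)$, i.e.\ $(y,v)$ is a controlled pair.

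For the estimate \eqref{weighted_estimate_control_traj-frac} I would take $z=w$ in \eqref{bilinear equation}, which with the bounds of the first step gives $\|w\|_{P_s}^2\le K_r\|w\|_{P_s}$, hence $\|w\|_{P_s}\le K_r$. From the mere definition of $\|\cdot\|_{P_s}$ one then gets directly $\|\rho y\|_{L^2(Q)}=\|\rho^{-1}Lw\|_{L^2(Q)}\le\|w\|_{P_s}$ and $s^{-1/2}\big\|\tfrac{\rho}{\eta\Psi^{1/2}}v\big\|_{L^2(\Sigma)}=s^{1/2}\|\eta\Psi^{1/2}\rho^{-1}\partial_\nu w\|_{L^2(\Sigma)}\le\|w\|_{P_s}$, and also $\|\rho v\|_{L^2(\Sigma)}\le s^{1/2}\|w\|_{P_s}$ since $0\le\eta,\Psi\le 1$. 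The remaining — and main — point is the $L^\infty$-in-time control of $\rho y$ and $(\rho y)_t$. Here I would use that $z:=\rho y$ solves a wave equation $Lz=\rho B+\mathcal{R}$, where the lower-order remainder $\mathcal{R}=\mathcal{R}(z,\nabla z,z_t)$ has coefficients polynomial in $s$, with vanishing final data $(z(\cdot,T),z_t(\cdot,T))=(0,0)$ (from the previous step) and with the already-controlled boundary datum $\rho v1_{\Gamma_0}$; a weighted energy estimate for this problem, arranged so as to keep only polynomial powers of $s$, should give $\|\rho y\|_{L^\infty(0,T;L^2(\Omega))}+\|(\rho y)_t\|_{L^\infty(0,T;H^{-1}(\Omega))}\le Cs^2\|w\|_{P_s}$. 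Combining the four bounds with $\|w\|_{P_s}\le K_r$ yields \eqref{weighted_estimate_control_traj-frac}.

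The hardest part is exactly this last weighted $L^\infty$ estimate: one must exploit carefully the structure of the weight $\rho$ (in particular the vanishing of the final data and the already-established weighted bounds on $\rho y$ and $\rho v$) in order to control the dependence on the Carleman parameter and avoid an exponential loss in $s$; by contrast the Riesz step and the transposition identification are routine once \eqref{Carleman-in-0-T} is at hand.
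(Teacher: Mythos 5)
Your first three steps (the continuity of the linear form via the Carleman estimate and interpolation in $r$, the Riesz representation, the transposition identification of $(y,v)$, and the bounds on $\|\rho y\|_{L^2(Q)}$ and $s^{-1/2}\|\frac{\rho}{\eta\Psi^{1/2}}v\|_{L^2(\Sigma)}$ read off from $\|w\|_{P_s}\le K_r$) coincide with the paper's argument and are correct. The gap is in the last step, which is precisely the part you yourself single out as the hardest. You propose to obtain the $L^\infty$-in-time bounds by a forward ``weighted energy estimate'' for $z:=\rho y$, which solves $Lz=\rho B+\mathcal{R}$ with $\mathcal{R}=a\cdot\nabla z+b\,z_t+c\,z$, $|a|,|b|\le Cs$, $|c|\le Cs^2$. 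This does not close at the regularity available here: the natural energy for $z$ is $\|z\|_{L^\infty(0,T;L^2(\Omega))}+\|z_t\|_{L^\infty(0,T;H^{-1}(\Omega))}$, and the terms $a\cdot\nabla z$ and $b\,z_t$ are \emph{not} lower order with respect to that energy --- they live one derivative above it, so they cannot be absorbed by Gronwall. Even if one pretends they can be, the coefficients of size $s$ would produce a constant $e^{CsT}$, i.e.\ exactly the exponential loss you say must be avoided; the phrase ``arranged so as to keep only polynomial powers of $s$'' is doing all the work and no mechanism is given for it.

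The paper's mechanism is a duality argument rather than a forward estimate. One substitutes $z=\rho\widetilde z$ in the transposition identity, which transfers all the commutator terms onto the smooth test function: the extra term is $\intq \rho y\,\widetilde B_z$ with $\widetilde B_z=2\rho^{-1}_t z_t+\rho^{-1}_{tt}z-2\nabla\rho^{-1}\cdot\nabla z-\Delta\rho^{-1}z$, bounded by $Cs^2\|\rho y\|_{L^2(Q)}\big(\|\widetilde z\|_{L^2(Q)}+\|\widetilde z_t\|_{L^2(Q)}+\|\nabla\widetilde z\|_{L^2(Q)}\big)$ --- crucially multiplying the \emph{already known} quantity $\|\rho y\|_{L^2(Q)}$, so the powers of $s$ stay polynomial. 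Then for $g\in\mathcal{D}(Q)$ one takes $\widetilde z$ solving $L\widetilde z=g$ with zero Cauchy data, uses the unweighted a priori bound $\|\widetilde z_t\|_{L^\infty(0,T;L^2(\Omega))}+\|\widetilde z\|_{L^\infty(0,T;H^1_0(\Omega))}+\|\partial_\nu\widetilde z\|_{L^2(\Sigma)}\le C\|g\|_{L^1(0,T;L^2(\Omega))}$, and identifies $\rho y$ as an element of $\big(L^1(0,T;L^2(\Omega))\big)'=L^\infty(0,T;L^2(\Omega))$ with the stated bound; the bound on $(\rho y)_t$ is obtained the same way with $L\widetilde Z=g_t$. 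To repair your proof you should replace the forward energy estimate by this duality step (or, equivalently, rewrite $a\cdot\nabla z+bz_t$ in divergence/time-derivative form and run the transposition against adjoint solutions), since at the $L^2(\Omega)\times H^{-1}(\Omega)$ level the solution itself is only defined by transposition.
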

	
	\begin{remark}
	With no vanishing target $(z_0,z_1)\in \boldsymbol{H}$, the right hand side of \eqref{weighted_estimate_control_traj-frac}
	contains the term $s^{-1/2}   \|\rho(0)  z_0\|_{L^2(\Omega)} + s^{-1/2}   \|\rho(0) z_1\|_{H^{-1}(\Omega)}$. We choose in the sequel  $(z_0,z_1)=(0,0)$ which will make the computations shorter.
\end{remark}
	
\begin{proof} 
Some parts of the proof are only sketched as they are very similar to \cite[Theorem 6]{Bhandari_MCSS_2022} devoted to the case $r=0$ (and $d=1$).
For any $0\le r\le 1$, $r\neq 1/2$ and $z\in P_s$, we have from \eqref{Carleman-in-0-T} :
			$$
			\begin{aligned}
				&\left| <B, z>_{L^2(0,T; H^{-1}(\Omega)),  L^2(0,T;H_0^1(\Omega))} \right|=\left| <B, z>_{L^2(0,T; H^{-r}(\Omega)),  L^2(0,T;H_0^r(\Omega))} \right|  \\
				&\hspace{1cm}\leq\|\rho B\|_{L^2(0,T;H^{-r}(\Omega))}  \|\rho^{-1}z\|_{L^2(0,T;H^{r}(\Omega))}\\
				&\hspace{1cm}\leq C\|\rho B\|_{L^2(0,T;H^{-r}(\Omega))}  \|\rho^{-1} z\|^{1-r}_{L^2(Q)}\|\nabla(\rho^{-1}z)\|^r_{L^2(Q)}\\
				&\hspace{1cm}\leq C\|\rho B\|_{L^2(0,T;H^{-r}(\Omega))} \Big( s^r\|\rho^{-1}z\|_{L^2(Q)}+ \|\rho^{-1}z\|^{1-r}_{L^2(Q)}\|\rho^{-1}\nabla z\|^r_{L^2(Q)}\Big)\\
				&\hspace{1cm}\leq Cs^{r-3/2}\|\rho B\|_{L^2(0,T;H^{-r}(\Omega))} \\
				&\hspace{2cm}\Big( s^{3/2}\|\rho^{-1} z\|_{L^2(Q)}+ \big( s^{3/2} \|\rho^{-1} z\|_{L^2(Q)}\big)^{1-r}\big(s^{1/2}\|\rho^{-1}\nabla z\|_{L^2(Q)}\big)^r\Big)\\
				&\hspace{1cm}\leq  Cs^{r-3/2}\|\rho B\|_{L^2(0,T;H^{-r}(\Omega))} \|z\|_{P_s}
			\end{aligned}$$
			and conclude that the right hand side of \eqref{bilinear equation} is a linear continuous form on $P_s$. 
			The Riesz representation theorem implies the existence of a unique $w\in P_s$ satisfying the formulation \eqref{bilinear equation} and 
					\begin{align}\label{bound-w_s}
			\|w\|_{P_s} \leq C \left(s^{r-3/2} \|\rho B\|_{L^2(0,T;H^{-r}(\Omega))} + s^{-1/2}   \|\rho(0)  u_0\|_{L^2(\Omega)} + s^{-1/2}   \|\rho(0)  u_1\|_{L^2(\Omega)}  \right).
		\end{align}
		Then, set $y= \rho^{-2}Lw$ and $v= s\eta^2\Psi\rho^{-2}\partial_\nu w$. From \eqref{bilinear equation}, the pair $(y, v)$ satisfies 
		\begin{equation}
		\begin{aligned}
		\label{transposition_eq}
			 \intq y L z \dx \dt + \int_{\Sigma}  v \partial_\nu z d\Sigma= & <u_1,\,z(\cdot,0)>_{H^{-1}(\Omega),H_0^1(\Omega)}  - \int_\Omega u_0\, z_t(\cdot,0) \dx \\
			&+ <B, z>_{L^2(0,T; H^{-1}(\Omega)), L^2(0,T;H_0^1(\Omega))}\ \ \forall z \in P_s, 
		\end{aligned}
		\end{equation}
		meaning that $y \in L^2(Q)$ is a solution to \eqref{linear_control_problem} associated with the function $v\in L^2(\Sigma)$ in the sense of transposition. Eventually, using \eqref{bound-w_s}, we get that $\rho y=\rho^{-1} Lw \in L^2(Q)$ and $s^{-1/2}\rho v= s^{1/2} \eta^2\Psi\rho^{-1} \partial_\nu w  \in L^2(\Sigma)$ and deduce \eqref{weighted_estimate_control_traj-frac} for the first and second left hand side terms.
		
		To get \eqref{weighted_estimate_control_traj-frac} for the third and fourth left hand side terms, we remark that  $\rho y \in  L^\infty(0,T;L^2(\Omega))$:  indeed, $y\in\mathcal{C}^0([0,T];L^2(\Omega))\cap \mathcal{C}^1([0,T];H^{-1}(\Omega))$ (see  \cite[Theorem 4.2 p.46]{Lions-1}). 

Moreover, since $\rho^{-1}\in\mathcal{C}^\infty(\overline{Q})$, for any $z\in P_s$, we get 
 \begin{align}\nonumber
	\begin{dcases}
		L(\rho^{-1}z) =\rho^{-1}Lz+\widetilde B_z, \qquad \widetilde B_z:=2\rho^{-1}_tz_t+\rho^{-1}_{tt}z-2\nabla \rho^{-1}\cdot\nabla z-\Delta\rho^{-1}z, \\
		(\rho^{-1}z)_{|\partial\Omega}=0 \\
 	\end{dcases}
\end{align}  
so that \eqref{transposition_eq} rewrites
		\begin{align*}
			&\intq \rho y  L (\rho^{-1} z) \dx \dt + \int_{\Sigma} \rho v  \partial_\nu (\rho^{-1} z) d\Sigma\\
			&=  \langle  \rho(0) u_1, (\rho^{-1} z)(\cdot,0) \rangle_{H^{-1}(\Omega)\times H^1_0(\Omega)} - \int_\Omega \rho(0)  u_0\, (\rho^{-1} z)_t(\cdot,0) \dx  +\int_\Omega \rho(0)  u_0\, \rho^{-1}_ t(0) z (\cdot,0) \dx\\
			&+  \langle  \rho B, \rho^{-1} z \rangle_{L^2(0,T;H^{-1}(\Omega))\times L^2(0,T;H^1_0(\Omega))} +\intq \rho y \widetilde B_z \dx \dt. 
		\end{align*}
Remark that $z\in P_s$ if and only if $\widetilde z:=\rho^{-1} z\in P_s$; therefore, for all $\widetilde  z \in P_s$ and $z=\rho \widetilde z$, 
		\begin{align*}
			&\intq \rho y  L \widetilde  z\dx \dt + \int_{\Sigma} \rho v  \partial_\nu \widetilde  z d\Sigma\\
			&=  \langle  \rho(0) u_1, \widetilde  z(\cdot,0) \rangle_{H^{-1}(\Omega)\times H^1_0(\Omega)} - \int_\Omega \rho(0)  u_0\, \widetilde z_t(\cdot,0) \dx  + \langle  \rho B, \widetilde z\rangle_{L^2(0,T;H^{-1}(\Omega))\times L^2(0,T;H^1_0(\Omega))} \\
			&+\int_\Omega \rho(0)  u_0\, \rho^{-1}_ t(0) z (\cdot,0) \dx+\intq \rho y \widetilde B_z \dx \dt. 
		\end{align*}
Moreover, using that $\vert\nabla \rho^{-1}\vert \leq Cs \rho^{-1}$, $|(\rho^{-1})_t|\leq Cs\rho^{-1}$,  $|(\rho^{-1})_{tt}|\leq Cs^2\rho^{-1}$  and $|\nabla^2 \rho^{-1}|\leq Cs^2\rho^{-1}$, we get the estimates
$$
\left|\int_\Omega \rho(0)  u_0\, \rho^{-1}_ t(0) z (\cdot,0) \dx\right| \le Cs \|\rho(0) u_0\|_{L^2(\Omega)}\| \widetilde  z (\cdot,0)\|_{L^2(\Omega)},
$$
$$
\|\widetilde B_z \|_{L^2(Q)}\le C(s^2\|\widetilde z\|_{L^2(Q)}+s\|\widetilde z_t\|_{L^2(Q)}+s\|\nabla \widetilde z\|_{L^2(Q)})
$$
and thus, since $s\ge 1$ : 
$$\left|\intq \rho y \widetilde B_z \dx \dt\right|\le Cs^2 \|\rho y\|_{L^2(Q)}( \|\widetilde z\|_{L^2(Q)}+\|\widetilde z_t\|_{L^2(Q)}+\|\nabla \widetilde z\|_{L^2(Q)}).
$$
Then, for all $\widetilde  z \in P_s$
\begin{align*}
\left|\intq \rho y  L \widetilde  z\dx \dt\right|
\le &\left| \int_{\Sigma} \rho v  \partial_\nu \widetilde  z d\Sigma\right|+ \left| \langle  \rho(0) u_1, \widetilde  z(\cdot,0) \rangle_{H^{-1}(\Omega)\times H^1_0(\Omega)}\right|+\left| \int_\Omega \rho(0)  u_0\, \widetilde z_t(\cdot,0) \dx \right| \\
			& +\left|\int_\Omega \rho(0)  u_0\, \rho^{-1}_ t(0) z (\cdot,0) \dx\right|+ \left| \langle  \rho B, \widetilde z\rangle_{L^2(0,T;H^{-1}(\Omega))\times L^2(0,T;H^1_0(\Omega))}\right|+\left|\intq \rho y \widetilde B_z\dx \dt\right| \\
\le &\|\rho v\|_{L^2(\Sigma)}\| \partial_\nu \widetilde  z\|_{L^2(\Sigma)}+\|\rho(0) u_1\|_{H^{-1}(\Omega)}\| \widetilde  z(\cdot,0)\|_{H^1_0(\Omega)}\\
&+\|\rho(0) u_0\|_{L^2(\Omega)}\| \widetilde  z_t(\cdot,0)\|_{L^2(\Omega)}+Cs \|\rho(0) u_0\|_{L^2(\Omega)}\| \widetilde  z (\cdot,0)\|_{L^2(\Omega)}\\
&+\|\rho B\|_{L^1(0,T;H^{-1}(\Omega))}\|\widetilde z\|_{L^\infty(0,T;H^1_0(\Omega))}+Cs^2 \|\rho y\|_{L^2(Q)}( \|\widetilde z\|_{L^2(Q)}+\|\widetilde z_t\|_{L^2(Q)}+\|\nabla \widetilde z\|_{L^2(Q)}).
\end{align*}
For any $g\in \mathcal{D}(Q)$, let  $\widetilde  z \in P_s$ be solution of $L\widetilde z =g$ and $(\widetilde z(0),\widetilde z_t(0))=(0,0)$ so that
		$$\| \widetilde z_t \|_{L^\infty(0,T;L^2(\Omega))}+\| \widetilde z \|_{L^\infty(0,T;H^1_0(\Omega))}+\| \partial_\nu \widetilde  z\|_{L^2(\Sigma)}\le C\|g\|_{L^1(0,T;L^2(\Omega))}$$
		and thus, using \eqref{weighted_estimate_control_traj-frac}, for all $0\le r\le 1$, $r\not=1/2$, since $s\ge  1$ : 
		\begin{align*}
\left|\intq \rho y  g\dx \dt\right|
&\le \|\rho v\|_{L^2(\Sigma)}\| \partial_\nu \widetilde  z\|_{L^2(\Sigma)}\\
&+\|\rho B\|_{L^1(0,T;H^{-1}(\Omega))}\|\widetilde z\|_{L^\infty(0,T;H^1_0(\Omega))}+Cs^2 \|\rho y\|_{L^2(Q)}( \|\widetilde z\|_{L^2(Q)}+\|\widetilde z_t\|_{L^2(Q)}+\|\nabla \widetilde z\|_{L^2(Q)})\\
&\le C\left(s^{r+1/2}\|\rho B\|_{L^2(0,T;H^{-r}(\Omega))}+ s^{3/2}  \|\rho(0)  u_0\|_{L^2(\Omega)} + s^{3/2}   \|\rho(0) u_1\|_{H^{-1}(\Omega)}  \right)\|g\|_{L^1(0,T;L^2(\Omega))}.
	\end{align*}
Therefore, $\rho y \in (L^1(0,T;L^2(\Omega))'=L^\infty(0,T;L^2(\Omega))$
and
$$
\|\rho y\|_{L^\infty(0,T;L^2(\Omega))}\le C\left(s^{r+1/2}\|\rho B\|_{L^2(0,T;H^{-r}(\Omega))}+ s^{3/2}  \|\rho(0)  u_0\|_{L^2(\Omega)} + s^{3/2}   \|\rho(0) u_1\|_{H^{-1}(\Omega)}  \right).
$$
Similarly,  for any $g\in \mathcal{D}(Q)$, let $\widetilde  Z \in P_s$ satisfying $L\widetilde Z=g_t$ and $(\widetilde Z(0),\widetilde Z_t(0))=(0,0)$. Then using \cite[(4.19) p.51]{Lions-1}  we obtain :
		\begin{align*}
\left|\intq \rho y  g_t\dx \dt\right|
&\le \|\rho v\|_{L^2(\Sigma)}\| \partial_\nu \widetilde  Z\|_{L^2(\Sigma)}+\|\rho B\|_{L^1(0,T;H^{-1}(\Omega))}\|\widetilde Z\|_{L^\infty(0,T;H^1_0(\Omega))}\\
&+Cs^2 \|\rho y\|_{L^2(Q)}( \|\widetilde Z\|_{L^2(Q)}+\|\widetilde Z_t\|_{L^2(Q)}+\|\nabla \widetilde Z\|_{L^2(Q)})	\\
&\le C\left(s^{1/2+r}\|\rho(s)B\|_{L^2(0,T;H^{-r}(\Omega))}+ s^{3/2}  \|\rho(0)  u_0\|_{L^2(\Omega)} + s^{3/2}   \|\rho(0) u_1\|_{H^{-1}(\Omega)}  \right)\|g\|_{L^1(0,TH^1_0(\Omega))}
	\end{align*}
and thus 
$(\rho y)_t\in L^\infty(0,T;H^{-1}(\Omega))$ and
$$\|(\rho y)_t\|_{L^\infty(0,T;H^{-1}(\Omega))}\le C\left(s^{r+1/2}\|\rho B\|_{L^2(0,T;H^{-r}(\Omega))}+ s^{3/2}  \|\rho(0)  u_0\|_{L^2(\Omega)} + s^{3/2}   \|\rho(0) u_1\|_{H^{-1}(\Omega)}  \right).$$ 
\end{proof}
\begin{remark}\label{remark_J_s}
	The state-control pair $(y,v)$ introduced in \Cref{Lemma_relation_control} is the unique minimizer of the functional 
	\begin{equation}\label{extremal-2}
		J_s(z,u) := s\intq \rho^2 |z|^2 \dx\dt  + \int_{\delta}^{T-\delta}\!\!\!\!\int_{\partial\Omega} \eta^{-2} \Psi^{-1}\rho^{2} |u|^2  \dx\dt
		\end{equation} 
	over 
	$\{ (z,u): z\in L^2(Q), \eta^{-1}\Psi^{-1/2}\rho u\in L^2(\delta,T-\delta; L^2(\partial\Omega)) \text{ solution of }   \eqref{linear_control_problem}   \text{ with } z(\cdot,T)= z_t(\cdot,T)=0 \text{ in }\Omega\}$. We refer to \cite[Section 2]{Munch-Cara-Nicolae}. 
\end{remark}

\begin{remark}\label{coupling_ys_ws}
The controlled state $y= \rho^{-2} L w$ satisfies 
%
$$
\begin{aligned}
		L y = B \text{ in } Q, \qquad y = s \eta^2\Psi \rho^{-2} \partial_\nu w \text{ on } \Sigma, \qquad
	(y(\cdot,0),y_t(\cdot,0)) =(u_0,u_1)\text{ in } \Omega, 
	\end{aligned} 
$$
so that $y\in \C^0([0,T];L^2(\Omega))  \cap \C^1([0,T];H^{-1}(\Omega))$. On the  other hand, the function $w$ satisfies 
$$
\begin{aligned}
	L w = \rho^2 y \text{ in } Q,  \qquad w= 0 \text{ on } \Sigma
\end{aligned}
$$
implying according to \eqref{Carleman-in-0-T} that $(w(\cdot,0),\partial_t w(\cdot,0))\in \boldsymbol{V}$ and 
$w \in   \C^0([0,T]; H^1_0(\Omega))  \cap \C^1([0,T];L^{2}(\Omega))$.
\end{remark}

\subsection{Estimates for the state-control pair with $(u_0,u_1,B)\in \boldsymbol{V}\times L^2(Q)$} \label{Section-regularity-bis}
	
The state-control pair $(y,v)$ given by \Cref{Lemma_relation_control} enjoys additional regularity properties, under additional regularity assumption on the data and introduction of appropriate cut-off function in space and time. Such gain of regularity is known for the wave equation since \cite{DehmanLebeau, ervedoza_zuazua_2010} and more recently \cite{Bhandari_MCSS_2022}. The following first regularity result extends  \cite[Theorem 7]{Bhandari_MCSS_2022} to the multi-dimensional case and is proven in \Cref{sec:appendix}. It will be crucial for the analysis of the semilinear case discussed in \Cref{Controllability_results}.


\begin{prop}\label{Thm-regularity-one}
  For any $x_0\notin\overline{\Omega}$, we assume \eqref{Geometric_Condition} and \eqref{condT}. Let $\eta\in \C_c^1(\mathbb{R})$ and $\Psi\in \mathcal{C}^2(\partial\Omega)$ be cut-off functions satisfying  \eqref{function_Psi} and \eqref{function_eta} respectively. Let  any  $(u_0, u_1)\in \boldsymbol{V}$ and  $B \in  L^2(Q)$  be given. For any $s\geq s_0$, the solution $(y,v)$ of \eqref{linear_control_problem} defined in \Cref{Lemma_relation_control} satisfies 
\begin{equation}
v\in \C^0 ([0,T];H^{1/2}(\partial\Omega)), \quad y\in \C^0([0,T];H^1(\Omega))\cap \C^1([0,T];L^2(\Omega))
\end{equation}
and the following estimate : 
\begin{equation}   
\label{estimate_regular}
\begin{aligned}	
&\|(\rho  y)_t \|_{L^2(Q)} + s^{-1/2} \big\| \frac{1}{ \eta\Psi^{1/2}}(\rho v)_t \big\|_{L^2(\Sigma)}  +s^{-1}\|\nabla (\rho y)\|_{L^2(Q)}+s^{-3/2}\|\rho v\|_{L^\infty(0,T;H^{1/2}(\partial\Omega))}\\
&\hspace{1cm}+s^{-2}(\|\rho  y\|_{\C^0([0,T]; H^1(\Omega))} + \|(\rho y)_t\|_{\C^0([0,T];L^2(\Omega))})\\
&	\hspace{2cm}\leq  C s^{-1/2}
	\left(\|\rho B\|_{L^2(Q)} + \|\rho(0)  u_1\|_{L^2(\Omega)} \right.
	\left.+s   \|\rho(0)  u_0\|_{L^2(\Omega)} +  \|\rho(0) \nabla u_0 \|_{L^2(\Omega)}\right).
	\end{aligned}
\end{equation}
\end{prop}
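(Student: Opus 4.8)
The plan is to upgrade the $L^2(Q)\times L^2(\Sigma)$ estimates of \Cref{Lemma_relation_control} by differentiating the whole construction in time. The key observation is the coupling recalled in \Cref{coupling_ys_ws}: the adjoint state $w$ solves $Lw=\rho^2 y$ with homogeneous Dirichlet data, while the controlled state $y$ solves $Ly=B$ with $y=s\eta^2\Psi\rho^{-2}\partial_\nu w$ on $\Sigma$ and $(y(\cdot,0),y_t(\cdot,0))=(u_0,u_1)$. First I would introduce the time-differentiated weighted variables, typically $\widehat w:=(\rho^{-1}w)_t$ or equivalently work with $\partial_t(\rho y)$ and $\partial_t(\rho v)$, and write down the PDE they satisfy: differentiating $Ly=B$ gives $L(y_t)=B_t$, but since $B$ is only $L^2(Q)$ this cannot be used directly; instead one must keep the weight and exploit that $\rho$, $\rho^{-1}\in\CC^\infty(\overline Q)$ so that commutators of $\partial_t$ with multiplication by $\rho^{\pm 1}$ only cost powers of $s$. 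The natural route is to show that $\partial_t(\rho y)$ is itself (up to lower-order weighted terms controlled by \Cref{Lemma_relation_control}) the controlled trajectory of a linear problem with right-hand side in $L^2(Q)$ and initial data built from $(u_0,u_1,B(\cdot,0))$ — here the assumption $(u_0,u_1)\in\boldsymbol V$ and $B\in L^2(Q)$ is exactly what makes the new initial data lie in $\boldsymbol H$ after one time derivative, so that \Cref{Lemma_relation_control} applies again to the differentiated system.

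Concretely, the steps I would carry out are: (i) apply \Cref{Lemma_relation_control} with $r=0$ to get the base estimates on $\rho y$, $\rho^{-1}v/(\eta\Psi^{1/2})$, and on $\rho y$ in $L^\infty(0,T;L^2(\Omega))$ and $(\rho y)_t$ in $L^\infty(0,T;H^{-1}(\Omega))$; (ii) set up the variational characterization of \Cref{Lemma_relation_control} for the time-derivative problem, i.e. replace $w$ by an auxiliary adjoint variable associated with $Lw=\rho^2 y$ and use the bilinear form $(\cdot,\cdot)_{P,s}$ again, after differentiating the defining identity \eqref{bilinear equation} in time and absorbing the commutator terms $\rho_t^{-1}z_t$, $\rho_{tt}^{-1}z$, $\nabla\rho^{-1}\cdot\nabla z$, $\Delta\rho^{-1}z$ (each bounded by $Cs^k\rho^{-1}$ times lower-order norms of $z$) into the right-hand side using the base estimate of step (i); (iii) invoke \Cref{Thm-Carleman} for the differentiated adjoint state and deduce $\|(\rho y)_t\|_{L^2(Q)}$, $s^{-1/2}\|(\eta\Psi^{1/2})^{-1}(\rho v)_t\|_{L^2(\Sigma)}$ and $s^{-1}\|\nabla(\rho y)\|_{L^2(Q)}$; (iv) run the transposition/duality argument exactly as in the proof of \Cref{Lemma_relation_control} — testing against solutions of $L\widetilde z=g$ and $L\widetilde Z=g_t$ with zero Cauchy data — but now for the differentiated state, to promote the $L^2(Q)$ control of $(\rho y)_t$ and $\nabla(\rho y)$ to the $\C^0([0,T];H^1(\Omega))\cap\C^1([0,T];L^2(\Omega))$ regularity, using the hidden regularity $\partial_\nu\widetilde z\in L^2(\Sigma)$ from \cite[Theorem 4.1]{Lions-1}; (v) for the boundary regularity $v\in\C^0([0,T];H^{1/2}(\partial\Omega))$ with the stated $s^{-3/2}$ weight, combine the interior bound $y\in\C^0([0,T];H^1(\Omega))$ with a trace argument, noting that $v=y|_{\Gamma_0}$ so a $\C^0([0,T];H^1(\Omega))$ bound on $\rho y$ gives a $\C^0([0,T];H^{1/2}(\partial\Omega))$ bound on $\rho v$ after accounting for the extra power of $s$ from differentiating the weight.

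The main obstacle I anticipate is step (ii)–(iii): keeping track of all the commutator terms produced when one differentiates the weighted formulation in time and checking that each of them is genuinely lower order, i.e. that the powers of $s$ they carry are dominated by the $s$-weights already present in the base estimate \eqref{weighted_estimate_control_traj-frac}, so that the bootstrap closes with the clean $s$-scaling announced in \eqref{estimate_regular}. One must also be careful at $t=0$: the differentiated problem has initial data involving $u_1$, $\Delta u_0$ and $B(\cdot,0)$, and since $u_0\in H^1_0(\Omega)$ only (not $H^2$) and $B(\cdot,0)$ is not well-defined for a generic $L^2(Q)$ function, the argument cannot literally differentiate in time but must instead be phrased entirely through the weak/variational formulation and a density argument, which is presumably why the authors defer this proof to an appendix. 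A secondary technical point is justifying the regularity of $w$ needed to make $\partial_\nu w$ and its time derivative meaningful on $\Sigma$, which follows from the elliptic-in-$\rho$ smoothing of \Cref{coupling_ys_ws} together with the improved interior regularity of $y$ obtained along the way, so the argument has a mild bootstrap structure that one should organize carefully to avoid circularity.
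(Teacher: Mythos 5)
Your overall strategy --- difference quotients/time differentiation of the variational identity \eqref{bilinear equation}, a Carleman estimate for the differentiated adjoint state, and a final density argument to handle $(u_0,u_1)\in\boldsymbol{V}$, $B\in L^2(Q)$ --- matches Steps 1, 2 and 4 of the paper's Appendix, and you correctly anticipate both the commutator bookkeeping in powers of $s$ and the reason the argument must first be run for regular data. However, there is a genuine gap in your steps (iii)--(v), concerning how the spatial regularity is obtained. First, time-differentiating the formulation only yields control of $(\rho y)_t$ and $(\rho v)_t$; it does not by itself produce the bound on $\|\nabla(\rho y)\|_{L^2(Q)}$, which lives at the same differentiation order but in space. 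Second, and more seriously, your step (v) runs the logic backwards: you propose to first establish $\rho y\in\C^0([0,T];H^1(\Omega))$ (by a duality argument) and then recover $\rho v\in\C^0([0,T];H^{1/2}(\partial\Omega))$ by taking traces. But $y$ solves a wave equation with \emph{non-homogeneous} Dirichlet datum $v$, so its interior $H^1$-in-space regularity is contingent on already knowing that $v$ is regular ($v\in L^2(\Sigma)$ alone only gives $y\in\C^0([0,T];L^2(\Omega))$); the trace argument cannot be used to bootstrap the boundary regularity out of interior regularity that you do not yet have. The duality argument of step (iv) does not rescue this, because the boundary term $\int_\Sigma \rho v\,\partial_\nu\widetilde z$ appearing in the transposition identity cannot absorb a derivative without extra regularity of either $\rho v$ or $\partial_\nu\widetilde z$.

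The paper resolves this in the opposite order. The regularity of $v$ is read off directly from its explicit formula $v=s\eta^2\Psi\rho^{-2}\partial_\nu w$: since the adjoint state satisfies the \emph{homogeneous} Dirichlet problem $Lw=\rho^2 y$, one controls $\Delta(\rho^{-1}w)$ in $\C^0([0,T];L^2(\Omega))$ through the identity $\Delta(\rho^{-1}w)=(\rho^{-1}w)_{tt}-\widetilde B_w$ together with the Carleman bounds on $w$ and $w_t$, whence $\rho^{-1}w\in\C^0([0,T];H^2(\Omega))$ by elliptic regularity and $\rho v\in\C^0([0,T];H^{1/2}(\partial\Omega))$ by the trace theorem applied to $\rho^{-1}\nabla w$. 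Only then is the energy estimate for the non-homogeneous boundary value problem satisfied by $\rho y$ (with source $\widetilde B_y$, boundary datum $\rho v$ and initial data in $\boldsymbol{V}$) invoked to obtain both $\|\nabla(\rho y)\|_{L^2(Q)}$ and the $\C^0([0,T];H^1(\Omega))\cap\C^1([0,T];L^2(\Omega))$ regularity of $\rho y$. A further technical ingredient your plan omits is the multiplier (Rellich-type) identity with weight $h\rho^{-2}$ used to bound $s\int_\Sigma\Psi\rho^{-2}|\partial_\nu w|^2\,d\Sigma$: this term arises from the time-commutator $(\eta^2\rho^{-2})_t$ in the boundary integral and is not covered by the Carleman estimate because it lacks the cut-off $\eta^2$.
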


\subsection{Estimates for the state-control pair with $(u_0,u_1,B)\in H_0^{1-r}(\Omega)\times H^{-r}(\Omega) \times L^2(0,T; H^{-r}(\Omega))$} \label{Section-regularity-Hr}

		\begin{prop}\label{Interpolation}
		  For any $x_0\notin\overline{\Omega}$, we assume \eqref{Geometric_Condition} and \eqref{condT}. Let $\eta\in \C_c^1(\mathbb{R})$ and $\Psi\in \mathcal{C}^2(\partial\Omega)$ be cut-off functions satisfying  \eqref{function_Psi} and \eqref{function_eta} respectively. For any $s\geq s_0$ and $r\in (0,1)$, $r\neq 1/2$, there exists a constant $C>0$ (depending on $s$ and $r$) such that
		\begin{equation}\label{weighted_estimate_control_traj-Hr}
		\begin{aligned}	
			\|\rho\,y\|_{\mathcal{C}^{1}([0,T];H^{-r}(\Omega)) }&+\|\rho\,y\|_{ \C([0,T];H^{1-r}(\Omega)) } +  \left\|\frac{\rho}{\eta\Psi^{1/2}} v\right\|_{H^{1-r}(0,T;L^2(\partial\Omega)) }\\
			&\leq C \left(  \|\rho B\|_{L^2(0, T;H^{-r}(\Omega)) } +  \|\rho(0)  u_0\|_{H^{1-r}(\Omega)} +   \|\rho( 0) u_1\|_{H^{-r}(\Omega)}  \right).
		\end{aligned}
		\end{equation}
	\end{prop}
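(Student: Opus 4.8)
The plan is to obtain \eqref{weighted_estimate_control_traj-Hr} by interpolation between the two endpoint estimates already proven, namely \eqref{weighted_estimate_control_traj-frac} of \Cref{Lemma_relation_control} (with $r=0$) and \eqref{estimate_regular} of \Cref{Thm-regularity-one}, viewing the assignment $(u_0,u_1,B)\mapsto (y,v)$ as a linear operator and tracking the relevant norms. Concretely, for fixed $s\ge s_0$ consider the map
\[
\mathcal{T}_s : (u_0,u_1,B)\longmapsto \Big(\rho y,\ \tfrac{\rho}{\eta\Psi^{1/2}}v\Big),
\]
where $(y,v)$ is the state-control pair singled out in \Cref{Lemma_relation_control} (equivalently, the minimizer of $J_s$ in \Cref{remark_J_s}); since for fixed data the minimizer is unique and depends linearly on $(u_0,u_1,B)$ (the functional $J_s$ is quadratic and the constraint \eqref{linear_control_problem} is affine), $\mathcal{T}_s$ is a well-defined bounded linear operator. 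From \Cref{Lemma_relation_control} with $r=0$ we have
\[
\mathcal{T}_s:\ L^2(\Omega)\times H^{-1}(\Omega)\times L^2(0,T;L^2(\Omega)) \longrightarrow \big(\C^0([0,T];L^2(\Omega))\cap \C^1([0,T];H^{-1}(\Omega))\big)\times L^2(\Sigma)
\]
boundedly, and from \Cref{Thm-regularity-one},
\[
\mathcal{T}_s:\ H^1_0(\Omega)\times L^2(\Omega)\times L^2(Q) \longrightarrow \big(\C^0([0,T];H^1(\Omega))\cap \C^1([0,T];L^2(\Omega))\big)\times H^1(0,T;L^2(\partial\Omega))
\]
boundedly (note the source spaces in the first two slots differ by one derivative between the two endpoints, while the third slot $B$ lives in the same space $L^2(Q)$ — so no interpolation is needed on $B$, except that the target estimate asks for $B\in L^2(0,T;H^{-r}(\Omega))$, a \emph{weaker} hypothesis, handled by the trivial embedding $L^2(Q)\hookrightarrow L^2(0,T;H^{-r}(\Omega))$ going the wrong way; this must be dealt with by instead interpolating the $B$-slot between $H^{-1}$ in \eqref{weighted_estimate_control_traj-frac} with $r=1$ and $L^2$). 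I would therefore use \eqref{weighted_estimate_control_traj-frac} at its two admissible extreme values $r=0$ and $r=1$ together with \eqref{estimate_regular}, and interpolate each of the three data slots independently.

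The key steps, in order: (i) record the three endpoint mapping properties of $\mathcal{T}_s$ above with their (s-dependent, but here fixed) constants; (ii) identify the interpolation spaces: $[L^2(\Omega),H^1_0(\Omega)]_{1-r}=H^{1-r}_0(\Omega)$ and $[H^{-1}(\Omega),L^2(\Omega)]_{1-r}=H^{-r}(\Omega)$ for the data (the constraint $r\ne 1/2$ is exactly what makes $H^{1-r}_0=H^{1-r}$ and keeps the complex/real interpolation of the Sobolev scale clean), $[L^2(0,T;L^2(\Omega)),L^2(Q)]_\theta = L^2(Q)$ trivially — here instead I interpolate $[L^2(0,T;H^{-1}(\Omega)),L^2(Q)]_{1-r}=L^2(0,T;H^{-r}(\Omega))$ using \eqref{weighted_estimate_control_traj-frac} at $r=1$ and $r=0$ — and on the target side $[\C^0([0,T];L^2(\Omega)),\C^0([0,T];H^1(\Omega))]_{1-r}\hookrightarrow \C^0([0,T];H^{1-r}(\Omega))$, $[\C^1([0,T];H^{-1}(\Omega)),\C^1([0,T];L^2(\Omega))]_{1-r}\hookrightarrow \C^1([0,T];H^{-r}(\Omega))$, and $[L^2(\Sigma),H^1(0,T;L^2(\partial\Omega))]_{1-r}=H^{1-r}(0,T;L^2(\partial\Omega))$ (weighting by $\rho/(\eta\Psi^{1/2})$ is a fixed smooth — on the support of $\eta\Psi$ — and bounded-below factor, so it commutes with interpolation up to $s$-dependent constants); (iii) invoke the bilinear/multilinear interpolation theorem (Lions--Peetre, or simply apply the standard interpolation theorem coordinatewise after splitting the data into its three components) to conclude that $\mathcal{T}_s$ maps $H^{1-r}(\Omega)\times H^{-r}(\Omega)\times L^2(0,T;H^{-r}(\Omega))$ boundedly into $\C^1([0,T];H^{-r}(\Omega)) \cap \C^0([0,T];H^{1-r}(\Omega))$ times $H^{1-r}(0,T;L^2(\partial\Omega))$; (iv) translate the resulting operator-norm bound back into the stated inequality \eqref{weighted_estimate_control_traj-Hr}, absorbing all $s$-powers into the single constant $C=C(s,r)$ as permitted by the statement.

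The main obstacle I anticipate is handling the $\C^0$-in-time (rather than $L^\infty$ or $L^2$) target spaces under interpolation, since the classical Lions--Peetre machinery is cleanest for the $L^p$-Bochner scale; the fix is either to interpolate in $L^\infty(0,T;\cdot)$ first and then recover continuity in time from the equation \eqref{linear_control_problem} itself (the interpolated $y$ solves a wave equation with right-hand side and boundary data one notch better than the $r=1$ endpoint, hence is continuous into the interpolated space by the standard linear hyperbolic regularity of \cite[Theorem 4.2]{Lions-1} applied at the intermediate regularity), or to use the fact that $\C^0([0,T];X)$ is a "admissible" (retract of a suitable) Banach space for real interpolation. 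A secondary delicate point is the $B$-slot: since the target hypothesis on $B$ is $L^2(0,T;H^{-r}(\Omega))$ and not the "interpolated-from-$L^2(Q)$" space, one genuinely needs the $r=1$ version of \eqref{weighted_estimate_control_traj-frac} as the rough endpoint for that variable, and must check that the corresponding target regularity coming from the $r=1$ estimate ($\rho y\in L^2(Q)$, $\rho v\in L^2(\Sigma)$ with the $s^{-1/2}$, $s^{-2}$ weights there) is compatible, coordinate-by-coordinate, with the $L^2(Q)$-endpoint target regularity of \Cref{Thm-regularity-one} — which it is, again after allowing $s$-dependent constants. Everything else is bookkeeping of Sobolev interpolation identities on $\Omega$, $Q$ and $\Sigma$, which I would not spell out in detail.
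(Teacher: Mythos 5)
Your proposal is correct and follows essentially the same route as the paper: the authors also view $(B,u_0,u_1)\mapsto(y,v)$ as a linear operator $\Lambda_s^0$, record its boundedness from $L^2(0,T;H^{-1}(\Omega))\times L^2(\Omega)\times H^{-1}(\Omega)$ (the rough endpoint, i.e.\ \eqref{weighted_estimate_control_traj-frac} with $r=1$, exactly the adjustment of the $B$-slot you identified) and from $L^2(Q)\times H^1_0(\Omega)\times L^2(\Omega)$ via \Cref{Thm-regularity-one}, and then interpolate with $\theta=1-r$. The only difference is that the paper does not dwell on the $\C^0$-in-time interpolation subtlety you flag; your suggested fixes are reasonable ways to fill in that detail.
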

	
\begin{proof}
Let $\Lambda^0_s: (B, u_0, u_1) \mapsto (y,v)$ be the linear operator with $(y,v)$ the control-state pair given by \Cref{Lemma_relation_control}. Then, from \Cref{Lemma_relation_control} and \Cref{Thm-regularity-one},
$$\Lambda^0_s:\ L^2(0, T;H^{-1}(\Omega)) \times L^2(\Omega)\times H^{ -1}(\Omega)\to (\mathcal{C}^0([0,T];L^2(\Omega))\cap \mathcal{C}^1([0,T];H^{-1}(\Omega)))\times L^2(0,T;L^2(\partial\Omega))  $$ 
and
$$\Lambda^0_s:\ L^2(0, T;L^2(\Omega)) \times H^1_0(\Omega)\times L^2(\Omega)\to ( \C^0 ([0,T];H^1 (\Omega))\cap \mathcal{C}^1([0,T];L^2(\Omega)))\times H^1(0,T;L^2(\partial\Omega))  $$
are  linear continuous. By interpolation, for all $0<\theta<1$, 
	 $\Lambda^0_s$ 			is linear  continuous from $L^2(0, T;(H^{-1}(\Omega),L^2(\Omega))_{\theta}) \times (L^2(\Omega),H^1_0(\Omega))_ \theta \times (H^{ -1}(\Omega),L^2(\Omega))_\theta $ to $( \mathcal{C}^0([0,T];(L^2(\Omega)), H^1(\Omega))_\theta)\cap \C^1([0,T];(H^{-1} (\Omega),L^2(\Omega))_\theta)) \times (L^2(0,T;L^2(\partial\Omega)) ,H^1(0,T;L^2(\partial\Omega)))_\theta $.
	 
	 Thus, for $\theta=1-r$, $0<r<1$, $r\not=1/2$,  $\Lambda^0_s$ 	 is linear, continuous from $L^2(0, T;H^{-r}(\Omega)) \times H^{1-r}_0(\Omega)\times H^{ -r}(\Omega)$ to $(\C^0([0,T];H^{1-r}(\Omega))\cap \mathcal{C}^{1}([0,T];H^{-r}(\Omega)))\times H^{1-r}(0,T;L^2(\partial\Omega))  $. 
\end{proof}

\section{Proof of Theorem \ref{main_th} }\label{Controllability_results}

In this section, we prove the controllability of the semilinear equation $\eqref{main_control_problem}$. In this respect,  for all $s\geq s_0$ and for all $\hat{y}$ in an appropriate subset $\C(s)$ of $L^\infty(0,T;L^2(\Omega))$, we consider the linearized boundary control problem
\begin{equation}\label{linearized_model}
Ly  = - f(\yy) \text{ in } Q, \quad y =  v\,1_{\Gamma_0} \text{ on } \Sigma, \quad (y(\cdot,0),y_t(\cdot,0))=(u_0,u_1) \text{ in } \Omega
\end{equation}
such that $(y(\cdot, T), y_t(\cdot, T))=(0,0)$ in $\Omega$. The existence of a controlled trajectory $y\in L^2(Q)$ is guaranteed by  \Cref{Lemma_relation_control} with a source term $B=-f(\widehat{y}) \in L^2(0,T;H^{-1}(\Omega))$.

\subsection{First part of Theorem \ref{main_th}} \label{sec:FixedPointPart1}

 Without restriction, we assume that $f\in \C^0(\mathbb{R})$ satisfies \ref{H1} for some $1< p<3/2$. For any $s\geq s_0\geq 1$, we introduce the class $\C(s)$ defined as the closed convex subset of $L^\infty(0,T; L^2(\Omega)) $  
\begin{equation}\label{closed_convex_set_Schauder}
\C(s): = \Big\{  y\in L^\infty(0,T; L^2(\Omega)) :\   \|\rho y \|_{L^2(Q)} \leq s, \ \|\rho  y \|_{L^\infty(0,T; L^2(\Omega))} \leq s^3\Big\}.
\end{equation}
We prove the existence of $s$ and of a fixed point of the operator
\begin{equation}\label{operator_fixed_point}
\Lambda_s : { \C(s)} \rightarrow { \C(s)} \qquad \widehat{y} \mapsto y
\end{equation}
where $y$ is a solution of the null controllability problem \eqref{linearized_model} associated with the control $v$ given by \Cref{Lemma_relation_control} for $B=-f(\widehat{y})$. 
We employ the Schauder theorem and we prove that : $\boldsymbol{i)}$ for $\beta^\star>0$ small enough, there exists $s\geq s_0$ large enough such that $\C(s)$ is stable under the map $\Lambda_s$ (\Cref{sec-stable-Schauder}); $\boldsymbol{ii)}$ $\Lambda_s(\C(s))$ is a relatively compact subset of $\C(s)$ for the norm $\|.\|_{L^\infty(0,T; L^2(\Omega))}$ (\Cref{Relative_Compactness_Schauder}); $\boldsymbol{iii)}$ $\Lambda_s$ is a continuous map in $\C(s)$ with respect to the $L^\infty(0,T; L^2(\Omega))$ norm (\Cref{Continuity-Schauder}).
This ensures the existence of a fixed point for $\Lambda_s$, which is a controlled trajectory for \eqref{main_control_problem}.

\subsubsection{Estimates  of $ \Lambda_s(\yy)$}

\begin{lemma}\label{estimation_lemma1-H}
Assume that $f\in \C^0(\R)$ satisfies \ref{H1}. For any $s\geq s_0$ and $\yy\in \C(s)$, there exists $C>0$ such that
$$
\Vert \rho f(\widehat y)\Vert_{L^2(0,T; H^{p-3/2}(\Omega))}\leq C \big(\alpha_1 e^{-s}T^{1/2}\vert \Omega\vert^{1/2}+\alpha_2 s+\beta^\star c^p s^{1+p}\big)
$$
with $c=\Vert \phi\Vert_{L^\infty(Q)}$.
\end{lemma}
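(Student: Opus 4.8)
The plan is to estimate $\rho f(\widehat{y})$ by splitting $f$ according to \ref{H1} and treating each resulting piece in $L^2(0,T;H^{p-3/2}(\Omega))$, using that here $1<p<3/2$, so $p-3/2\in(-1/2,0)$ and in particular $L^2(\Omega)\hookrightarrow H^{p-3/2}(\Omega)$ continuously. Since $\rho>0$, \ref{H1} yields the pointwise bound
\[
\rho\,|f(\widehat{y})|\le \alpha_1\,\rho+\alpha_2\,\rho\,|\widehat{y}|+\beta^\star\,\rho\,|\widehat{y}|\,\ln_+^p|\widehat{y}|\qquad\text{in }Q .
\]
The first term is controlled using $\rho\le e^{-s}$: $\|\alpha_1\rho\|_{L^2(0,T;H^{p-3/2}(\Omega))}\le C\,\alpha_1\|\rho\|_{L^2(Q)}\le C\,\alpha_1 e^{-s}T^{1/2}|\Omega|^{1/2}$; the second is controlled using $\widehat{y}\in\C(s)$: $\|\alpha_2\rho|\widehat{y}|\|_{L^2(0,T;H^{p-3/2}(\Omega))}\le C\,\alpha_2\|\rho\widehat{y}\|_{L^2(Q)}\le C\,\alpha_2 s$. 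It remains to handle the superlinear term.

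First I would move the Carleman weight out of the logarithm. Writing $g:=\rho\widehat{y}$, so that $|\widehat{y}|=e^{s\phi}|g|$ with $0<\phi\le c$, one has $\ln_+|\widehat{y}|\le cs+\ln_+|g|$, hence (by convexity, since $p\ge1$)
\[
\rho\,|\widehat{y}|\,\ln_+^p|\widehat{y}|=|g|\,\big(\ln_+|\widehat{y}|\big)^p\le 2^{p-1}(cs)^p\,|g|+2^{p-1}\,|g|\,\ln_+^p|g| .
\]
The first summand contributes $\le C\,(cs)^p\|\rho\widehat{y}\|_{L^2(Q)}\le C\,c^p s^{p+1}$, as above. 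For the second summand I would establish the following borderline inequality: there exists $q\in(1,2)$ with $L^q(\Omega)\hookrightarrow H^{p-3/2}(\Omega)$ — equivalently $H^{3/2-p}(\Omega)\hookrightarrow L^{q'}(\Omega)$, which is admissible because $0<3/2-p<1/2$ — such that for every $h\in L^2(\Omega)$,
\[
\big\|\,|h|\,\ln_+^p|h|\,\big\|_{L^q(\Omega)}\le C\,\|h\|_{L^2(\Omega)}\,\big(1+\ln_+^p\|h\|_{L^2(\Omega)}\big).
\]
I would prove this by splitting $\Omega$ at the level $M^2$ with $M:=\|h\|_{L^2(\Omega)}$: on $\{|h|\le M^2\}$ one uses $\ln_+|h|\le 2\ln_+M$, whereas on $\{|h|>M^2\}$ one uses $\ln_+^{pq}(a)\le\gamma^{-pq}a^{\gamma pq}$ with $\gamma:=\frac{2-q}{2pq}>0$, so that $|h|^q\ln_+^{pq}|h|\le\gamma^{-pq}|h|^{(q+2)/2}\le\gamma^{-pq}M^{q-2}|h|^2$ there, whence $\int_{\{|h|>M^2\}}|h|^q\ln_+^{pq}|h|\le\gamma^{-pq}M^q$.

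Finally I would apply this with $h=g(\cdot,t)$. Since $\|g(\cdot,t)\|_{L^2(\Omega)}\le\|\rho\widehat{y}\|_{L^\infty(0,T;L^2(\Omega))}\le s^3$ and $s\ge1$, this gives $\||g(\cdot,t)|\ln_+^p|g(\cdot,t)|\|_{L^q(\Omega)}\le C(1+\ln^p s)\|g(\cdot,t)\|_{L^2(\Omega)}$, and integrating in $t$ with $\|g\|_{L^2(Q)}\le s$ yields $\||g|\ln_+^p|g|\|_{L^2(0,T;L^q(\Omega))}\le C(1+\ln^p s)s\le C\,s^{p+1}\le C\,c^p s^{p+1}$ (using $\ln s\le s$ for $s\ge1$, and $c>1$ since $\phi=e^{\lambda\psi}>1$ in $\overline Q$). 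Adding the four contributions, together with $L^q(\Omega)\hookrightarrow H^{p-3/2}(\Omega)$ for the superlinear term, gives the announced estimate. I expect the crux to be the borderline $L^q$ inequality of the third paragraph: one must not pay more than $s^{p+1}$ for the region where $|\widehat{y}|$ is large, and this works only because $q$ can be taken strictly below $2$ — which is possible precisely because $H^{p-3/2}(\Omega)$ is a negative-order space, i.e. because $p>1$ — and because the split is made at the level $\|h\|_{L^2(\Omega)}^2$ rather than at a fixed threshold.
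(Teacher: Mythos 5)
Your proof is correct, and its overall architecture coincides with the paper's: the same three-way decomposition of $\rho f(\widehat y)$ via \ref{H1}, the same embedding $L^{q}(\Omega)\hookrightarrow H^{p-3/2}(\Omega)$ for some $q<2$ (the paper takes $q=p^\star=\tfrac{2d}{d+3-2p}$ explicitly), the same extraction of $(cs)^p$ from $\ln_+^p|\widehat y|\le C\big((cs)^p+\ln_+^p|\rho\widehat y|\big)$, and the same reliance on both constraints defining $\C(s)$. Where you genuinely diverge is in the treatment of the residual term $\||g|\ln_+^p|g|\|_{L^2(0,T;L^{q}(\Omega))}$ with $g=\rho\widehat y$: the paper simply absorbs the logarithm into a small power, $a\ln_+^p a\le C_\varepsilon a^{1+\varepsilon}$ with $\varepsilon<\min\big(\tfrac p3,\tfrac{3-2p}{d}\big)$, and then applies H\"older between $L^2(Q)$ and $L^\infty(0,T;L^2(\Omega))$ to get $\|g\|_{L^2(Q)}\|g\|_{L^\infty(0,T;L^2)}^{\varepsilon}\le s\cdot s^{3\varepsilon}\le s^{1+p}$; you instead prove a log-H\"older inequality $\||h|\ln_+^p|h|\|_{L^q(\Omega)}\le C\|h\|_{L^2(\Omega)}(1+\ln_+^p\|h\|_{L^2(\Omega)})$ by splitting $\Omega$ at the level $\|h\|_{L^2}^2$, which yields the sharper bound $Cs(1+\ln^p s)$ before being crudely relaxed to $Cc^ps^{1+p}$. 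Your lemma is slightly more delicate to state but gives a strictly better intermediate estimate and makes transparent that only the $L^2(Q)$ bound enters at full strength while the $L^\infty(0,T;L^2)$ bound enters only logarithmically; the paper's route is shorter and needs nothing beyond the elementary pointwise bound plus interpolation, at the cost of consuming the full $s^{3\varepsilon}$ margin (hence the constraint $\varepsilon\le p/3$). Both choices of $\varepsilon$ or $q$ are admissible precisely because $p>1$ makes $H^{p-3/2}(\Omega)$ a negative-order space, as you correctly identify.
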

\begin{proof} We infer, for $1/2>r= 3/2-p> 0$ and  $1\le p^\star=\frac{2d}{d+3-2p}<2$ that (using that $\rho\leq e^{-s}$)
$$
\begin{aligned}
\Vert \rho f(\widehat y)\Vert_{L^2(0,T; H^{-r}(\Omega))}&\leq C  \Vert \rho \alpha_1 + |\rho \widehat y|\left( \alpha_2 + \beta^\star\ln_+^{p}(\widehat y) \right)\Vert_{L^2(0,T; L^{p^\star}(\Omega))}\\
& \leq C (\alpha_1 e^{-s}\sqrt{T}\vert \Omega\vert^{1/2}+\alpha_2\Vert \rho \widehat y\Vert_{L^2(Q)}+\beta^\star  \Vert \rho \widehat y \ln_+^p(\widehat y)\Vert_{L^2(0,T; L^{p^\star}(\Omega))}.
\end{aligned}
$$
Now, using that $\rho^{-1}\leq e^{cs}$ and $0\le \ln_+^p(\widehat y)\le C(\ln_+^p \rho^{-1}+ \ln_+^{p}(\rho \widehat y))\le  C((cs)^p+ \ln_+^p(\rho \widehat y))$, we get that
$$
\Vert \rho \widehat y \ln_+^{p}(\widehat y)\Vert_{L^2(0,T;L^{p^\star}(\Omega))}\leq C\big((cs)^p\Vert \rho \widehat y\Vert_{L^2(Q)}+ \Vert\rho \widehat y\ln_+^{p}(\rho \widehat y)\Vert_{L^2(0,T;L^{p^\star}(\Omega))}\big).
$$
But, for all $0<\varepsilon<\frac{2-p^\star}{p^\star}=\frac{2r}{d}=\frac{3-2p}{d}$, there exists $C_\varepsilon>0$ such that
\begin{equation}
\begin{aligned}\label{estim-L2H1}
\|\rho \widehat y \ln^p_+ (\rho \widehat y)\|_{L^2(0,T;L^{p^\star}(\Omega))}
&\le C_\varepsilon \| |\rho \widehat y|^{1+\varepsilon}\|_{L^2(0,T;L^{p^\star}(\Omega))}
\le C_\varepsilon   \| \rho \widehat y \|_{L^2(Q)}\| \rho \widehat y \|_{L^\infty(0,T;L^2(\Omega))}^{\varepsilon}. 
\end{aligned}
\end{equation}
Combining the above inequalities with $\varepsilon<\min(\frac p3,\frac{3-2p}{d})$ and using that  $\yy\in \C(s)$ we get the result. 
\end{proof}

\begin{prop}\label{Proposition-linear_to_nonlinear_Schauder}
For any $x_0\notin\overline{\Omega}$, we assume \eqref{Geometric_Condition} and \eqref{condT}.
	Assume $(u_0,u_1)\in \boldsymbol{H}$ and that $f\in \CC^0(\mathbb{R})$ satisfies \ref{H1}.  For  $s\geq s_0$ and any $\yy\in \C(s)$, the solution $y=\Lambda_s(\yy)$ to the linearized controlled system \eqref{linearized_model} with control $v$ satisfies the following estimates:
\begin{multline}\label{linearized-estimate-Schauder-1}
			\|\rho y \|_{L^2(Q)} + s^{-1/2} \biggl\| \frac{\rho}{ \Psi^{1/2}\eta}v \biggr\|_{L^2(\Sigma)} +s^{-2}\|\rho y\|_{L^\infty(0,T;L^2(\Omega))}+s^{-2}\|(\rho y)_t\|_{L^\infty(0,T;H^{-1}(\Omega))}\\
		\le C s\Big(  s^{-p}\alpha_2  + \beta^\star c^{p}+ e^{-s}\big(s^{-p-1}   \alpha_1 T^{1/2}|\Omega|^{1/2}  + s^{-3/2}  (\|u_0\|_{L^2(\Omega)} + s\|u_1\|_{H^{-1}(\Omega)})  \big)\Big). 	
\end{multline}
\end{prop}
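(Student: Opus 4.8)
The plan is to combine the weighted control-state estimate from \Cref{Lemma_relation_control} with the source-term bound from \Cref{estimation_lemma1-H}. Since $y = \Lambda_s(\yy)$ is exactly the controlled trajectory produced by \Cref{Lemma_relation_control} with $B = -f(\yy)$, the left-hand side of \eqref{linearized-estimate-Schauder-1} is precisely the left-hand side of \eqref{weighted_estimate_control_traj-frac} (the roles of $y$, $v$ match, and the target is zero). So I would first apply \eqref{weighted_estimate_control_traj-frac} with the choice $r = 3/2 - p$, which lies in $(0,1/2)$ since $1 < p < 3/2$, hence in particular $r \neq 1/2$ and the proposition applies. This yields
\begin{align*}
&\|\rho y \|_{L^2(Q)} + s^{-1/2} \Bigl\| \tfrac{\rho}{\Psi^{1/2}\eta} v \Bigr\|_{L^2(\Sigma)} + s^{-2}\|\rho y\|_{L^\infty(0,T;L^2(\Omega))} + s^{-2}\|(\rho y)_t\|_{L^\infty(0,T;H^{-1}(\Omega))} \\
&\qquad \le C_r\Bigl( s^{r-3/2}\|\rho f(\yy)\|_{L^2(0,T;H^{-r}(\Omega))} + s^{-1/2}\|\rho(0) u_0\|_{L^2(\Omega)} + s^{-1/2}\|\rho(0) u_1\|_{H^{-1}(\Omega)} \Bigr).
\end{align*}

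Next I would insert the estimate of \Cref{estimation_lemma1-H}, noting that $H^{-r}(\Omega) = H^{p-3/2}(\Omega)$ with our choice $r = 3/2-p$, so that
\[
s^{r-3/2}\|\rho f(\yy)\|_{L^2(0,T;H^{-r}(\Omega))} \le C s^{-p}\bigl(\alpha_1 e^{-s} T^{1/2}|\Omega|^{1/2} + \alpha_2 s + \beta^\star c^p s^{1+p}\bigr).
\]
Distributing the factor $s^{-p}$ gives the three contributions $C\alpha_1 e^{-s}T^{1/2}|\Omega|^{1/2} s^{-p}$, $C\alpha_2 s^{1-p}$, and $C\beta^\star c^p s$. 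Together with the two data terms $s^{-1/2}\|\rho(0)u_0\|_{L^2(\Omega)}$ and $s^{-1/2}\|\rho(0)u_1\|_{H^{-1}(\Omega)}$, and using $\rho(0) \le e^{-s}$ pointwise (so $\|\rho(0)u_0\|_{L^2(\Omega)} \le e^{-s}\|u_0\|_{L^2(\Omega)}$ and likewise for $u_1$), I would collect everything and factor out a single power $s$ on the right, which produces exactly the claimed form $C s\big( s^{-p}\alpha_2 + \beta^\star c^p + e^{-s}(s^{-p-1}\alpha_1 T^{1/2}|\Omega|^{1/2} + s^{-3/2}(\|u_0\|_{L^2(\Omega)} + s\|u_1\|_{H^{-1}(\Omega)}))\big)$. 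Indeed $C\alpha_2 s^{1-p} = Cs \cdot s^{-p}\alpha_2$, $C\beta^\star c^p s = Cs\cdot\beta^\star c^p$, $C\alpha_1 e^{-s}T^{1/2}|\Omega|^{1/2}s^{-p} = Cs \cdot e^{-s}s^{-p-1}\alpha_1 T^{1/2}|\Omega|^{1/2}$, and $s^{-1/2}e^{-s}\|u_0\|_{L^2(\Omega)} = Cs\cdot e^{-s}s^{-3/2}\|u_0\|_{L^2(\Omega)}$ with the $u_1$ term carrying the extra factor $s$ as written.

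There is essentially no obstacle here: this proposition is a bookkeeping corollary that chains two already-established estimates with the right choice of interpolation exponent and a uniform rescaling of powers of $s$. The only point requiring a small amount of care is checking that $r = 3/2 - p$ genuinely satisfies the hypotheses of \Cref{Lemma_relation_control} — namely $r \in [0,1]$ and $r \neq 1/2$ — which holds for $p \in (1, 3/2)$ as assumed at the start of \Cref{sec:FixedPointPart1}, and that the constant $C$ depends only on $\Omega$ and $T$ (the $C_r$ from \Cref{Lemma_relation_control} is harmless since $r$ is fixed once $p$ is). One should also keep track of the fact that $\rho(0) = e^{-s\phi(\cdot,0)} \le e^{-s}$ since $\phi \ge 1$ on $\overline{Q}$ (equivalently $\psi \ge 1/\lambda$, which is ensured by the choice of $M_0$), so the initial-data terms indeed pick up the exponential factor $e^{-s}$ displayed in the statement.
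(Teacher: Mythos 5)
Your route is exactly the paper's: apply \eqref{weighted_estimate_control_traj-frac} from \Cref{Lemma_relation_control} with $r=3/2-p\in(0,1/2)$, insert the bound of \Cref{estimation_lemma1-H} for $\|\rho f(\yy)\|_{L^2(0,T;H^{p-3/2}(\Omega))}$, and redistribute powers of $s$. The bookkeeping for the $B$-term and the $u_0$-term is correct.

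The one step that does not survive scrutiny is your treatment of the $u_1$ term. You bound $\|\rho(0)u_1\|_{H^{-1}(\Omega)}$ ``likewise'' to the $u_0$ case using the pointwise inequality $\rho(0)\le e^{-s}$, but $u_1$ is only a distribution in $H^{-1}(\Omega)$ and a pointwise bound on the multiplier does not control the $H^{-1}$ norm of the product: by duality one must estimate $\sup_{\|\varphi\|_{H^1_0}=1}\|\rho(0)\varphi\|_{H^1_0}$, and the gradient of $\rho(0)=e^{-s\phi(\cdot,0)}$ contributes a factor $s$. The paper handles this by the bilinear continuity of $(\sigma,u)\mapsto \sigma u$ from $\C^1(\overline{\Omega})\times H^{-1}(\Omega)$ to $H^{-1}(\Omega)$, giving $\|\rho(0)u_1\|_{H^{-1}(\Omega)}\le C\|\rho(0)\|_{\C^1(\overline{\Omega})}\|u_1\|_{H^{-1}(\Omega)}\le C s e^{-s}\|u_1\|_{H^{-1}(\Omega)}$. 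This is precisely the origin of the extra factor $s$ multiplying $\|u_1\|_{H^{-1}(\Omega)}$ in \eqref{linearized-estimate-Schauder-1}, a factor you noticed ``as written'' but left unexplained. Once this multiplier estimate is substituted, the rest of your assembly goes through unchanged.
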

	
\begin{proof}
The map $L: \C^1(\overline{\Omega})\times  H^{-1}(\Omega)\to  H^{-1}(\Omega)$,  $(\rho(0) ,u_1)\to \rho(0) u_1$ is bilinear continuous (see \cite[Lemma 3, p. 1097]{J.Simon-NonHomo-NS}) so that $\|\rho(0)u_1\|_{H^{-1}(\Omega)}\le C\|\rho(0)\|_{ \C^1(\overline{\Omega})}\|u_1\|_{H^{-1}(\Omega)}\le C se^{-s}\|u_1\|_{H^{-1}(\Omega)}$. 
The estimates follow from \Cref{Lemma_relation_control} with $r=3/2-p$ and  \Cref{Thm-regularity-one} with $B=-f(\widehat{y})$ and \Cref{estimation_lemma1-H}.
\end{proof}


\subsubsection{Stability of the class $\C(s)$}\label{sec-stable-Schauder}

\begin{lemma}\label{Stability_class_Schauder}
	Assume that $f\in \C^0(\mathbb{R})$ satisfies \ref{H1} 
	with $\beta^\star$ small enough. Then, there exists $s\geq s_0$ large enough such that
	$\Lambda_s\big(\C(s) \big) \subset \C(s)$.
	\end{lemma}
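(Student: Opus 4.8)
The plan is to show that the right-hand side of the estimate \eqref{linearized-estimate-Schauder-1} for $y=\Lambda_s(\yy)$ can be made to fit inside the two constraints defining $\C(s)$, namely $\Vert\rho y\Vert_{L^2(Q)}\le s$ and $\Vert\rho y\Vert_{L^\infty(0,T;L^2(\Omega))}\le s^3$, provided $\beta^\star$ is first chosen small and then $s$ is chosen large. First I would invoke \Cref{Proposition-linear_to_nonlinear_Schauder}: since $\yy\in\C(s)$, the bound \eqref{linearized-estimate-Schauder-1} gives in particular
\begin{equation*}
\Vert\rho y\Vert_{L^2(Q)}\le C s\Big(s^{-p}\alpha_2+\beta^\star c^p+e^{-s}\big(s^{-p-1}\alpha_1 T^{1/2}\vert\Omega\vert^{1/2}+s^{-3/2}(\Vert u_0\Vert_{L^2(\Omega)}+s\Vert u_1\Vert_{H^{-1}(\Omega)})\big)\Big),
\end{equation*}
and similarly $\Vert\rho y\Vert_{L^\infty(0,T;L^2(\Omega))}$ is bounded by $s^2$ times the same bracket.

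The key observation is a separation of scales. In the bracket, every term except $\beta^\star c^p$ carries a negative power of $s$ (using $p>0$, so $s^{-p}\to 0$, together with the exponentially small factor $e^{-s}$ on the data terms). Thus, writing the bracket as $\beta^\star c^p + o(1)$ as $s\to\infty$, we have $\Vert\rho y\Vert_{L^2(Q)}\le Cs(\beta^\star c^p + o(1))$ and $\Vert\rho y\Vert_{L^\infty(0,T;L^2(\Omega))}\le Cs^2(\beta^\star c^p+o(1))$. For the $L^2(Q)$ constraint we need $Cs(\beta^\star c^p+o(1))\le s$, i.e. $C(\beta^\star c^p+o(1))\le 1$; for the $L^\infty(0,T;L^2(\Omega))$ constraint we need $Cs^2(\beta^\star c^p+o(1))\le s^3$, i.e. $C(\beta^\star c^p+o(1))\le s$, which is weaker. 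So everything reduces to making $C\beta^\star c^p$ strictly less than $1$ (say $\le 1/2$) by taking $\beta^\star$ small enough — note $C$ and $c=\Vert\phi\Vert_{L^\infty(Q)}$ depend only on $\Omega,T$ and the fixed cut-off data, not on $s$ or $\beta^\star$ — and then, with $\beta^\star$ frozen, choosing $s=s(\beta^\star,\Vert(u_0,u_1)\Vert_{\boldsymbol H})\ge s_0$ large enough that the remaining $o(1)$ terms are absorbed, i.e. $C\,o(1)\le 1/2$ as well. This yields $\Vert\rho y\Vert_{L^2(Q)}\le s$ and, a fortiori (since then $C(\beta^\star c^p+o(1))\le 1\le s$), $\Vert\rho y\Vert_{L^\infty(0,T;L^2(\Omega))}\le s^3$, hence $y\in\C(s)$.

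The only real subtlety, which I would state carefully, is the order of quantifiers: $\beta^\star$ must be fixed first (from the $s$-independent constants $C$, $c$), and only afterwards is $s$ chosen large — and $s$ is allowed to depend on $\beta^\star$ and on the norms $\Vert u_0\Vert_{L^2(\Omega)}$, $\Vert u_1\Vert_{H^{-1}(\Omega)}$ of the data, as the statement of \Cref{main_th} explicitly permits. There is no genuine analytic obstacle here; the content is entirely in \Cref{Proposition-linear_to_nonlinear_Schauder} (hence ultimately in \Cref{Lemma_relation_control}, \Cref{Thm-regularity-one} and the growth estimate \Cref{estimation_lemma1-H}), and this lemma is just the bookkeeping step that turns those weighted estimates into invariance of $\C(s)$.
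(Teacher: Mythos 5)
Your argument is correct and coincides with the paper's own proof: both rest entirely on the estimate \eqref{linearized-estimate-Schauder-1} of \Cref{Proposition-linear_to_nonlinear_Schauder}, observing that as $s\to+\infty$ every term in the bracket except $\beta^\star c^p$ vanishes (the paper phrases this via $\limsup_{s\to+\infty} s^{-1}\Vert\rho y\Vert_{L^2(Q)}\le \beta^\star C c^p$ and $\limsup_{s\to+\infty} s^{-3}\Vert\rho y\Vert_{L^\infty(0,T;L^2(\Omega))}\le \beta^\star C c^p$), so that fixing $\beta^\star$ with $C\beta^\star c^p<1$ and then taking $s$ large gives the invariance. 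Your explicit discussion of the order of quantifiers and of the dependence of $s$ on the data norms is a welcome clarification but not a different method.
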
 

\begin{proof}
For  any $s\geq s_0$ and $\yy\in \mathcal{C}(s)$, let $y:=\Lambda_s(\yy)$. From \eqref{linearized-estimate-Schauder-1}, we obtain that $\limsup_{s\to+\infty} s^{-1}\|\rho y \|_{L^2(Q)}\le \beta^\star C   c^p$ 
and 
$\limsup_{s\to+\infty} s^{-3}\|\rho y \|_{L^\infty(0,T;L^2(\Omega))}\le \beta^\star C   c^p$. Therefore, if $\beta^\star>0$ is small enough so that $\beta^\star C   c^p <1$, then for any $s\geq s_0$ large enough, $y\in \mathcal{C}(s)$.
\end{proof}

\begin{remark}
The lower bound for $s\geq s_0$ is related to the norm of the initial data $(u_0,u_1)\in \boldsymbol{H}$: in view of \Cref{Proposition-linear_to_nonlinear_Schauder}, the stability of $\Lambda_s$
requires that $(C   \| u_0\|_{L^2(\Omega)} +Cs^{-1}( \alpha_1 T^{1/2}|\Omega|^{1/2} + \| u_1 \|_{H^{-1}(\Omega)})) 
e^{-s}<s^{3/2}$. Therefore, the lower bound for $s$ much be chosen as depending logarithmically on $\Vert u_0,u_1\Vert_{\boldsymbol{H}}$.
\end{remark}

\subsubsection{Relative compactness of the set $\Lambda_s(\C(s))$}\label{Relative_Compactness_Schauder}

\begin{lemma}\label{RelativeCompactness}
Under the assumptions of Lemma \ref{Stability_class_Schauder}, $\Lambda_s(\C(s))$ is a relatively compact subset of $\C(s)$ for the $L^\infty(0,T;L^2(\Omega))$ norm.
\end{lemma}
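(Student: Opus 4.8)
The goal is to show that $\Lambda_s(\C(s))$ is relatively compact in $L^\infty(0,T;L^2(\Omega))$. The natural strategy is to improve the regularity of the image $y=\Lambda_s(\yy)$ enough that a compact Sobolev embedding applies, uniformly in $\yy\in\C(s)$. Concretely, I would argue that although $(u_0,u_1)\in\boldsymbol{H}$ only, the controlled trajectory $y$ produced by \Cref{Lemma_relation_control} gains space-time regularity because the source term $B=-f(\yy)$ lies in a better space than $L^2(0,T;H^{-1}(\Omega))$ — in fact $\rho f(\yy)\in L^2(0,T;H^{p-3/2}(\Omega))$ with $p-3/2>-1$ by \Cref{estimation_lemma1-H}. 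Coupled with the interpolated estimate \eqref{weighted_estimate_control_traj-Hr} of \Cref{Interpolation} (with $r=3/2-p\in(0,1/2)$), this gives $\rho y\in\C^1([0,T];H^{-r}(\Omega))\cap\C([0,T];H^{1-r}(\Omega))$, with the corresponding norm bounded by a constant depending only on $s$ (and the data), uniformly over $\yy\in\C(s)$ — the bound uses precisely that $\|\rho B\|_{L^2(0,T;H^{-r}(\Omega))}$, $\|\rho(0)u_0\|_{H^{1-r}(\Omega)}=\|\rho(0)u_0\|_{L^2(\Omega)}$ (note $1-r=p-1/2<1$, but we only need it $\le$ what the data affords; more carefully one takes $r$ slightly larger so that $1-r\le 0$, or simply works in $H^{\min(1-r,0)}$), and $\|\rho(0)u_1\|_{H^{-r}(\Omega)}$ are all finite and $s$-dependent but $\yy$-independent.

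Once such a uniform bound is in hand, relative compactness follows from the classical Aubin--Lions--Simon lemma: the set $\{\rho y : \yy\in\C(s)\}$ is bounded in $\C([0,T];H^{1-r}(\Omega))$ and its time-derivatives are bounded in $\C([0,T];H^{-r}(\Omega))$, hence it is relatively compact in $\C([0,T];H^{\sigma}(\Omega))$ for any $\sigma<1-r$, in particular (choosing $\sigma=0$ if $1-r>0$, which holds once $p>1/2$, and we may assume $1<p<3/2$ as done at the start of \Cref{sec:FixedPointPart1}) relatively compact in $\C([0,T];L^2(\Omega))$. Since $\rho^{-1}\in\C^\infty(\overline Q)$ and multiplication by the fixed smooth function $\rho^{-1}$ is a homeomorphism of $\C([0,T];L^2(\Omega))$, it follows that $\{y=\Lambda_s(\yy):\yy\in\C(s)\}$ is relatively compact in $\C([0,T];L^2(\Omega))$, and a fortiori in $L^\infty(0,T;L^2(\Omega))$. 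Finally, since $\C(s)$ is closed in $L^\infty(0,T;L^2(\Omega))$ and (by \Cref{Stability_class_Schauder}) $\Lambda_s(\C(s))\subset\C(s)$, the closure of $\Lambda_s(\C(s))$ is a compact subset of $\C(s)$.

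The main obstacle is the bookkeeping with the exponent $r=3/2-p$ and the compatibility of the interpolation spaces: \Cref{Interpolation} is stated for $r\in(0,1)$, $r\ne1/2$, so one must check that the chosen $p\in(1,3/2)$ gives an admissible $r\in(0,1/2)$, and one must handle the boundary space $H^{1-r}_0(\Omega)$ versus $H^{1-r}(\Omega)$ for the initial datum $u_0\in L^2(\Omega)$ — here $1-r=p-1/2\in(1/2,1)$, so $u_0\in L^2(\Omega)$ does not sit in $H^{1-r}(\Omega)$; the resolution is that we do not need $y$ itself to be that regular, only that $y$ minus a fixed reference trajectory (e.g. the free evolution) is, or equivalently one re-runs the interpolation keeping $u_0,u_1$ in $\boldsymbol H$ and only upgrading the $B$-regularity, which still yields $\rho y\in\C([0,T];H^{1-r'}(\Omega))$ for some $r'>0$ by interpolating \Cref{Lemma_relation_control} between $r=3/2-p$ and $r=1$. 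That intermediate-regularity gain on $y$ coming solely from the improved source term — independently of the low regularity of the initial/final data — is the crux, and it is exactly the ``crucial technical point'' on state-control trajectories advertised in the introduction; everything else is a routine application of Aubin--Lions and the smoothness of $\rho$.
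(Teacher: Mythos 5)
Your proof is correct and follows essentially the same route as the paper: a uniform bound on $\Vert\rho f(\yy)\Vert_{L^2(0,T;H^{-r}(\Omega))}$ from \Cref{estimation_lemma1-H}, the interpolated regularity of \Cref{Interpolation} with $r=3/2-p$ applied to a difference of trajectories so as to kill the $\boldsymbol{H}$-only initial data, and the compact embedding $\C^1([0,T];H^{-r}(\Omega))\cap\C^0([0,T];H^{1-r}(\Omega))\hookrightarrow L^\infty(0,T;L^2(\Omega))$ (Aubin--Lions--Simon). The only cosmetic difference is that the paper subtracts a fixed element $y_0=\Lambda_s(\yy_0)$ of the image rather than the trajectory carrying the initial data, which by linearity of the underlying operator $\Lambda_s^0$ is the same device; your self-correction about $u_0\notin H^{1-r}(\Omega)$ is precisely the point the paper's proof is built around.
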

\begin{proof}
Let $(y_n)_{\in\N}$ be a sequence of $\Lambda_s(\C(s))$.  Then, there exists  $(\widehat y_n)_{\in\N}$, sequence of $ \C(s)$ such that, for all $n\in\N$, $y_n=\Lambda_s(\widehat y_n)$.
Remark that the sequence $(y_n)_{\in\N}$ does not enjoy enough regularity to use classical compactness results. However, for all $n\in\N$, $y_n-y_0$ is solution of \eqref{bilinear equation} for $B=f(\yy_n)-f(\yy_0 )$, $u_0=u_1=0$ and thus, we deduce from Proposition \ref{Interpolation}, for $r=3/2-p$ that $\rho(y_n-y_0)\in  \mathcal{C}^{1}(0,T;H^{-r}(\Omega))\cap \C^0([0,T];H^{1-r}(\Omega)))$, $\rho(v_n-v_0)\in  H^{1-r}(0,T;L^2(\partial\Omega))  $
and 
$$		\begin{aligned}	
			\|\rho (y_n-y_0)\|_{\mathcal{C}^{1}(0,T;H^{-r}(\Omega)) }&+\|\rho (y_n-y_0)\|_{ \C^0([0,T];H^{1-r}(\Omega)) } +  \left\|\frac{\rho}{\eta\Psi^{1/2}} (v_n-v_0)\right\|_{H^{1-r}(0,T;L^2(\partial\Omega)) }\\
			&\leq C(s)   \|\rho(f(\yy_n)-f(\yy_0))\|_{L^2(0, T;H^{-r}(\Omega)) } .
		\end{aligned}
$$
This  gives with \Cref{estimation_lemma1-H} that $(\rho(y_n-y_0))_{n\in\N}$ is a bounded sequence of $\mathcal{C}^{1}(0,T;H^{-r}(\Omega)) \cap\C^0([0,T];H^{1-r}(\Omega))$. Thus, since 
 $\mathcal{C}^{1}(0,T;H^{-r}(\Omega)) \cap\C^0([0,T];H^{1-r}(\Omega))\hookrightarrow L^\infty(0,T;L^2(\Omega))$ is compact  (see \cite{Simon1}, Corollary 5 p.86), there exist a subsequence $(\rho (y_{n_k}-y_0))_{n_k\in\N}$ and $z\in L^\infty(0,T;L^2(\Omega))$ such that $\rho(y_{n_k}-y_0)\to z$ in $L^\infty(0,T;L^2(\Omega))$. Therefore, $\rho y_{n_k}\to y=z+\rho y_0$ in $L^\infty(0,T;L^2(\Omega))$. 
 Since $\rho y_{n_k}\in \mathcal{C}(s)$ for all $n_k$, $ y=z+\rho y_0\in  \mathcal{C}(s)$. Thus $\Lambda_s(\C(s))$ is a relatively compact subset of $\C(s)$ for the $L^\infty(0,T;L^2(\Omega))$ norm.
 \end{proof}

\subsubsection{Continuity of the map $\Lambda_s$ in $\C(s)$}\label{Continuity-Schauder}

\begin{lemma}\label{ContinuityMap}
Assume that $f\in \mathcal{C}^0(\mathbb{R})$ satisfies \ref{H1}. Then, the map $\Lambda_s : \C(s)\rightarrow \C(s)$ is continuous for the $L^\infty(0,T;L^2(\Omega))$ norm.
\end{lemma}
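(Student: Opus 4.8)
The plan is to show sequential continuity: take $\widehat y_n \to \widehat y$ in $L^\infty(0,T;L^2(\Omega))$ with $\widehat y_n, \widehat y \in \C(s)$, and prove $\Lambda_s(\widehat y_n) \to \Lambda_s(\widehat y)$ in the same norm. Write $y_n := \Lambda_s(\widehat y_n)$, $y := \Lambda_s(\widehat y)$, and note that by linearity of the map $(B,u_0,u_1)\mapsto(y,v)$ coming from \Cref{Lemma_relation_control} (with the data $(u_0,u_1)$ fixed), the difference $y_n - y$ is the controlled trajectory associated with right-hand side $B_n := -(f(\widehat y_n) - f(\widehat y))$ and zero initial data. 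The key quantitative input is \Cref{Interpolation} with $r = 3/2 - p \in (0,1/2)$, which gives
\begin{equation*}
\|\rho(y_n - y)\|_{\C^0([0,T];H^{1-r}(\Omega))} \le C(s)\,\|\rho(f(\widehat y_n) - f(\widehat y))\|_{L^2(0,T;H^{-r}(\Omega))},
\end{equation*}
so in particular, since $H^{1-r}(\Omega) \hookrightarrow L^2(\Omega)$,
\begin{equation*}
\|\rho(y_n - y)\|_{L^\infty(0,T;L^2(\Omega))} \le C(s)\,\|\rho(f(\widehat y_n) - f(\widehat y))\|_{L^2(0,T;H^{-r}(\Omega))}.
\end{equation*}
Since $\rho,\rho^{-1}\in\C^\infty(\overline Q)$ are bounded above and below by positive constants (for fixed $s$), it suffices to show $f(\widehat y_n)\to f(\widehat y)$ in $L^2(0,T;H^{-r}(\Omega))$, or even in $L^2(0,T;L^{p^\star}(\Omega))$ with $p^\star = \frac{2d}{d+3-2p}$ as in \Cref{estimation_lemma1-H}, using the embedding $L^{p^\star}(\Omega)\hookrightarrow H^{-r}(\Omega)$.

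Next I would establish this nonlinear convergence by a dominated-convergence argument. Up to a subsequence, $\widehat y_n \to \widehat y$ a.e.\ in $Q$, and, by continuity of $f$, $f(\widehat y_n) \to f(\widehat y)$ a.e.\ in $Q$. For the domination, the growth bound \ref{H1} gives $|f(\widehat y_n)| \le \alpha_1 + |\widehat y_n|(\alpha_2 + \beta^\star \ln_+^p(\widehat y_n))$; arguing exactly as in the proof of \Cref{estimation_lemma1-H} (splitting $\ln_+^p(\widehat y_n)\le C((cs)^p + \ln_+^p(\rho\widehat y_n))$ and using $\rho\ge e^{-cs}$), one bounds $\|\rho f(\widehat y_n)\|_{L^2(0,T;L^{p^\star}(\Omega))}$ by a constant times $\|\rho\widehat y_n\|_{L^2(Q)}^{} \|\rho\widehat y_n\|_{L^\infty(0,T;L^2(\Omega))}^{\varepsilon}$ plus lower-order terms, which is uniformly bounded because $\widehat y_n \in \C(s)$. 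A cleaner route is to use that $\widehat y_n \to \widehat y$ in $L^\infty(0,T;L^2(\Omega))$ already gives, via the $\C(s)$ bounds and the superlinear-with-log growth, equi-integrability of $\{|f(\widehat y_n)|^{p^\star}\}$ in $L^{2/p^\star}$-in-time sense: since $|\widehat y_n|^{(1+\varepsilon)p^\star}$ is controlled in $L^{2/((1+\varepsilon)p^\star)}$-ish spaces with room to spare (because $\varepsilon$ can be taken strictly below the critical exponent), Vitali's convergence theorem applies and yields $f(\widehat y_n)\to f(\widehat y)$ in $L^2(0,T;L^{p^\star}(\Omega))$ without passing to a subsequence. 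Either way one concludes $\|\rho(f(\widehat y_n)-f(\widehat y))\|_{L^2(0,T;H^{-r}(\Omega))}\to 0$.

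Combining, $\|\rho(y_n - y)\|_{L^\infty(0,T;L^2(\Omega))} \to 0$, and since $\rho^{-1}$ is bounded on $\overline Q$, also $\|y_n - y\|_{L^\infty(0,T;L^2(\Omega))} \to 0$, i.e.\ $\Lambda_s$ is continuous. To handle the standard subsequence subtlety (if one uses the a.e.-convergence version): a posteriori every subsequence of $(y_n)$ has a further subsequence converging to $\Lambda_s(\widehat y)$ in $L^\infty(0,T;L^2(\Omega))$, so the whole sequence converges to that limit. The main obstacle is the domination/equi-integrability step for the nonlinear term: because $f$ is only continuous with superlinear (log-corrected) growth and the elements of $\C(s)$ are merely bounded in $L^\infty(0,T;L^2(\Omega))$ and in the weighted $L^2(Q)$ norm, one cannot invoke the Lipschitz estimates available in the $\C^1$ setting and must instead exploit the strict slack in the exponent $\varepsilon < \min(p/3,(3-2p)/d)$ exactly as in \Cref{estimation_lemma1-H} to get a uniform higher-integrability bound that powers Vitali's theorem; once that is in place, the linear estimate from \Cref{Interpolation} does the rest.
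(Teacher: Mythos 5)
Your proposal is correct and follows essentially the same strategy as the paper's proof: reduce to showing $f(\widehat y_n)\to f(\widehat y)$ in a Lebesgue space that embeds into a negative Sobolev space (via a.e.\ convergence along a subsequence, the growth bound \ref{H1}, and the subsequence-of-subsequences trick to recover the full sequence), then conclude with the linear weighted estimate applied to the difference with zero data. The paper takes $r=1$, i.e.\ it uses the embedding $L^{2/(1+\varepsilon)}(\Omega)\hookrightarrow H^{-1}(\Omega)$ together with \eqref{weighted_estimate_control_traj-frac} directly rather than $r=3/2-p$ and \Cref{Interpolation}, and it runs dominated convergence with the \emph{pointwise} majorant $z\in L^2(Q)$ produced by the subsequence extraction; be aware that your first formulation of the ``domination'' step --- a uniform bound on $\|\rho f(\widehat y_n)\|_{L^2(0,T;L^{p^\star}(\Omega))}$ --- is not by itself a dominated-convergence hypothesis, so you should either invoke the pointwise majorant $z$ as the paper does or rely on your Vitali variant (a.e.\ convergence plus uniform boundedness in a strictly stronger exponent), which is the version of your argument that actually closes.
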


\begin{proof}
Let $(\yy_n)_{n\in\mathbb{N}}$ be a sequence in $\C(s)$ such that $(\yy_n)_{n\in\mathbb{N}}$ converges to $\yy\in \C(s)$ with respect to the $L^\infty(0,T;L^2(\Omega))$ norm. Let $y_n := \Lambda_s(\yy_n)$ for all $n\in \mathbb{N}$. Since $\yy_n\to \yy$ in $L^\infty(0,T;L^2(\Omega))$, $\yy_n\to \yy$ in $L^2(Q)$ and 
there exist a subsequence $(\yy_{n_k})_{n_k\in\mathbb{N}}$  and $z\in L^2(Q)$ such that $\yy_{n_k}\to \yy$ a.e and $|\yy_{n_k}|\le z$ a.e for all $n_k$.

Thus, since $f\in\mathcal{C}^0(\R)$,   $f(\yy_{n_k})\to f(\yy)$ a.e. 
Let us  choose $\varepsilon<1$ small enough so $L^{\frac{2}{1+\varepsilon}}(\Omega) \hookrightarrow H^{-1}(\Omega)$. For all $n_k\in\N$, a.e: 
$$ \begin{aligned}
|f(\yy_{n_k})|
&\le  \alpha_1 + |\yy_{n_k}|\left( \alpha_2 + \beta^\star\ln_+^{p}|\yy_{n_k}| \right)\le  \alpha_1 + \alpha_2  |\yy_{n_k} |+
  \beta^\star\ln_+^{p}|\yy_{n_k}|  |\yy_{n_k}|\\
  &\le  \alpha_1 + \alpha_2  |z|+
  \beta^\star\ln_+^{3/2}|z|  |z|\le  \alpha_1 + \alpha_2  |z|+
  C_\varepsilon \beta^\star |z|^{1+\varepsilon} .
  \end{aligned}$$
and $\alpha_1 + \alpha_2  |z|+
  C\beta^* |z|^{1+ \varepsilon}\in L^{\frac2{1+ \varepsilon}}(Q) $, since $z\in L^2(Q)$ and thus, we deduce from the dominated convergence theorem that $f(\yy_{n_k})\to f(\yy)$ in $L^{\frac2{1+ \varepsilon}}(Q) $. 
  
  In fact,  $f(\yy_{n })\to f(\yy)$ in $ L^{\frac2{1+ \varepsilon}}(Q)$  (and thus $f(\yy_{n })\to f(\yy)$ in $L^q(0,T;L^{\frac2{1+ \varepsilon}}(\Omega))$ for all $1\le q<+\infty$ since $(f(\yy_n))_{n\in\N}$ is bounded in $L^\infty(0,T;L^{\frac2{1+ \varepsilon}}(\Omega))$). If not, there exists  $\varepsilon_1>0$ and a subsequence $(\yy_{n_k})_{n_k\in\mathbb{N}}$ of $(\yy_n)_{n\in\mathbb{N}}$ such that $\|f(\yy_{n_k})-f(\yy)\|_{ L^{\frac2{1+ \varepsilon}}(Q)}\ge \varepsilon_1$.
  But, arguing as previouly, there exist a subsequence $(\yy_{n_{k'}})_{n_{k'}\in\mathbb{N}}$  of  $(\yy_{n_k})_{n_k\in\mathbb{N}}$ such that $f(\yy_{n_{k'}})\to f(\yy)$ in $ L^{\frac2{1+ \varepsilon}}(Q)$ which leads to a contradiction.
  
We denote by $v_n$ and $v$ the associated control to $y_n$ and $y$ respectively and we have, by definition of the operator $\Lambda_s$, that $(y_n, v_n) = \rho^{-2}(Lw_n, s\eta^2\Psi\partial_\nu w_n)$ and  $(y, v) = \rho^{-2}\left(Lw, s\eta^2\Psi\partial_\nu w\right)$ with $w_n$ and $w$ solution of \eqref{bilinear equation} associated with $(u_0, u_1, -f(\widehat{y}_n))$ and $(u_0, u_1, -f(\widehat{y}))$ respectively. In particular, $z_n := y_n - y = \Lambda_s(\yy_n) - \Lambda_s(\yy)$ satisfies $z_n = \rho^{-2} L(w_n - w)$ with $w_n - w$ solution of \eqref{bilinear equation} associated with data $\left(0,0, f(\widehat{y})-f(\widehat{y}_n)\right)$. Thus, using that $L^2(0,T;L^{\frac2{1+ \varepsilon}}(\Omega)) \hookrightarrow L^2(0,T;H^{-1}(\Omega))$, estimate \eqref{weighted_estimate_control_traj-frac} with $B=f(\yy_n)-f(\yy )$, $u_0=u_1=0$ implies
\begin{equation}\label{A}
\begin{aligned}
\|\rho(y_n-y)\|_{L^\infty(0,T;L^2(\Omega))} 
&\leq  C s^{3/2}\| \rho(f(\yy_n)-f(\yy ))\|_{L^2(0,T;H^{-1}(\Omega))}  \\
&\leq  C s^{3/2}\| \rho(f(\yy_n)-f(\yy ))\|_{L^2(0,T;L^{\frac2{1+ \varepsilon}}(\Omega))} 
\end{aligned}
\end{equation}
and thus  $y_n \rightarrow y$ as $n\rightarrow +\infty$ in $L^\infty(0,T;L^2(\Omega))$.
\end{proof}

\subsection{Second part of Theorem \ref{main_th}}\label{sec:FixedPointPart2}

We now assume that $f\in\mathcal{C}^1(\mathbb{R})$ satisfies \ref{asymptotic_behavior_prime_p} with $0\le p<3/2$;  $f$ then satisfies \ref{H1} for $\alpha_1=f(0) $ and $\alpha_2=\alpha$ so that results of \Cref{sec:FixedPointPart1} remain true. We are going to show that $\Lambda_s:\C(s)\to\C(s)$ is a contracting mapping of the complete space $(\C(s),d)$ with $d:\C(s)\times \C(s)\to \R$  defined by $d(y,z):=\Vert \rho(y-z)\Vert_{L^2(Q)}$. The Banach fixed point theorem will ensure the existence of a unique fixed point of $\Lambda_s$ which is a controlled trajectory for the semilinear problem \eqref{main_control_problem}. 
\begin{prop}\label{Prop-Banach_contraction} 
	Assume that  there exists $0\le p<3/2$ such that  $f\in \C^1(\R)$ satisfies \ref{asymptotic_behavior_prime_p} with $\beta^\star$ and $s$ as in \Cref{Stability_class_Schauder}. Then, $\Lambda_s$ is a contraction mapping from $(\C(s),d)$ into itself.
 \end{prop}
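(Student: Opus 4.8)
The plan is to show that $\Lambda_s$ contracts in the metric $d(y,z)=\Vert\rho(y-z)\Vert_{L^2(Q)}$ by exploiting that the difference of two linearized controlled trajectories is again a controlled trajectory of the linear problem with zero data and right-hand side $-(f(\widehat y_1)-f(\widehat y_2))$. Concretely, for $\widehat y_1,\widehat y_2\in\C(s)$ write $y_i=\Lambda_s(\widehat y_i)$; then $y_1-y_2$ is the state-control pair from \Cref{Lemma_relation_control} with $u_0=u_1=0$ and $B=-(f(\widehat y_1)-f(\widehat y_2))$. Applying the weighted estimate \eqref{weighted_estimate_control_traj-frac} with exponent $r\in[0,1)$, $r\neq1/2$, chosen appropriately (namely $r=3/2-p$ so that $H^{-r}(\Omega)\hookleftarrow L^{p^\star}(\Omega)$ with $p^\star=\frac{2d}{d+3-2p}$ as in \Cref{estimation_lemma1-H}), gives
\begin{equation}\label{plan-contr-base}
\Vert\rho(y_1-y_2)\Vert_{L^2(Q)}\le C\,s^{r-3/2}\Vert\rho(f(\widehat y_1)-f(\widehat y_2))\Vert_{L^2(0,T;H^{-r}(\Omega))}=C\,s^{-p}\Vert\rho(f(\widehat y_1)-f(\widehat y_2))\Vert_{L^2(0,T;H^{-r}(\Omega))}.
\end{equation}
So everything reduces to bounding $\Vert\rho(f(\widehat y_1)-f(\widehat y_2))\Vert_{L^2(0,T;L^{p^\star}(\Omega))}$ by $C\,s^{p}\,\beta^\star\,\Vert\rho(\widehat y_1-\widehat y_2)\Vert_{L^2(Q)}$ up to lower-order-in-$s$ terms; combined with \eqref{plan-contr-base} this yields a contraction constant of the form $C\beta^\star c^p$ (plus terms that vanish as $s\to\infty$), which is $<1$ once $\beta^\star$ is small and $s$ is large, exactly as in \Cref{Stability_class_Schauder}.

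The core estimate on $f(\widehat y_1)-f(\widehat y_2)$ is where the $\C^1$ hypothesis \ref{asymptotic_behavior_prime_p} enters. I would write $f(\widehat y_1)-f(\widehat y_2)=\big(\int_0^1 f'(\widehat y_2+\tau(\widehat y_1-\widehat y_2))\,d\tau\big)(\widehat y_1-\widehat y_2)$ and use \ref{asymptotic_behavior_prime_p} to bound the integrand pointwise by $\alpha+\beta^\star\ln_+^p(\max(|\widehat y_1|,|\widehat y_2|))$. Then by Hölder with exponents tuned so that $L^{p^\star}$ on the left comes from $L^2$ (for the factor $\rho(\widehat y_1-\widehat y_2)$) times a high Lebesgue exponent (for the logarithmic factor), together with the bound $\ln_+^p(|\widehat y_i|)\le C((cs)^p+\ln_+^p(\rho|\widehat y_i|))$ coming from $\rho^{-1}\le e^{cs}$, and finally the refinement $\Vert\rho\widehat y_i\ln_+^p(\rho\widehat y_i)\Vert\le C_\varepsilon\Vert|\rho\widehat y_i|^{1+\varepsilon}\Vert\le C_\varepsilon\Vert\rho\widehat y_i\Vert_{L^2(Q)}\Vert\rho\widehat y_i\Vert_{L^\infty(0,T;L^2(\Omega))}^\varepsilon$ for $\varepsilon$ small (cf. \eqref{estim-L2H1}), one gets, using $\widehat y_i\in\C(s)$ and hence $\Vert\rho\widehat y_i\Vert_{L^2(Q)}\le s$, $\Vert\rho\widehat y_i\Vert_{L^\infty(0,T;L^2(\Omega))}\le s^3$,
\begin{equation}\label{plan-f-diff}
\Vert\rho(f(\widehat y_1)-f(\widehat y_2))\Vert_{L^2(0,T;L^{p^\star}(\Omega))}\le C\big(\alpha+\beta^\star c^p s^{p}+\beta^\star s^{3\varepsilon}\big)\Vert\rho(\widehat y_1-\widehat y_2)\Vert_{L^2(Q)}.
\end{equation}
I should double-check the admissible range of $\varepsilon$: Sobolev embedding needs $\varepsilon<\frac{2-p^\star}{p^\star}=\frac{3-2p}{d}$, and for the $s^{3\varepsilon}$ term to be dominated after multiplying by $s^{-p}$ in \eqref{plan-contr-base} I also want $3\varepsilon<p$ when $p>0$ (and the $p=0$ case needs a separate but easy remark, since then $f'$ is bounded and \eqref{plan-f-diff} holds with just $C\alpha$ on the right — no logarithm, no $\varepsilon$). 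Picking $\varepsilon<\min(\tfrac{p}{3},\tfrac{3-2p}{d})$ makes this work.

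Putting \eqref{plan-contr-base} and \eqref{plan-f-diff} together,
\begin{equation}\label{plan-final}
d(\Lambda_s(\widehat y_1),\Lambda_s(\widehat y_2))\le C\big(\alpha s^{-p}+\beta^\star c^p+\beta^\star s^{3\varepsilon-p}\big)\,d(\widehat y_1,\widehat y_2),
\end{equation}
and since $3\varepsilon-p<0$ and $p>0$, the prefactor tends to $C\beta^\star c^p$ as $s\to+\infty$; choosing $\beta^\star$ so that $C\beta^\star c^p<1$ — which is the same smallness already imposed in \Cref{Stability_class_Schauder} (possibly shrinking $\beta^\star$ further by a fixed factor) — and then $s$ large enough gives a contraction constant strictly less than $1$. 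Since $(\C(s),d)$ is a closed subset of $L^2(Q)$ (the constraints defining $\C(s)$ pass to the $L^2(Q)$-limit, using that $\rho y$ bounded in $L^\infty(0,T;L^2(\Omega))$ is preserved), it is a complete metric space, so the Banach fixed point theorem applies. The main obstacle is the Hölder/interpolation bookkeeping in \eqref{plan-f-diff}: getting the exponent $p^\star$, the logarithm-to-power trick, and the range of $\varepsilon$ all consistent so that the final power of $s$ is negative; once that is done the contraction is immediate from the linear estimate. One should also remark that the fixed point is genuinely a controlled solution of \eqref{main_control_problem} with $B=-f(y)$, and that it is nontrivial, but these are inherited from \Cref{Lemma_relation_control} and the construction of $\Lambda_s$.
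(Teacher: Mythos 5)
Your proposal is correct and follows essentially the same route as the paper's proof: the weighted linear estimate \eqref{weighted_estimate_control_traj-frac} with $r=3/2-p$ applied to the difference with zero data, the mean-value bound on $f(\yy_1)-f(\yy_2)$ via \ref{asymptotic_behavior_prime_p}, the splitting $\ln_+^p|\yy_i|\le C((cs)^p+\ln_+^p(\rho|\yy_i|))$, and the $\ln_+^p\le C_\varepsilon|\cdot|^\varepsilon$ trick with $\varepsilon<\min(p/3,\cdot)$ exploited through the $\C(s)$ bounds, yielding the same contraction constant $C(\alpha s^{-p}+\beta^\star c^p)$ up to a vanishing term. The only point worth noting is that the paper reduces at the outset, without loss of generality, to $p>1$ (since \ref{asymptotic_behavior_prime_p} for $p$ implies it for any larger exponent), which keeps $r=3/2-p$ in $(0,1/2)$ and avoids the excluded values $r=1/2$ and $r>1$ that your choice would otherwise hit for small $p$.
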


\begin{proof} Without restriction, we assume that $p> 1$. Let $\yy_1, \yy_2 \in \C(s)$. From \eqref{weighted_estimate_control_traj-frac}, we get that, for all $0\le r< 1/2$ , $d(\Lambda_s (\yy_2),\Lambda_s(\yy_1))\leq C_r s^{r-3/2} \| \rho(f(\yy_2)  - f(\yy_1))\|_{L^2(0,T;H^{-r}(\Omega))}$. Let  $r=3/2-p>0$. There exists $1\le q<2$ such that $L^q(\Omega) \hookrightarrow  H^{-r}(\Omega)$. We then have
\begin{align*}
d(\Lambda_s (\yy_2),\Lambda_s(\yy_1))
\leq C_r s^{r-3/2} \| \rho(f(\yy_2)  - f(\yy_1))\|_{L^2(0,T;L^q(\Omega))}.
	\end{align*}
But, for all $(m_1,m_2)\in\R^2$ there exists $\bar c$ such that  
$$|f(m_1)-f(m_2)|\le |m_1-m_2||f'(\bar c)|	\le  |m_1-m_2| (\alpha+\beta^\star \ln_+^p|\bar c| ) \le  |m_1-m_2| (\alpha+\beta^\star\ln_+^p( |m_1|+|m_2|) )$$
and therefore, using that $0\le \ln^p_+ \rho \le c^ps^p$ and that $p=3/2-r$, we get
\begin{equation}
\label{eqcondS}
\begin{aligned}
d(\Lambda_s (\yy_2),\Lambda_s(\yy_1)) 
\leq& Cs^{-p} \| (\alpha+\beta^\star\ln_+^p(|\yy_1|+|\yy_2|))\rho(\yy_2  -  \yy_1)\|_{L^2(0,T;L^q(\Omega))}\\
\leq& C s^{-p} \| (\alpha+\beta^\star\ln_+^p(|\yy_1|+|\yy_2|))\|_{L^ \infty(0,T;L^a(\Omega))}\, d(\yy_2,\yy_1)\\
\leq& C s^{-p} \big(\alpha+\beta^\star c^ps^p+ \beta^\star\Vert\ln_+^p(\rho(|\yy_1|+|\yy_2|)))\|_{L^ \infty(0,T;L^a(\Omega))}\big)d(\yy_2,\yy_1)
\end{aligned}
\end{equation}
with $a$ such that $1/q=1/2+1/a$. Now, using that, for $\varepsilon=\inf\{\frac2a, \frac p3\}$
$$
\begin{aligned}
\Vert\ln_+^p(\rho(|\yy_1|+|\yy_2|))\|_{L^ \infty(0,T;L^a(\Omega))}&\leq C\big( \|   (\rho \yy_1)^{ \varepsilon  } \|_{L^ \infty(0,T;L^a(\Omega))}+  \|   (\rho \yy_2)^{\varepsilon  } \|_{L^ \infty(0,T;L^a(\Omega))}\big)\\
&\leq C\big( \|    \rho \yy_1  \|_{L^ \infty(0,T;L^2(\Omega))}^ \varepsilon +  \|   \rho \yy_2  \|_{L^ \infty(0,T;L^2(\Omega))}^ \varepsilon\big)\leq C s^p
\end{aligned}
$$
we infer that 
\begin{equation}\label{Banach_contraction} 
d(\Lambda_s (\yy_2),\Lambda_s(\yy_1)) \leq C(s^{-p} \alpha+\beta^\star c^p)d(\yy_2,\yy_1).	
\end{equation}
The contraction property follows for $s\geq s_0$ large enough and $\beta^\star>0$ small enough.
\end{proof}

\begin{remark}\label{remark-v} This also proves that 
$s^{-1/2}\left\|\frac{\rho}{\eta\Psi^{1/2}} (v_2-v_1)\right\|_{L^2(\Sigma)}
	\leq 
 C(s^{-p} \alpha+\beta^\star c^p)  d(\yy_2,\yy_1)$, for all $\yy_1, \yy_2 \in \C(s)$
where $v_1, v_2$ are the associated controls.
\end{remark}

As a corollary of the Banach fixed point theorem, the contraction property of the operator $\Lambda_s$ for $\beta^\star$ small enough and $s$ large enough allows to define a convergent sequence $(y_k,v_k)_{k\in \N}$ to a controlled pair for \eqref{main_control_problem} and prove the following more precise version of the second item of Theorem \ref{main_th}.

\begin{theorem}\label{banach-Convergence} 
	Let $(u_0, u_1) \in \boldsymbol{H}$.  Assume that $f$ is $\C^1(\R)$ and  satisfies \ref{asymptotic_behavior_prime_p} for some $0\le p<3/2$ with $\beta^\star$ small enough and $s$ as in \Cref{Stability_class_Schauder}. Then, for any $y_0 \in \C(s)$, the sequence $(y_k)_{k\in\mathbb{N}^\star} \subset \C(s)$ given by $y_{k+1}=\Lambda_s(y_k)$, $k\geq 0$,  
(where $\Lambda_s$ is defined by \eqref{operator_fixed_point}) together with the associated sequence of controls $(v_k)_{k\in\mathbb{N}^\star}\subset L^2(\Sigma))$ strongly converges in $\C^0([0,T]; L^2(\Omega))\times L^2(\Sigma)$ to a controlled solution for \eqref{main_control_problem}. The convergence is at least linear with respect to the distance $d$.
\end{theorem}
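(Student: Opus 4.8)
The plan is to invoke the Banach fixed point theorem for the operator $\Lambda_s$ acting on the complete metric space $(\C(s), d)$, where $d(y,z) = \Vert \rho(y-z)\Vert_{L^2(Q)}$. The contraction property has already been established in \Cref{Prop-Banach_contraction}: for $\beta^\star$ small enough and $s$ large enough, there is a constant $\kappa < 1$ such that $d(\Lambda_s(\yy_2), \Lambda_s(\yy_1)) \le \kappa\, d(\yy_2,\yy_1)$. First I would verify that $(\C(s),d)$ is indeed complete: $\C(s)$ is a closed convex subset of $L^\infty(0,T;L^2(\Omega))$ defined by the two weighted bounds in \eqref{closed_convex_set_Schauder}, and since $\rho$ and $\rho^{-1}$ are bounded above and below on $\overline Q$, a Cauchy sequence for $d$ is a Cauchy sequence in $L^2(Q)$; one extracts an a.e.\ convergent subsequence, checks the limit still satisfies the $L^\infty(0,T;L^2(\Omega))$ bound by Fatou, and concludes the limit lies in $\C(s)$. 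By \Cref{Stability_class_Schauder} the operator maps $\C(s)$ into itself. Hence Banach's theorem applies: there is a unique fixed point $y \in \C(s)$, and for any starting point $y_0 \in \C(s)$ the Picard iterates $y_{k+1} = \Lambda_s(y_k)$ satisfy $d(y_k, y) \le \kappa^k (1-\kappa)^{-1} d(y_1,y_0)$, which is the claimed (at least) linear rate.

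Next I would transfer the convergence to the control sequence and upgrade the topology. By \Cref{remark-v}, the controls satisfy $s^{-1/2}\Vert \frac{\rho}{\eta\Psi^{1/2}}(v_{k+1}-v_{k})\Vert_{L^2(\Sigma)} \le \kappa\, d(y_k, y_{k-1})$, so $(v_k)$ is Cauchy for the norm $\Vert \frac{\rho}{\eta\Psi^{1/2}} \cdot\Vert_{L^2(\Sigma)}$, hence (using $\eta,\Psi$ bounded and $\rho$ bounded below on the relevant region, together with the cut-off structure) Cauchy in $L^2(\Sigma)$; call its limit $v$. To pass to the limit in the equations, I note that each pair $(y_k, v_k)$ solves the linear controlled problem \eqref{linearized_model} with source $B_k = -f(\widehat y) = -f(y_{k-1})$ in the transposition sense \eqref{transposition_eq}; since $y_{k-1} \to y$ in $L^2(Q)$ and $f$ is continuous with the growth \ref{H1} (which, together with the uniform $\C(s)$ bounds, gives equi-integrability of $f(y_{k-1})$ in $L^2(0,T;H^{-1}(\Omega))$ via the argument already used in \Cref{ContinuityMap}), we get $f(y_{k-1}) \to f(y)$ in $L^2(0,T;H^{-1}(\Omega))$. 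Passing to the limit in the transposition identity and using the continuous dependence estimate \eqref{weighted_estimate_control_traj-frac}, the limit $(y,v)$ solves $Ly + f(y) = 0$ in $Q$, $y = v1_{\Gamma_0}$ on $\Sigma$, $(y(\cdot,0),y_t(\cdot,0)) = (u_0,u_1)$, with $(y(\cdot,T),y_t(\cdot,T)) = (0,0)$; by \Cref{Lemma_relation_control} (applied to the fixed-point source) and \Cref{coupling_ys_ws}, $y \in \C^0([0,T];L^2(\Omega)) \cap \C^1([0,T];H^{-1}(\Omega))$.

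Finally I would argue that the convergence $y_k \to y$ actually holds in $\C^0([0,T];L^2(\Omega))$, not merely for the weighted $L^2(Q)$ distance. For this I would reuse the compactness-type estimate behind \Cref{Interpolation} (or \Cref{ContinuityMap}): the difference $y_k - y$ solves \eqref{bilinear equation} with data $(0,0, f(y)-f(y_{k-1}))$ and zero initial conditions, so \eqref{weighted_estimate_control_traj-frac} with $r = 3/2 - p$ (and $L^q(\Omega)\hookrightarrow H^{-r}(\Omega)$) yields $\Vert \rho(y_k - y)\Vert_{L^\infty(0,T;L^2(\Omega))} \le C s^{3/2}\Vert \rho(f(y_{k-1}) - f(y))\Vert_{L^2(0,T;L^q(\Omega))} \to 0$, and dividing by the lower bound of $\rho$ gives convergence in $L^\infty(0,T;L^2(\Omega))$. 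The statement that the sequence is "non trivial" follows since $\Lambda_s$ does not vanish identically (the initial data $(u_0,u_1)$ is carried by the state). The main obstacle I anticipate is not the fixed-point machinery itself, which is essentially immediate once completeness is checked, but rather the careful bookkeeping in the limit passage — in particular ensuring that $f(y_{k-1}) \to f(y)$ in the right negative-order space uniformly in $s$, for which one recycles the dominated-convergence argument of \Cref{ContinuityMap} with the super-linear exponent $1+\varepsilon$ controlled by the $\C(s)$ bounds, and tracking that the weights $\rho$ and cut-offs $\eta,\Psi$ never degenerate where they are needed.
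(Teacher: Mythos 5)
Your proposal is correct and follows essentially the same route as the paper: the Banach fixed point theorem on $(\C(s),d)$ via \Cref{Prop-Banach_contraction}, the transfer to the controls through \Cref{remark-v}, and the upgrade to $\C^0([0,T];L^2(\Omega))$ convergence by applying \eqref{weighted_estimate_control_traj-frac} with $r=3/2-p$ to the difference $y-y_k$ with source $f(y)-f(y_{k-1})$. The only (harmless) difference is that where you invoke dominated convergence to pass to the limit in the source term, the paper simply reuses the quantitative Lipschitz-type bound from the proof of \eqref{Banach_contraction}, which also yields the linear rate in the stronger norm.
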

\begin{proof}
	The convergence of the sequence $(y_k)_{k\in\mathbb{N}}$ toward $y=\Lambda_s(y)\in \mathcal{C}(s)$ with linear rate follows from the contraction property of $\Lambda_s$: 
	$d(y,y_k) = d(\Lambda_s(y),\Lambda_{s}(y_{k-1})) \leq (C(s^{-p} \alpha+\beta^\star c^p))^k d(y,y_0)$ for all $k\geq 0$ deduced from \eqref{Banach_contraction}.
	
Let now $v\in H^1(0,T;L^2(\partial\Omega ))\cap \  \C^0 ([0,T];H^{1/2}(\partial\Omega ))$ be associated with $y$ so that $y-y_k$  satisfies, for every $k\in \mathbb{N}^\star$ 
$$ 
\left\{
	\begin{aligned}
	& L(y-y_k)   = -\big( f(y)-  f(y_{k-1})\big)  & \text{in } Q , \\
	& y-y_k = v-v_k, & \text{on } \Sigma,\\
	&((y-y_k)(\cdot,0),(y-y_k)_t(\cdot,0)) =(0,0) & \text{in } \Omega, \\
	 & 	((y-y_k)(\cdot,T),(y-y_k)_t(\cdot,T)) =(0,0) & \text{in } \Omega.
	\end{aligned}
	\right.
$$ 
From Remark \ref{remark-v}, we deduce that $\big\|\frac{\rho}{\eta\Psi^{1/2}} (v-v_k ) \big\|_{L^2(\Sigma)}  \leq s^{1/2} C(s^{-p}\alpha + \beta^\star c^p)d(y,y_{k-1})$	
and the convergence at a linear rate of the sequence $(v_k)_{k\in\mathbb{N}^\star}$ toward a control $v$ for \eqref{main_control_problem}.

Now, since  $f\in \mathcal{C}^1(\mathbb{R})$ satisfies \ref{H1}, using \eqref{weighted_estimate_control_traj-frac} for $r=3/2-p>0$ and \eqref{Banach_contraction}, we get	
$$
\begin{aligned}
 \|(\rho (y-y_k))_t\|_{\C^0([0,T];H^{-1}(\Omega))}+ \|\rho  (y-y_k)\|_{\C^0([0,T]; L^2(\Omega))}   
&\leq  
C s^{1/2+r}\| \rho(f(y)-f(y_{k-1}))\|_{L^2(0,T;H^{-r}(\Omega))} \\
&\le C  s^{2 } C(s^{-p} \alpha+\beta^\star c^p)\|\rho(y  -  y_{k-1})\|_{L^2(Q)}\\
&\leq Cs^2(C(s^{-p} \alpha+\beta^\star c^p))^k d(y,y_0).
\end{aligned}
$$
It follows that $y_k\to y$ in $\C^0([0,T]; L^2(\Omega))\cap \C^1([0,T];  H^{-1}(\Omega))$.
\end{proof}
\begin{remark}\label{speed_s}
Assume that exists $0\leq p<3/2$ such that $\lim\limits_{|r|\to +\infty} \frac{|f^{\prime}(r)|}{\ln^{p}_+|r|}  =0$, i.e. that $\beta^\star$ is arbitrarily small in \ref{asymptotic_behavior_prime_p}. Then, \eqref{Banach_contraction} shows that the constant of contraction of $\Lambda_s$ behaves like $s^{-p}$.
\end{remark}
\begin{remark}\label{p=troisdemi}
Assume $d\leq 3$ and $(u_0,u_1)\in H^2(\Omega)\cap H_0^1(\Omega) \times H_0^1(\Omega)$. Then, one may prove (repeating one step more the arguments of the Appendix) that the optimal state $y$ given in \Cref{Lemma_relation_control} belongs to $L^\infty(Q)$. This allows (following \cite{Bhandari_MCSS_2022}) to reach the value $p=3/2$ in \ref{asymptotic_behavior_prime_p}. We refer to \cite{Claret_thesis} together with numerical illustrations. 
\end{remark}

\subsection{Third part of Theorem \ref{main_th}} \label{sec:FixedPointPart3}

Assuming the initial data in $\boldsymbol{V}$, we follow the arguments of \Cref{sec:FixedPointPart1} and get that the uniform controllability holds true under the condition \ref{H1} with $p=3/2$.
For any $s\geq s_0$, we introduce the class $\widetilde\C(s)$ defined as the closed convex subset of $L^{2}(Q) $  
\begin{equation}\label{closed_convex_set_Schauder_tilde}
\widetilde\C(s): = \Big\{  y\in H^1(Q) :\   \|\rho y \|_{L^2(Q)} \leq s, \ \|(\rho  y)_t \|_{L^2(Q)} \leq s^2,\ \|\nabla(\rho y) \|_{L^2(Q)} \leq s^3\Big\}.
\end{equation}
\subsubsection{Estimates  of $ \Lambda_s(\yy)$}

\begin{lemma}\label{estimation_lemma1-bis}
Assume that $f\in \C^0(\R)$ satisfies \ref{H1} with $p=3/2$. For any $s\geq s_0$ and $\yy\in \widetilde\C(s)$, there exists $C>0$ such that 
$$
\Vert \rho f(\widehat y)\Vert_{L^2(Q)}\leq C \biggl(\alpha_1 e^{-s}T^{1/2}\vert \Omega\vert^{1/2}+\alpha_2 s+\beta^\star (cs)^{3/2}s\biggr)
$$
with $c=\Vert \phi\Vert_{L^\infty(Q)}$.
\end{lemma}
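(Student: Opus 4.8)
The plan is to follow the scheme of the proof of \Cref{estimation_lemma1-H}, but now estimating directly in $L^2(Q)$ and exploiting the $H^1(Q)$-bounds encoded in the definition of $\widetilde\C(s)$ instead of the mixed $L^\infty(0,T;L^2(\Omega))\cap L^2(Q)$ structure. From \ref{H1} with $p=3/2$ and the triangle inequality,
$$
\Vert \rho f(\yy)\Vert_{L^2(Q)} \le \alpha_1 \Vert \rho\Vert_{L^2(Q)} + \alpha_2 \Vert \rho\yy\Vert_{L^2(Q)} + \beta^\star \Vert \rho\yy\, \ln_+^{3/2}|\yy|\Vert_{L^2(Q)}.
$$
Since $\rho\le e^{-s}$ on $Q$, the first term is at most $\alpha_1 e^{-s}T^{1/2}|\Omega|^{1/2}$, and $\yy\in\widetilde\C(s)$ gives $\Vert \rho\yy\Vert_{L^2(Q)}\le s$ for the second; it thus remains to bound the logarithmic term by $C\beta^\star (cs)^{3/2}s$.

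For that term, I would write $|\yy|=\rho^{-1}\,(\rho|\yy|)$ and use $\ln_+(ab)\le \ln_+a+\ln_+b$, the elementary inequality $(x+y)^{3/2}\le C(x^{3/2}+y^{3/2})$ for $x,y\ge 0$, and $\rho^{-1}\le e^{cs}$ to obtain the pointwise bound $\ln_+^{3/2}|\yy|\le C\big((cs)^{3/2}+\ln_+^{3/2}(\rho|\yy|)\big)$. Hence
$$
\Vert \rho\yy\, \ln_+^{3/2}|\yy|\Vert_{L^2(Q)} \le C\Big( (cs)^{3/2}\Vert \rho\yy\Vert_{L^2(Q)} + \big\Vert \rho\yy\, \ln_+^{3/2}(\rho\yy)\big\Vert_{L^2(Q)}\Big),
$$
the first summand being $\le C(cs)^{3/2}s$. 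For the second summand I would use that, for every $\varepsilon>0$, $\ln_+^{3/2}(t)\le C_\varepsilon t^{\varepsilon}$ for all $t\ge 0$, so that it is bounded by $C_\varepsilon\big\Vert |\rho\yy|^{1+\varepsilon}\big\Vert_{L^2(Q)}=C_\varepsilon\Vert \rho\yy\Vert_{L^{2(1+\varepsilon)}(Q)}^{1+\varepsilon}$.

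To control $\Vert \rho\yy\Vert_{L^{2(1+\varepsilon)}(Q)}$, note that $\rho\in\C^\infty(\overline Q)$ and $\yy\in\widetilde\C(s)$ give $\rho\yy\in H^1(Q)$ with $\Vert \rho\yy\Vert_{H^1(Q)}\le C\big(\Vert \rho\yy\Vert_{L^2(Q)}+\Vert (\rho\yy)_t\Vert_{L^2(Q)}+\Vert \nabla(\rho\yy)\Vert_{L^2(Q)}\big)\le Cs^3$, using $s\ge 1$. Since $Q\subset\R^{d+1}$, the Sobolev embedding yields $H^1(Q)\hookrightarrow L^q(Q)$ for some $q>2$, whence $\Vert \rho\yy\Vert_{L^q(Q)}\le Cs^3$; interpolating between $L^2(Q)$ and $L^q(Q)$ for $\varepsilon$ small enough that $2(1+\varepsilon)<q$, I get $\Vert \rho\yy\Vert_{L^{2(1+\varepsilon)}(Q)}\le \Vert \rho\yy\Vert_{L^2(Q)}^{1-\theta}\Vert \rho\yy\Vert_{L^q(Q)}^{\theta}\le Cs^{1+2\theta}$ with $\theta=\theta(\varepsilon)\to 0$ as $\varepsilon\to 0$. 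Therefore $\Vert \rho\yy\Vert_{L^{2(1+\varepsilon)}(Q)}^{1+\varepsilon}\le Cs^{(1+2\theta)(1+\varepsilon)}$, and choosing $\varepsilon$ so small that $(1+2\theta)(1+\varepsilon)\le 5/2$ bounds this by $Cs^{5/2}$. Since $\phi=e^{\lambda\psi}>1$ on $\overline Q$ we have $c=\Vert\phi\Vert_{L^\infty(Q)}\ge 1$, so $s^{5/2}\le (cs)^{3/2}s$, and gathering the pieces yields the stated estimate.

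The only genuine obstacle is the treatment of the super-linear logarithmic term $\rho\yy\,\ln_+^{3/2}|\yy|$: one must split off the weight to turn $\ln_+^{3/2}|\yy|$ into $(cs)^{3/2}$ plus a term depending only on $\rho\yy$, then trade the remaining logarithm for an arbitrarily small power of $\rho\yy$ and absorb it through the $H^1(Q)$–Sobolev embedding. This last step is precisely why the fixed-point class in this part of the proof is taken as a subset of $H^1(Q)$; everything else is a routine use of Hölder's inequality and interpolation.
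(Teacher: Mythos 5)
Your proof is correct, and it follows the paper's overall scheme (triangle inequality, $\rho\le e^{-s}$ for the constant term, splitting $\ln_+^{3/2}|\yy|\le C\big((cs)^{3/2}+\ln_+^{3/2}(\rho|\yy|)\big)$, then trading the remaining logarithm for a small power of $\rho\yy$). The one place where you genuinely diverge is the estimate of $\big\Vert \rho\yy\,\ln_+^{3/2}(\rho\yy)\big\Vert_{L^2(Q)}$. The paper bounds $\Vert\,|\rho\yy|^{1+\varepsilon/2}\Vert_{L^2(Q)}=\Vert\rho\yy\Vert_{L^{2+\varepsilon}(Q)}^{1+\varepsilon/2}$ by an anisotropic Gagliardo--Nirenberg inequality of the form $\Vert u\Vert_{L^{2+\varepsilon}(Q)}^{1+\varepsilon/2}\le C\Vert u\Vert_{L^\infty(0,T;L^2(\Omega))}^{1-\varepsilon(d-2)/4}\Vert\nabla u\Vert_{L^2(Q)}^{\varepsilon d/4}$, combined with $\Vert\rho\yy\Vert_{L^\infty(0,T;L^2(\Omega))}\le \sqrt2\,\Vert\rho\yy\Vert_{L^2(Q)}^{1/2}\Vert(\rho\yy)_t\Vert_{L^2(Q)}^{1/2}\le\sqrt2\,s^{3/2}$, and picks $\varepsilon=\tfrac{4}{3(d+2)}$ to land exactly on $Cs^2$. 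You instead use the isotropic embedding $H^1(Q)\hookrightarrow L^q(Q)$ in $\R^{d+1}$ together with Lebesgue interpolation between $L^2(Q)$ and $L^q(Q)$, obtaining $Cs^{(1+2\theta)(1+\varepsilon)}\le Cs^{5/2}$, which still sits below $(cs)^{3/2}s$ because $c=\Vert\phi\Vert_{L^\infty(Q)}\ge 1$. Both arguments use exactly the same three bounds from the definition of $\widetilde\C(s)$; yours is slightly more elementary and requires less careful exponent bookkeeping (any bound $s^{1+\delta}$ with $\delta$ small would do), while the paper's anisotropic route gives the sharper power $s^2$ and makes transparent which of the three constraints in $\widetilde\C(s)$ is responsible for which factor. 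Either way the final constant $C\beta^\star(cs)^{3/2}s$ is recovered, so the lemma is proved.
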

\begin{proof} Using that $\rho\leq e^{-s}$, we infer that 

$$
\begin{aligned}
\Vert \rho f(\widehat y)\Vert_{L^2(Q)}&\leq C  \Vert \rho \alpha_1 + |\rho \widehat y|\left( \alpha_2 + \beta^\star\ln_+^{3/2}(\widehat y) \right)\Vert_{L^2(Q)}\\
& \leq C (\alpha_1 e^{-s}T^{1/2}\vert \Omega\vert^{1/2}+\alpha_2\Vert \rho \widehat y\Vert_{L^2(Q)}+\beta^\star  \Vert \rho \widehat y \ln_+^{3/2}(\widehat y)\Vert_{L^2(Q)}.
\end{aligned}
$$
Now, since, $0\le \ln_+^{3/2}(\widehat y)\le C(\ln_+^{3/2} \rho^{-1}+ \ln_+^{3/2}(\rho \widehat y))\le  C((cs)^{3/2}+ \ln_+^{3/2}(\rho \widehat y))$, we get that
$$
\Vert \rho \widehat y \ln_+^{3/2}(\widehat y)\Vert_{L^2(Q)}\leq C\big((cs)^{3/2}\Vert \rho \widehat y\Vert_{L^2(Q)}+ \Vert\rho \widehat y\ln_+^{3/2}(\rho \widehat y)\Vert_{L^2(Q)}\big).
$$
But, for all $0<\varepsilon\le 4/d$, there exists $C_\varepsilon>0$ such that
\begin{equation}
\begin{aligned}\label{estim-L2H1-bis}
\|\rho \widehat y \ln^{3/2}_+ (\rho \widehat y)\|_{L^2(Q)}
&\le C_\varepsilon \| |\rho \widehat y|^{1+\frac\varepsilon2}\|_{L^2(Q)}=C_\varepsilon \| \rho \widehat y\|_{L^{2+\varepsilon}(Q)}^{1+\frac\varepsilon2}
\le C_\varepsilon   \| \rho \widehat y \|^{1-\frac{\varepsilon(d-2)}4}_{L^\infty(0,T;L^2(\Omega))}\| \nabla(\rho \widehat y) \|_{L^2(Q)}^{\frac{\varepsilon d}4} \end{aligned}
\end{equation}
and since $\yy \in \widetilde\C(s)$, we have $\|\rho \yy\|_{L^\infty(0,T;L^2(\Omega))} \le  \sqrt{2}\|\rho \yy\|^{1/2}_{L^2(Q)}\|(\rho  \yy)_t\|^{1/2}_{L^2(Q)}\le \sqrt{2}s^{3/2}$.
Thus
$$\begin{aligned}
\|\rho \widehat y \ln^{3/2}_+ (\rho \widehat y)\|_{L^2(Q)}
&\le C_\varepsilon  s^{\frac32-\frac{3\varepsilon(d-2)}8}s^{\frac{3\varepsilon d}4}=C_\varepsilon  s^{\frac32+\frac{3\varepsilon(d+2)}8}.
\end{aligned}
$$
In particular for $\varepsilon =\frac4{3(d+2)}<\frac4d$ this gives
$\|\rho \widehat y\ln^{3/2}_+ (\rho \widehat y)\|_{L^2(Q)}\le C  s^2$. Combining the above inequalities, we get the result. 
\end{proof}

\begin{prop}\label{Proposition-linear_to_nonlinear_Schauder-bis}
For any $x_0\notin\overline{\Omega}$, we assume \eqref{Geometric_Condition} and \eqref{condT}.
	Assume $(u_0,u_1)\in \boldsymbol{V}$. Assume that $f\in \CC(\mathbb{R})$ satisfies \ref{H1} with $p=3/2$. For  $s\geq s_0$ and for all  $\yy\in \widetilde\C(s)$, the solution $y=\Lambda_s(\yy)$ to the linearized controlled system \eqref{linearized_model} with control $v$ satisfies $y\in\C^0([0,T];H^1(\Omega))\cap \C^1([0,T];L^2(\Omega))$ and the following estimates:
\begin{multline}\label{linearized-estimate-Schauder-1-BIS}
			\|\rho y \|_{L^2(Q)} + s^{-1/2} \biggl\| \frac{\rho}{ \Psi^{1/2}\eta}v \biggr\|_{L^2(\Sigma)} +s^{-1}\|(\rho y)_t\|_{L^2(Q)} + s^{-3/2} \biggl\| \frac{1}{ \Psi^{1/2}\eta}(\rho v)_t \biggr\|_{L^2(\Sigma)}+s^{-2}\|\nabla (\rho y)\|_{L^2(Q)} \\
		\leq  \Big(C \alpha_2 s^{-3/2}+ \beta^\star C c^{3/2}\Big)  s +\Big(C   \| u_0\|_{L^2(\Omega)} +Cs^{-1}  \Big( \alpha_1 T^{1/2}|\Omega|^{1/2} +  \| \nabla u_0\|_{L^2(\Omega)}+\| u_1 \|_{L^2(\Omega)}\Big)\Big) s^{- 1/2} e^{-s}.
\end{multline}
\end{prop}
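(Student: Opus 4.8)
The argument follows the pattern of the proof of \Cref{Proposition-linear_to_nonlinear_Schauder}, using the $\boldsymbol V$-regularity of \Cref{Thm-regularity-one} in place of the $\boldsymbol H$-estimates. Set $B:=-f(\widehat y)$. Since $\widehat y\in\widetilde\C(s)\subset H^1(Q)$, \Cref{estimation_lemma1-bis} yields $B\in L^2(Q)$ with
\begin{equation*}
\|\rho B\|_{L^2(Q)}\le C\big(\alpha_1 e^{-s}T^{1/2}|\Omega|^{1/2}+\alpha_2 s+\beta^\star c^{3/2}s^{5/2}\big).
\end{equation*}
Hence \Cref{Thm-regularity-one} applies to \eqref{linearized_model} with data $(u_0,u_1)\in\boldsymbol V$ and right-hand side $B$, and already gives the claimed regularity $v\in\C^0([0,T];H^{1/2}(\partial\Omega))$ and $y\in\C^0([0,T];H^1(\Omega))\cap\C^1([0,T];L^2(\Omega))$, together with the weighted bound \eqref{estimate_regular}.

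I would then estimate the left-hand side of \eqref{linearized-estimate-Schauder-1-BIS} in two blocks. For the first two terms, let $w\in P_s$ be the solution of \eqref{bilinear equation} from \Cref{Lemma_relation_control} associated with $(u_0,u_1,B)$; by the very definition of $\|\cdot\|_{P_s}$ one has $\|\rho y\|_{L^2(Q)}=\|\rho^{-1}Lw\|_{L^2(Q)}\le\|w\|_{P_s}$ and $s^{-1/2}\|\Psi^{-1/2}\eta^{-1}\rho v\|_{L^2(\Sigma)}\le\|w\|_{P_s}$. Testing \eqref{bilinear equation} with $z=w$ and controlling $\|\rho^{-1}w\|_{L^2(Q)}$, $\|\rho(0)^{-1}w(\cdot,0)\|_{L^2(\Omega)}$ and $\|\rho(0)^{-1}w_t(\cdot,0)\|_{L^2(\Omega)}$ by \eqref{Carleman-in-0-T}, using here that $u_1\in L^2(\Omega)$ and $B\in L^2(Q)$, gives the $\boldsymbol V$-analogue of \eqref{bound-w_s},
\begin{equation*}
\|w\|_{P_s}\le C\big(s^{-3/2}\|\rho B\|_{L^2(Q)}+s^{-1/2}\|\rho(0)u_0\|_{L^2(\Omega)}+s^{-3/2}\|\rho(0)u_1\|_{L^2(\Omega)}\big).
\end{equation*}
For the remaining three terms (carrying $(\rho y)_t$, $(\rho v)_t$ and $\nabla(\rho y)$) I would simply multiply \eqref{estimate_regular} by $s^{-1}$: its left-hand side then reads $s^{-1}\|(\rho y)_t\|_{L^2(Q)}+s^{-3/2}\|\Psi^{-1/2}\eta^{-1}(\rho v)_t\|_{L^2(\Sigma)}+s^{-2}\|\nabla(\rho y)\|_{L^2(Q)}+\cdots$, matching the normalization of \eqref{linearized-estimate-Schauder-1-BIS}, while its right-hand side becomes $Cs^{-3/2}\big(\|\rho B\|_{L^2(Q)}+\|\rho(0)u_1\|_{L^2(\Omega)}+s\|\rho(0)u_0\|_{L^2(\Omega)}+\|\rho(0)\nabla u_0\|_{L^2(\Omega)}\big)$.

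It remains to substitute the bound for $\|\rho B\|_{L^2(Q)}$ into these two estimates and to use $\rho(0)\le e^{-s}$ on $\overline\Omega$: the dominant contribution $s^{-3/2}\cdot\beta^\star c^{3/2}s^{5/2}=\beta^\star c^{3/2}s$ produces the logarithmic term $\beta^\star Cc^{3/2}s$ of \eqref{linearized-estimate-Schauder-1-BIS}, the $\alpha_2$-contribution yields $C\alpha_2 s^{-1/2}$, and the data contributions collapse to $Cs^{-1/2}e^{-s}\|u_0\|_{L^2(\Omega)}$ and $Cs^{-3/2}e^{-s}\big(\alpha_1 T^{1/2}|\Omega|^{1/2}+\|\nabla u_0\|_{L^2(\Omega)}+\|u_1\|_{L^2(\Omega)}\big)$, so that summing the two blocks gives exactly \eqref{linearized-estimate-Schauder-1-BIS}. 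No new analytic ingredient is needed beyond \Cref{Thm-regularity-one} (established in the appendix) and \Cref{estimation_lemma1-bis}; the only care required is the bookkeeping of the $s$-powers — in particular that the $s^{-1}$ rescaling of \eqref{estimate_regular} is precisely what makes the $\beta^\star$-term appear at order $s$ rather than $s^2$ — and the use of the sharper pairing bound $|\langle u_1,z(\cdot,0)\rangle|\le Cs^{-3/2}\|\rho(0)u_1\|_{L^2(\Omega)}\|z\|_{P_s}$, valid now that $u_1\in L^2(\Omega)$ rather than only $H^{-1}(\Omega)$.
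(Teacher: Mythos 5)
Your proof is correct and follows essentially the same route as the paper, whose own argument is simply to combine \Cref{Lemma_relation_control} with $r=0$ (for the first two terms), \Cref{Thm-regularity-one} rescaled by $s^{-1}$ (for the three derivative terms), and \Cref{estimation_lemma1-bis} for $\|\rho f(\widehat y)\|_{L^2(Q)}$. Your only addition is the sharpened pairing bound $|\int_\Omega u_1 z(\cdot,0)\,\mathrm{d}x|\le Cs^{-3/2}\|\rho(0)u_1\|_{L^2(\Omega)}\|z\|_{P_s}$ exploiting $u_1\in L^2(\Omega)$, which is indeed what makes the $u_1$-contribution of the first block come out at the stated order $s^{-3/2}e^{-s}\|u_1\|_{L^2(\Omega)}$ rather than the $s^{-1/2}\|\rho(0)u_1\|_{H^{-1}(\Omega)}$ that \eqref{weighted_estimate_control_traj-frac} gives directly.
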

\begin{proof}
This follows from \Cref{Lemma_relation_control} with $r=0$, \Cref{Thm-regularity-one} and \Cref{estimation_lemma1-bis} with $B=-f(\widehat{y})$.
\end{proof}	

\subsubsection{Stability of the class $\widetilde\C(s)$}\label{sec-stable-Schauder-bis}

\begin{lemma}\label{Stability_class_Schauder_bis}
	Assume that $f\in \mathcal{C}^0(\mathbb{R})$ satisfies \ref{H1} with $p=3/2$ and $\beta^\star$ small enough. Then, there exists $s\geq s_0$ large enough such that
	$\Lambda_s\big(\widetilde\C(s) \big) \subset \widetilde\C(s)$.
	\end{lemma}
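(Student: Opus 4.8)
\textbf{Proof plan for Lemma \ref{Stability_class_Schauder_bis}.}
The plan is to mimic exactly the argument of \Cref{Stability_class_Schauder}, but now controlling the three quantities $\|\rho y\|_{L^2(Q)}$, $\|(\rho y)_t\|_{L^2(Q)}$ and $\|\nabla(\rho y)\|_{L^2(Q)}$ that define $\widetilde\C(s)$, using the finer estimate \eqref{linearized-estimate-Schauder-1-BIS} in place of \eqref{linearized-estimate-Schauder-1}. Fix $s\geq s_0$ and $\yy\in\widetilde\C(s)$, and set $y:=\Lambda_s(\yy)$; by \Cref{Proposition-linear_to_nonlinear_Schauder-bis} we already know $y\in\C^0([0,T];H^1(\Omega))\cap\C^1([0,T];L^2(\Omega))\subset H^1(Q)$, so the only thing to check is that $y$ lies in the set $\widetilde\C(s)$.

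First I would read off from \eqref{linearized-estimate-Schauder-1-BIS}, dividing by the relevant power of $s$, the three bounds
$$
s^{-1}\|\rho y\|_{L^2(Q)}\leq C\alpha_2 s^{-3/2}+\beta^\star C c^{3/2}+ \big(C\|u_0\|_{L^2(\Omega)}+Cs^{-1}(\alpha_1 T^{1/2}|\Omega|^{1/2}+\|\nabla u_0\|_{L^2(\Omega)}+\|u_1\|_{L^2(\Omega)})\big)s^{-3/2}e^{-s},
$$
$$
s^{-2}\|(\rho y)_t\|_{L^2(Q)}\leq C\alpha_2 s^{-5/2}+\beta^\star C c^{3/2}s^{-1}+ \big(\dots\big)s^{-5/2}e^{-s},
$$
$$
s^{-3}\|\nabla(\rho y)\|_{L^2(Q)}\leq C\alpha_2 s^{-7/2}+\beta^\star C c^{3/2}s^{-2}+ \big(\dots\big)s^{-7/2}e^{-s}.
$$
Passing to the $\limsup$ as $s\to+\infty$, the terms carrying $e^{-s}$ and the negative powers of $s$ vanish, so $\limsup_{s\to+\infty}s^{-1}\|\rho y\|_{L^2(Q)}\leq\beta^\star C c^{3/2}$ while the two remaining ratios tend to $0$. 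Hence if $\beta^\star>0$ is chosen small enough that $\beta^\star C c^{3/2}<1$, there is $s_1\geq s_0$ such that for every $s\geq s_1$ one has simultaneously $\|\rho y\|_{L^2(Q)}\leq s$, $\|(\rho y)_t\|_{L^2(Q)}\leq s^2$ and $\|\nabla(\rho y)\|_{L^2(Q)}\leq s^3$, i.e. $y\in\widetilde\C(s)$; this proves $\Lambda_s(\widetilde\C(s))\subset\widetilde\C(s)$ for such $s$.

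There is no real obstacle here: the lemma is a direct corollary of the $s$-explicit estimate \eqref{linearized-estimate-Schauder-1-BIS}, whose proof is where the work sits (and which rests on \Cref{estimation_lemma1-bis} and \Cref{Thm-regularity-one}). The only point requiring a little care is the bookkeeping of powers of $s$: one must check that in each of the three inequalities the $\beta^\star$-term appears with a nonpositive power of $s$ (strictly negative except for $\|\rho y\|_{L^2(Q)}$), so that exactly one of the three $\limsup$'s is governed by $\beta^\star C c^{3/2}$ and the other two vanish, which is precisely the structure of \eqref{linearized-estimate-Schauder-1-BIS}. As in the Schauder part, the admissible lower bound $s_1$ depends logarithmically on $\|(u_0,u_1)\|_{\boldsymbol V}$ through the $e^{-s}$-weighted data terms.
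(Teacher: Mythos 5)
Your overall strategy is exactly the paper's: apply the $s$-explicit estimate \eqref{linearized-estimate-Schauder-1-BIS}, normalize each of the three quantities by the power of $s$ appearing in the definition \eqref{closed_convex_set_Schauder_tilde}, take $\limsup_{s\to+\infty}$, and conclude for $\beta^\star C c^{3/2}<1$. The conclusion is correct, but the one point you singled out as "requiring a little care" -- the bookkeeping of powers of $s$ -- is where you slipped. In \eqref{linearized-estimate-Schauder-1-BIS} the left-hand side already carries the weights $s^{-1}\|(\rho y)_t\|_{L^2(Q)}$ and $s^{-2}\|\nabla(\rho y)\|_{L^2(Q)}$, so from
\begin{equation*}
s^{-1}\|(\rho y)_t\|_{L^2(Q)}\;\leq\;\bigl(C\alpha_2 s^{-3/2}+\beta^\star C c^{3/2}\bigr)s+\bigl(\cdots\bigr)s^{-1/2}e^{-s}
\end{equation*}
one gets $s^{-2}\|(\rho y)_t\|_{L^2(Q)}\leq C\alpha_2 s^{-3/2}+\beta^\star C c^{3/2}+(\cdots)s^{-3/2}e^{-s}$, whose $\limsup$ is $\beta^\star C c^{3/2}$, not $\beta^\star C c^{3/2}s^{-1}\to 0$ as you wrote; likewise $\limsup_{s\to+\infty}s^{-3}\|\nabla(\rho y)\|_{L^2(Q)}\leq\beta^\star C c^{3/2}$, not $0$. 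You effectively divided the right-hand side by $s^2$ and $s^3$ instead of by $s$, i.e. you treated the estimate as if it bounded $\|(\rho y)_t\|$ and $\|\nabla(\rho y)\|$ unweighted. So your structural claim that "exactly one of the three $\limsup$'s is governed by $\beta^\star C c^{3/2}$ and the other two vanish" is false: all three $\limsup$'s equal $\beta^\star C c^{3/2}$, which is precisely why the class $\widetilde\C(s)$ is calibrated with the thresholds $s$, $s^2$, $s^3$. The final conclusion survives because the condition needed in each of the three cases is the same, namely $\beta^\star C c^{3/2}<1$, so the lemma follows once the powers are corrected.
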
 

\begin{proof}
For  any $s\geq s_0$ and $\yy\in \widetilde\C(s)$, let $y:=\Lambda_s(\yy)$. From the inequalities \eqref{linearized-estimate-Schauder-1-BIS}, we obtain that $\limsup_{s\to+\infty} s^{-1}\|\rho y \|_{L^2(Q)}\le \beta^\star C   c^{3/2}$, 
$\limsup_{s\to+\infty} s^{-2}\|(\rho y)_t \|_{L^2(Q)}\le \beta^\star C   c^{3/2}$
and \newline
$\limsup_{s\to+\infty} s^{-3}\|\nabla (\rho y) \|_{L^2(Q)}\le \beta^\star C   c^{3/2}$. Therefore, if $\beta^\star>0$ is small enough so that $\beta^\star C   c^{3/2} <1$, then for any $s\geq s_0$ large enough, $y\in \widetilde\C(s)$.
\end{proof}

\subsubsection{Relative compactness of the set $\Lambda_s(\widetilde\C(s))$}\label{Relative_Compactness_Schauder-bis}

\begin{lemma}\label{RelativeCompactness-bis}
Under the assumptions of Lemma \ref{Stability_class_Schauder}, $\Lambda_s(\widetilde\C(s))$ is a relatively compact subset of $\widetilde\C(s)$ for the $L^2(Q)$ norm.
\end{lemma}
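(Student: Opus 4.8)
The plan is to mirror exactly the argument of \Cref{RelativeCompactness}, but now working in the higher-regularity class $\widetilde\C(s)\subset H^1(Q)$ and using the interpolated estimate \eqref{weighted_estimate_control_traj-Hr} one notch higher in the Sobolev scale. First I would take an arbitrary sequence $(y_n)_{n\in\N}$ in $\Lambda_s(\widetilde\C(s))$, write $y_n=\Lambda_s(\widehat y_n)$ with $\widehat y_n\in\widetilde\C(s)$, and consider the differences $y_n-y_0$, which solve \eqref{bilinear equation} with right-hand side $B=f(\widehat y_n)-f(\widehat y_0)\in L^2(Q)$ and vanishing initial data. The key point is that, since $f$ satisfies \ref{H1} with $p=3/2$ and $\widehat y_n,\widehat y_0\in\widetilde\C(s)$, \Cref{estimation_lemma1-bis} gives a uniform bound on $\Vert\rho(f(\widehat y_n)-f(\widehat y_0))\Vert_{L^2(Q)}$ (one controls the difference by bounding each term separately, as in the proof of that lemma). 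Then \Cref{Interpolation} with $r\in(0,1)$, $r\neq 1/2$ arbitrarily small — applied to the $H^1$-regular situation of \Cref{Thm-regularity-one}, i.e. interpolating \eqref{weighted_estimate_control_traj-frac} against \eqref{estimate_regular} — shows that $\rho(y_n-y_0)$ is bounded in $\C^1([0,T];H^{1-r}(\Omega))\cap\C^0([0,T];H^{2-r}(\Omega))$, uniformly in $n$; likewise $(\rho(y_n-y_0))_t$ is bounded in $\C^0([0,T];H^{1-r}(\Omega))$.

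Next I would invoke the Aubin–Lions–Simon compactness lemma (\cite{Simon1}, Corollary 5 p.86): the embedding
\[
\big\{w:\ w\in L^\infty(0,T;H^{2-r}(\Omega)),\ w_t\in L^\infty(0,T;H^{1-r}(\Omega))\big\}\hookrightarrow H^1(Q)
\]
is compact for $r>0$ small (since $H^{2-r}(\Omega)\hookrightarrow\hookrightarrow H^1(\Omega)$ and the time derivative is bounded in $L^2(0,T;H^{1-r}(\Omega))\hookrightarrow L^2(0,T;L^2(\Omega))$, which gives compactness of $\rho(y_n-y_0)$ in $L^2(Q)$, and also compactness of $\nabla(\rho(y_n-y_0))$ and $(\rho(y_n-y_0))_t$ in $L^2(Q)$). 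Hence, up to a subsequence, $\rho(y_{n_k}-y_0)\to z$ in $H^1(Q)$, so that $\rho y_{n_k}\to \rho y_0+z$ in $H^1(Q)$, and in particular in $L^2(Q)$. Finally, since $\widetilde\C(s)$ is closed in $L^2(Q)$ (it is defined by non-strict inequalities on $\Vert\rho y\Vert_{L^2(Q)}$, $\Vert(\rho y)_t\Vert_{L^2(Q)}$, $\Vert\nabla(\rho y)\Vert_{L^2(Q)}$, all weakly lower semicontinuous, and convergence in $H^1(Q)$ forces convergence of these norms along a subsequence) and $\rho y_{n_k}\in\widetilde\C(s)$, the limit $\rho y_0+z$ lies in $\widetilde\C(s)$, proving relative compactness in the $L^2(Q)$ topology.

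The main obstacle I anticipate is the interpolation step: one must check that \Cref{Interpolation} (or a variant of it) genuinely delivers control-state regularity \emph{one full derivative above} the $L^2$ setting, i.e. that interpolating between \eqref{weighted_estimate_control_traj-frac} with $(u_0,u_1,B)\in\boldsymbol H\times L^2(0,T;H^{-1})$ and \eqref{estimate_regular} with $(u_0,u_1,B)\in\boldsymbol V\times L^2(Q)$ yields, for data in $H^{1-r}_0\times H^{-r}\times L^2(0,T;H^{-r})$ — and here $B=f(\widehat y_n)-f(\widehat y_0)$ is only in $L^2(Q)\subset L^2(0,T;H^{-r})$ with $r$ small, so one gains — boundedness of $\rho y$ in $\C^0([0,T];H^{2-r}(\Omega))\cap\C^1([0,T];H^{1-r}(\Omega))$. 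Since $\Lambda_s^0$ is already known to be continuous from $L^2(0,T;L^2(\Omega))\times H^1_0(\Omega)\times L^2(\Omega)$ into $(\C^0([0,T];H^1(\Omega))\cap\C^1([0,T];L^2(\Omega)))\times H^1(0,T;L^2(\partial\Omega))$ by \Cref{Thm-regularity-one}, and from the $\boldsymbol H$ setting by \Cref{Lemma_relation_control}, the real-interpolation argument used in the proof of \Cref{Interpolation} applies verbatim with $\theta=1-r$; the only care needed is that $B$ must be put in the \emph{intermediate} space $L^2(0,T;H^{-r}(\Omega))$, which holds since $L^2(Q)\hookrightarrow L^2(0,T;H^{-r}(\Omega))$, while the initial data $(u_0,u_1)\in\boldsymbol V$ automatically sits in the required interpolated space. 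Everything else — the uniform bound from \Cref{estimation_lemma1-bis}, the compact embedding, the closedness of $\widetilde\C(s)$ — is routine.
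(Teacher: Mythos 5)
Your approach is far more elaborate than what is needed here, and its key technical claim is wrong as stated. The paper's proof is a one-liner: unlike the class $\C(s)$ of the first part, which lives only in $L^\infty(0,T;L^2(\Omega))$ and therefore genuinely requires the regularity-gain argument of \Cref{RelativeCompactness}, the class $\widetilde\C(s)$ is \emph{by definition} bounded in $H^1(Q)$ (the bounds on $\Vert\rho y\Vert_{L^2(Q)}$, $\Vert(\rho y)_t\Vert_{L^2(Q)}$ and $\Vert\nabla(\rho y)\Vert_{L^2(Q)}$, together with $\rho,\rho^{-1}\in\C^\infty(\overline Q)$, bound $y$ itself in $H^1(Q)$). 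Since $\Lambda_s(\widetilde\C(s))\subset\widetilde\C(s)$ by \Cref{Stability_class_Schauder_bis} and the embedding $H^1(Q)\hookrightarrow L^2(Q)$ is compact, relative compactness for the $L^2(Q)$ norm is immediate, and the limit points stay in $\widetilde\C(s)$ by weak lower semicontinuity of the three defining norms. No property of the map $\Lambda_s$ beyond stability of the class is used.

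The genuine error in your argument is the interpolation step. \Cref{Interpolation} interpolates between the endpoint $\boldsymbol{H}\times L^2(0,T;H^{-1}(\Omega))\mapsto\C^0([0,T];L^2(\Omega))\cap\C^1([0,T];H^{-1}(\Omega))$ of \Cref{Lemma_relation_control} and the endpoint $\boldsymbol{V}\times L^2(Q)\mapsto\C^0([0,T];H^1(\Omega))\cap\C^1([0,T];L^2(\Omega))$ of \Cref{Thm-regularity-one}; with $\theta=1-r$ it therefore delivers $\rho(y_n-y_0)$ bounded in $\C^0([0,T];H^{1-r}(\Omega))\cap\C^1([0,T];H^{-r}(\Omega))$ --- one full derivative \emph{less} than the $\C^0([0,T];H^{2-r}(\Omega))\cap\C^1([0,T];H^{1-r}(\Omega))$ you assert. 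To reach $H^{2-r}$ you would need a higher-order regularity result (data in $(H^2(\Omega)\cap H^1_0(\Omega))\times H^1_0(\Omega)$ with a more regular right-hand side) as the upper endpoint, which the paper does not establish in the main text (cf.\ only the allusion in \Cref{p=troisdemi}). Your argument is nevertheless repairable: the correct output $\C^0([0,T];H^{1-r}(\Omega))\cap\C^1([0,T];H^{-r}(\Omega))$, combined with the uniform bound of \Cref{estimation_lemma1-bis} and the Aubin--Lions--Simon lemma, still yields a subsequence of $\rho(y_n-y_0)$ converging in $L^2(Q)$ (though not in $H^1(Q)$), and weak lower semicontinuity then places the limit in $\widetilde\C(s)$. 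So you would recover the statement, but only after deleting the overshooting regularity claim --- and at the cost of a long detour that the very definition of $\widetilde\C(s)$ makes unnecessary.
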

\begin{proof}
Since $\Lambda_s(\widetilde\C(s))\subset \widetilde\C(s)$, this follows from the fact that $\widetilde\C(s)$ is bounded in $H^1(Q)$.
\end{proof}

\begin{remark}\label{compact}
In fact  $\widetilde\C(s)$ is a compact subset of  $L^p(Q)$ norm, $1\le  p<2+\frac{4}d$ and thus $\Lambda_s(\widetilde\C(s))$ is a relatively compact subset of $\widetilde\C(s)$ for the $L^p(Q)$ norm.
\end{remark}

\subsubsection{Continuity of the map $\Lambda_s$ in $\widetilde\C(s)$}\label{Continuity-Schauder-bis}

\begin{lemma}\label{ContinuityMap-bis}
Assume that $f\in \C^0(\mathbb{R})$ satisfies \ref{H1} with $p=3/2$. Then, the map $\Lambda_s : \widetilde\C(s)\rightarrow \widetilde\C(s)$ is continuous with respect to the $L^2(Q)$ norm.
\end{lemma}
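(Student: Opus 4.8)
The plan is to follow the skeleton of the proof of \Cref{ContinuityMap}, the one new ingredient being that every element of $\widetilde\C(s)$ carries an $L^{p_0}(Q)$ bound with some $p_0>2$ (a consequence of its $H^1(Q)$ regularity), which is exactly the integrability needed to accommodate the endpoint exponent $p=3/2$. So let $(\yy_n)_{n\in\N}\subset\widetilde\C(s)$ converge to $\yy$ in $L^2(Q)$; then $\yy\in\widetilde\C(s)$, this set being convex and $L^2(Q)$-closed. Write $y_n:=\Lambda_s(\yy_n)$, $y:=\Lambda_s(\yy)$ and let $w_n,w$ be the associated solutions of \eqref{bilinear equation}, so that $y_n=\rho^{-2}Lw_n$ and $y=\rho^{-2}Lw$. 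Since $\yy_n\to\yy$ in $L^2(Q)$, any subsequence has a further subsequence converging to $\yy$ a.e.; by the standard subsequence principle it thus suffices to prove $y_n\to y$ in $L^2(Q)$ along such a subsequence, so I may assume $\yy_n\to\yy$ a.e. in $Q$.

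The first and only substantial step is to show $f(\yy_n)\to f(\yy)$ in $L^2(Q)$. Fix $p_0\in(2,2+\tfrac4d)$: by Remark~\ref{compact} (or simply because $\widetilde\C(s)$ is bounded in $H^1(Q)\hookrightarrow L^{p_0}(Q)$) the sequence $(\yy_n)_n$ is bounded in $L^{p_0}(Q)$. Using \ref{H1} with $p=3/2$ together with the elementary bound $|r|\ln_+^{3/2}|r|\le C_\varepsilon\big(|r|+|r|^{1+\varepsilon}\big)$, valid for every $\varepsilon>0$, one gets $|f(\yy_n)|\le C\big(1+|\yy_n|+|\yy_n|^{1+\varepsilon}\big)$ a.e.; choosing $\varepsilon>0$ with $1+\varepsilon<p_0/2$, the sequence $(f(\yy_n))_n$ is then bounded in $L^{q_0}(Q)$ with $q_0:=p_0/(1+\varepsilon)>2$. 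Continuity of $f$ gives $f(\yy_n)\to f(\yy)$ a.e., the boundedness in $L^{q_0}(Q)$ with $q_0>2$ makes $\{|f(\yy_n)-f(\yy)|^2\}_n$ uniformly integrable, and Vitali's convergence theorem yields $f(\yy_n)\to f(\yy)$ in $L^2(Q)$; since $\rho,\rho^{-1}\in L^\infty(Q)$, also $\rho\big(f(\yy_n)-f(\yy)\big)\to0$ in $L^2(Q)$.

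To conclude I would argue exactly as in the proof of \Cref{ContinuityMap}: by linearity of \eqref{bilinear equation}, $y_n-y=\rho^{-2}L(w_n-w)$ is precisely the controlled trajectory furnished by \Cref{Lemma_relation_control} for the data $(u_0,u_1,B)=(0,0,f(\yy)-f(\yy_n))\in\boldsymbol H\times L^2(Q)$. Applying \eqref{weighted_estimate_control_traj-frac} with $r=0$ then gives
\[
\|\rho(y_n-y)\|_{L^\infty(0,T;L^2(\Omega))}\le C\,s^{1/2}\,\|\rho(f(\yy_n)-f(\yy))\|_{L^2(Q)},
\]
and since $\rho^{-1}\le e^{cs}$ on $Q$ with $c=\|\phi\|_{L^\infty(Q)}$, we obtain $\|y_n-y\|_{L^2(Q)}\le e^{cs}T^{1/2}\|\rho(y_n-y)\|_{L^\infty(0,T;L^2(\Omega))}\to0$. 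This proves that $\Lambda_s$ is continuous on $\widetilde\C(s)$ for the $L^2(Q)$ norm.

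The main (and essentially the only) difficulty is the passage $f(\yy_n)\to f(\yy)$ in $L^2(Q)$. At the borderline $p=3/2$ the superlinear growth $|r|\ln_+^{3/2}|r|$, which behaves like $|r|^{1+\varepsilon}$ for every $\varepsilon>0$, is — under the sole $L^2(Q)$-type control available in $\C(s)$ — not quite enough to keep $f(\yy_n)$ bounded in an $L^q(Q)$ space with $q>2$; this is exactly why \Cref{ContinuityMap} could only reach convergence of the nonlinear term in $L^{2/(1+\varepsilon)}(Q)$ and had to route it through $H^{-1}(\Omega)$. Here the $H^1(Q)$ regularity built into $\widetilde\C(s)$, i.e. its embedding into $L^{p_0}(Q)$ with $p_0>2$ (Remark~\ref{compact}), precisely compensates for this loss, after which a uniform–integrability argument closes the gap; everything else is the same routine dominated–convergence and subsequence bookkeeping as in \Cref{ContinuityMap}.
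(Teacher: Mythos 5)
Your proof is correct and follows essentially the same route as the paper: the key point in both is that the $H^1(Q)$ bound built into $\widetilde\C(s)$ gives uniform $L^{p_0}(Q)$ integrability for some $p_0>2$, which absorbs the $|r|^{1+\varepsilon}$ growth coming from $|r|\ln_+^{3/2}|r|$ and yields $f(\yy_n)\to f(\yy)$ in $L^2(Q)$, after which the linear estimate \eqref{weighted_estimate_control_traj-frac} with $r=0$ closes the argument. The only (minor) difference is that you pass to the limit via boundedness in $L^{q_0}(Q)$, $q_0>2$, and Vitali's theorem, whereas the paper extracts an a.e.\ dominating function $z\in L^{2+\frac2d}(Q)$ and invokes dominated convergence; both are equally valid here.
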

\begin{proof}
Let $(\yy_n)_{n\in\mathbb{N}}$ be a sequence in $\widetilde\C(s)$ such that $(\yy_n)_{n\in\mathbb{N}}$ converges to $\yy\in \widetilde\C(s)$ with respect to the $L^2(Q)$ norm. Let $y_n := \Lambda_s(\yy_n)$ for all $n\in \mathbb{N}$. 
Since $\yy_n\to \yy$ in $L^2(Q)$ then $\yy_n\to \yy$ in $L^{2+\frac2d}(Q)$ (since, see Remark \ref{compact},  $\widetilde\C(s)$ is a compact subset of  $L^{2+\frac2d}(Q)$). Thus
there exist a subsequence $(\yy_{n_k})_{n_k\in\mathbb{N}}$  and $z\in L^{2+\frac2d}(Q)$ such that $\yy_{n_k}\to \yy$ a.e. and $|\yy_{n_k}|\le z$ a.e. for all $n_k$. It follows since $f\in\C^0(\R)$ that $f(\yy_{n_k})\to f(\yy)$ a.e. 
But, for all $n_k\in\N$, a.e: 
$$ \begin{aligned}
|f(\yy_{n_k})|
&\le  \alpha_1 + |\yy_{n_k}|\left( \alpha_2 + \beta^\star\ln_+^{3/2}(\yy_{n_k}) \right)\le  \alpha_1 + \alpha_2  |\yy_{n_k} |+
  \beta^\star|\yy_{n_k}|\ln_+^{3/2}(\yy_{n_k})  \\
  &\le  \alpha_1 + \alpha_2  |z|+
  \beta^\star|z|\ln_+^{3/2}(z)  \le  \alpha_1 + \alpha_2  |z|+
  C\beta^\star |z|^{1+\frac1d} 
  \end{aligned}$$
and $\alpha_1 + \alpha_2  |z|+
  C\beta^* |z|^{1+\frac1d}\in L^{2}(Q)$, since $z\in L^{2+\frac2d}(Q)$ and thus, from the dominated convergence theorem, $f(\yy_{n_k})\to f(\yy)$ in $L^2(Q)$. Indeed $f(\yy_{n })\to f(\yy)$ in $L^2(Q)$. Otherwise, there exists  $\varepsilon>0$ and a subsequence $(\yy_{n_k})_{n_k\in\mathbb{N}}$ of $(\yy_n)_{n\in\mathbb{N}}$ such that $\|f(\yy_{n_k})-f(\yy)\|_{L^2(Q)}\ge \varepsilon$.
  But, arguing as before, there exists a subsequence $(\yy_{n_{k'}})_{n_{k'}\in\mathbb{N}}$  of  $(\yy_{n_k})_{n_k\in\mathbb{N}}$ such that $f(\yy_{n_{k'}})\to f(\yy)$ in $L^2(Q)$ which leads to a contradiction.

  In particular, $z_n := y_n - y = \Lambda_s(\yy_n) - \Lambda_s(\yy)$ satisfies $z_n = \rho^{-2} L(w_n - w)$ with $w_n - w$ solution of \eqref{bilinear equation} associated with data $\left(0,0, f(\widehat{y})-f(\widehat{y}_n)\right)$. Thus, using that $L^2(0,T;L^{\frac2{1+ \varepsilon}}(\Omega)) \hookrightarrow L^2(0,T;H^{-1}(\Omega))$, estimate \eqref{weighted_estimate_control_traj-frac} with $B=f(\yy_n)-f(\yy )$, $u_0=u_1=0$ implies

We denote by $v_n$ and $v$ the associated control to $y_n$ and $y$ respectively and we have, by definition of the operator $\Lambda_s$, that $(y_n, v_n) = \rho^{-2}(Lw_n, s\eta^2\Psi\partial_\nu w_n)$ and  $(y, v) = \rho^{-2}\left( Lw, s\eta^2\Psi\partial_\nu w\right)$ with $w_n$ and $w$ solution of \eqref{bilinear equation} associated with $(u_0, u_1, -f(\widehat{y}_n))$ and $(u_0, u_1, -f(\widehat{y}))$ respectively. In particular, $z := y_n - y = \Lambda_s(\yy_n) - \Lambda_s(\yy)$ satisfies $z = \rho^{-2} L(w_n - w)$  with $w_n - w$ solution of \eqref{bilinear equation} associated with data $\left(0,0, f(\widehat{y})-f(\widehat{y}_n)\right)$. Then estimate \eqref{weighted_estimate_control_traj-frac} with $B=f(\yy_n)-f(\yy )$, $u_0=u_1=0$ and  $r=0$ implies $\|\rho (y_n-y)\|_{L^2(Q)} \leq Cs^{-3/2}\|\rho(f(\yy)-f(\yy_n))\|_{L^2(Q)}$ so that  $y_n \rightarrow y$ as $n\rightarrow +\infty$ in $L^2(Q)$.
\end{proof}

\appendix

\section{Appendix: Proof of \Cref{Thm-regularity-one} }\label{sec:appendix}

We check that the optimal pair $(y,v)$ defined in \Cref{Lemma_relation_control} belongs to $\C^0([0,T];H^1(\Omega))\times \C^0(0,T; H^{1/2}(\partial\Omega))$ as soon as $B\in L^2(Q)$ and $(u_0,u_1)\in\boldsymbol{V}$. This property has been proved in \cite{Bhandari_MCSS_2022} in the one dimensional case
(by generalizing \cite{ervedoza_zuazua_2010}). The occurence of the weights and the coupling between the primal variable $y$ and the dual one $w$ (see Remark \ref{coupling_ys_ws}) make the arguments and the computations more involved with respect \cite{ervedoza_zuazua_2010}. The proof is divided into four steps: the first and second steps are similar to \cite[Appendix]{Bhandari_MCSS_2022}; the third one is new with respect to \cite[Appendix]{Bhandari_MCSS_2022} as it provides an estimate of $\Vert \nabla(\rho y)\Vert_{L^2(Q)}$.

For all $f\in \C^0(\mathbb R;E)$  (where $E$ is a Banach space) and any $\tau\neq 0$, we define $\delta_\tau f:= f\left(t+\frac{\tau}{2} \right)- f\left(t-\frac{\tau}{2} \right)$, 
$\mathcal{T}_\tau f := \frac{1}{\tau}\delta_\tau \left(\frac{\delta_\tau f}{\tau}\right)$ and $\widetilde\delta_\tau f(t):=\frac{f(t+\tau)-f(t)}{\tau}$.

Let now $w\in P_s$ and $y \in L^2(Q)$  be given by \Cref{Lemma_relation_control}.  Then $\mathcal{T}_\tau w$ belongs to $P_s$, where $w$ as well as $y$  can be extended uniquely   on $(-\infty,0)$ and $(T,+\infty)$. Indeed, in the time interval $(-\infty, 0)$ the solution $y$ satisfies
$$
L y = 0 \text{ in } \Omega\times (-\infty, 0), \quad  y=0\text{ on } \partial\Omega \times (-\infty, 0), \quad (y(\cdot,0),\partial_t y(\cdot,0)) =(u_0,u_1) \text{ in } \Omega ,
$$
where the source term $B\in L^2(Q)$ is assumed to be extendable by $0$ outside $(0,T)$. Recall that the boundary condition $y(\cdot,t) =0$ on $\Gamma_0$ holds outside $(0,T)$ since $\eta=0$ (appearing in the formula of $v$) vanishes outside $(\delta, T-\delta)$, see \eqref{function_eta}.

Similarly, in $(T, + \infty)$ we can define the solution $y$ uniquely, and $y(t)=0$ for all $t\ge T$. It follows that the solution $y$ satisfies $y\in\C^0(\mathbb R;L^2(\Omega))\cap \C^1(\mathbb R;H^{-1}(\Omega))$ and  $y\in \C^0((-\infty,\delta ];H_0^1(\Omega))\cap \C^1((-\infty,\delta];L^2(\Omega))$ and  $y\in\C^0([T-\delta, +\infty  );H_0^1(\Omega))\cap \C^1([T-\delta,+ \infty);L^2(\Omega))$ (see \cite[Theorem 2.1, page 151]{Lasi-Trig-Lions}).
We extend as well the weight $\rho$ in $\Omega\times \mathbb{R}$. This ensures the extension of the solution $w$ which satisfies the following set of equations in $\mathbb R$
\begin{equation}\label{equation-w}
		L w = \rho^2 y  \text{ in } \Omega\times \mathbb{R}, \qquad w=0 \text{ on } \partial\Omega\times \mathbb{R}.
	\end{equation}  
 Moreover, it can be seen that $Lw=0$ in $[T, + \infty)$. 

\par\noindent
{\bf Step 1.} We assume that $u_0\in H^2(\Omega)\cap H^1_0(\Omega)$, $u_1\in H_0^1(\Omega)$ and $B\in \mathcal{D}(0,T;L^2(\Omega))$ and prove that $v\in H^1(0,T;L^2(\partial\Omega))$ and $y_t\in L^2(Q)$.

Since $w \in \CC^0(\mathbb R;H^1_0(\Omega))\cap \CC^1(\mathbb R;L^2(\Omega))$  solves \eqref{equation-w}, $\mathcal{T}_\tau w \in \CC^0([0,T]; H^1_0(\Omega)) \cap \CC^1([0,T]; L^2(\Omega))$ and $\partial_\nu \mathcal{T}_\tau w\in L^2(\Sigma )$. With  $z=\mathcal{T}_\tau w$ as test function in \eqref{bilinear equation}, we have  
\begin{multline}\label{Formulation_higher}
	\intq \rho^{-2}  L w L \mathcal{T}_\tau w \dx\dt + s  \int_{\Sigma}  \eta^2(t) \Psi(x)\rho^{-2} \partial_\nu w   \mathcal{T}_\tau \partial_\nu  w  d\Sigma \\
	=   \int_\Omega u_1  \mathcal{T}_\tau w(\cdot,0) \dx   - \int_\Omega u_0 \mathcal{T}_\tau w_t(\cdot,0) \dx + \intq B  \mathcal{T}_\tau w \dx\dt.
\end{multline}
 Proceeding as \cite[Appendix]{Bhandari_MCSS_2022} and using that the smooth function $\eta$ given by \eqref{function_eta} satisfies $\eta=0$ in $(-\infty, \delta]\cup [T-\delta, +\infty)$ (with $\delta>0$ given in \eqref{function_eta}), we get the following estimate (we refer to \cite[sub-steps 1 and 2 pages 108-110]{Bhandari_MCSS_2022})
\begin{align}\label{3} 
&\int_{Q} \rho^2(t) \left|\widetilde\delta_\tau(\rho^{-2} Lw )(t)\right|^2 \dx\dt   +s \int_{\Sigma} \Psi  \frac{ \eta^2(t)\rho^{-2}(t)+\eta^2(t+\tau)\rho^{-2}(t+\tau)  }{2}\left|\widetilde\delta_\tau(\partial_\nu  w)(t)\right|^2 d \Sigma 
\notag \\
= &    \int_{Q} \rho^2(t)\;\widetilde\delta_\tau(\rho^{-2} Lw)(t)\;\widetilde\delta_\tau(\rho^{-2})(t)  Lw(t+\tau) \dx\dt  
 -\frac1\tau \int_{-\tau}^{0}\int_\Omega \rho^{-2}(t+\tau)Lw(t+\tau) \;\widetilde\delta_\tau Lw(t)  \dx\dt 
\notag \\
&- \frac{1}{\tau}\int_T^{T-\tau}\int_\Omega \rho^{-2}(t+\tau)Lw(t+\tau) \;\widetilde\delta_\tau Lw(t)  \dx\dt  \notag \\
& -  s  \int_{\Sigma}  \Psi\;\widetilde\delta_\tau (\eta^2\rho^{-2})(t)  \frac{\partial_\nu w (t)+\partial_\nu w  (t+\tau) }{2} \;\widetilde\delta_\tau (\partial_\nu w ) (t)d\Sigma
\notag  \\
& - \intq B \, \mathcal{T}_\tau w(t) \dx\dt  -\int_\Omega u_1  \mathcal{T}_\tau w(\cdot,0) \dx  + \int_\Omega u_0 \mathcal{T}_\tau w_t(\cdot,0) \dx   
\end{align}
Then, following \cite[step 1, sub-step 3 page 111-114]{Bhandari_MCSS_2022}, we get that each term of the right hand side of \eqref{3} are uniformly bounded with respect to $\tau\in [0,\delta]$. Then we can conclude, from \eqref{3}, that the terms $\int_{Q} \rho^2(t) \left|\widetilde\delta_\tau(\rho^{-2} Lw )(t)\right|^2 \dx\dt$
and
$$ \int_{\Sigma} \Psi  \frac{ \eta^2(t)\rho^{-2}(t)+\eta^2(t+\tau)\rho^{-2}(t+\tau)  }{2}\left|\widetilde\delta_\tau(\partial_\nu  w)(t)\right|^2 d \Sigma $$
are bounded.  {\color{black}
Thus $ \rho  (\rho^{-2} Lw )_t\in L^2(Q)$, $\Psi^{1/2}  \eta  \rho^{-1}  \partial_\nu  w_t\in L^2(\Sigma)$, and $y\in H^1(0,T;L^2(\Omega))$, $v\in H^1(0,T; L^2(\partial\Omega))$.

Eventually, following \cite[step 1, sub-step 4 page 115]{Bhandari_MCSS_2022}, we prove that  $v\in \CC^0([0,T];H^{1/2}(\partial\Omega))$ and $y\in \CC^0([0,T];H^1(\Omega))\cap \CC^1([0,T];L^2(\Omega))$.

Since $\widetilde\delta_\tau w\in P$,  Carleman estimates \eqref{Carleman-in-0-T} gives
	    	\begin{multline}\label{ttt}
	    		s \intq \rho^{-2}  \biggl(|\widetilde\delta_\tau w_t|^2 + |\widetilde\delta_\tau \nabla w|^2\biggr) \dx \dt + s^3 \intq  \rho^{-2}  |\widetilde\delta_\tau w|^2 \dx \dt \\
	    		+	s \int_\Omega \rho^{-2}(0) \biggl(|\widetilde\delta_\tau(w_t)(0)|^2 + |\widetilde\delta_\tau(\nabla w)(0)|^2\biggr) \dx + s^3 \int_\Omega  \rho^{-2}(0) |\widetilde\delta_\tau w(0)|^2 \dx  \\
	    		\leq C \intq \rho^{-2}  |L (\widetilde\delta_\tau w)|^2 \dx \dt  + C s \int_{\Sigma} \eta^2 \Psi\rho^{-2} |\widetilde\delta_\tau(\partial_\nu  w)|^2 d \Sigma. 
	    	\end{multline}
Remark that  
  $\int_{\Sigma} \Psi\eta^2 \rho^{-2} \left|\widetilde\delta_\tau(\partial_\nu  w)\right|^2 d \Sigma 
$ and 
$\intq \rho^{-2} |L (\widetilde\delta_\tau w)|^2 \dx \dt$ are bounded since 
$$\int_{\Sigma} \Psi\eta^2(t) \rho^{-2}(t) \left|\widetilde\delta_\tau(\partial_\nu  w)(t)\right|^2 d \Sigma
\le 2\int_{\Sigma} \Psi\frac{\eta^2(t)\rho^{-2}(t)+\eta^2(t+\tau)\rho^{-2}(t+\tau)}{2} \left|\widetilde\delta_\tau(\partial_\nu  w)(t)\right|^2 d \Sigma 
$$
and
$$
 \intq \rho^{-2}(t) |L (\widetilde\delta_\tau w)(t)|^2 \dx \dt
\le 2 \int_{Q} \rho^2(t) \left|\widetilde\delta_\tau(\rho^{-2} Lw  )(t)\right|^2 \dx\dt\
+2\int_{Q} \rho^2(t)\left|\widetilde\delta_\tau(\rho^{-2})(t)Lw (t+\tau)\right|^2 \dx\dt
$$
thus the right hand side in \eqref{ttt} is bounded. Therefore  $\tau \mapsto \widetilde\delta_\tau w_t$ and $\tau \mapsto \widetilde\delta_\tau \nabla w$ are bounded in $L^2(Q)$ and thus $w_{tt}\in L^2(Q)$ and $w_{t}\in L^2(0,T;H^1_0(\Omega))$. Moreover, $\tau \mapsto \widetilde\delta_\tau w(0)$ is bounded in $H^1_0(\Omega)$ thus $w_t(0)\in H^1_0(\Omega)$. Eventually, since $\tau \mapsto L (\widetilde\delta_\tau w) $ is bounded in $L^2(Q)$, we conclude that $Lw_t\in L^2(Q)$ and then that $w_t\in P$; it follows that $\partial_\nu  w_{t } \in L^2(\Sigma)$.

 \begin{remark} We then have  $w \in \CC^1([0,T];H^1_0(\Omega))\cap \CC^2([0,T];L^2(\Omega))$  and since $Lw \in \CC^0([0,T];L^2(\Omega))$  we deduce that $\Delta w \,(=w_{tt}-L w)  \in  \CC^0([0,T];L^2(\Omega))$ and thus  $w \in \CC^0 ([0,T];H^2(\Omega))$. 
 \end{remark}
 \par\noindent
 Since $w \in \CC^0 ([0,T];H^2(\Omega))$ $v \in  \CC^0 ([0;T];H^{1/2} (\partial\Omega))\cap H^1(0,T;L^2(\partial\Omega))$ and $v$ satisfies the compatibility conditions $v(0)=0$, \cite[Theorem 2.1, page 151]{Lasi-Trig-Lions} leads to   
$y\in  \CC^0 ([0;T];H^1 (\Omega))\cap \CC^1 ([0;T];L^2(\Omega))\cap \CC^2 ([0;T];H^{-1}(\Omega))$.

\begin{remark} \label{remark9}
 As previously mentioned, $y$ and $w$ can be extended uniquely on $(-\infty;0)$ and $(T,+\infty)$ so that $w \in \CC^0 (\R;H^2(\Omega))\cap\CC^1(\R;H^1_0(\Omega))\cap \CC^2(\mathbb{R};L^2(\Omega))$,  $\partial_\nu  w  \in \CC^0(\R; H^{1/2}(\partial\Omega))$,  $\partial_\nu  w_{t } \in L^2_{\rm loc}(\R;L^2(\partial\Omega))$ 
  and $y \in \CC^0(\R;H^1(\Omega))\cap \CC^1(\R;L^2(\Omega))$.
 \end{remark}

 \par\noindent
{\bf Step 2.} We prove estimate on $v_t$ and $y_t$ in \eqref{estimate_regular}.
 }
Carleman estimate \eqref{Carleman-in-0-T} for $w_t\in P$ reads
	    	\begin{multline}\label{Carleman-w-t}
	    		s \intq \rho^{-2}  (| w_{tt}|^2 + | \nabla w_t|^2) \dx \dt + s^3 \intq  \rho^{-2}  | w_t|^2 \dx \dt \\
	    		+	s \int_\Omega \rho^{-2}(0) (|w_{tt}(0)|^2 + \nabla w_t(0)|^2) \dx + s^3 \int_\Omega  \rho^{-2}(0) | w_t(0)|^2 \dx  \\
	    		\leq C \intq \rho^{-2} |L w_t|^2 \dx \dt  + C s \int_{\Sigma} \eta^2\Psi  \rho^{-2}  |\partial_\nu  w_t|^2 d \Sigma.
	    	\end{multline}
\par\noindent		
Proceeding as in \cite[step 2 sub-step 1, page 116]{Bhandari_MCSS_2022}, we pass to the limit when $\tau\to 0$ in the equality \eqref{3} and obtain 
\begin{equation}\label{estimation-1-y'}
\begin{aligned}
\int_{Q} &\rho^2\left|y_t\right|^2 \dx\dt +s\int_{\Sigma}\Psi \eta^2 \rho^{-2} \left|\partial_\nu w_t\right|^2 d\Sigma
=-4s\lambda \beta\int_{Q}\big(t-\frac T2\big) \phi \rho^2  y_ty \dx\dt
- \int_\Omega y (0)( \rho^2y )_t(0) \dx \\
&\hspace{1cm}-s\int_{\Sigma}\Psi(\eta^2\rho^{-2})_t \partial_\nu w\partial_\nu w_t d\Sigma -\intq B  w_{tt} \dx\dt-\int_\Omega  w_{tt}(0)u_1\dx\\
&\hspace{1cm}-2s\lambda \beta T\int_\Omega  \phi(0)\rho^2(0)u_0^2\dx+ \int_\Omega \rho^2(0) u_1 u_0\dx-\int_\Omega \nabla u_0\nabla w_t(0) \dx
\end{aligned}
\end{equation}
and then estimate each term of the right side. In particular, for the third term, we write, using $(\rho^{-1})_t=-2s\lambda \beta (t-\frac T2) \phi\rho^{-1}$ that 
\begin{align*}
\big|s\int_{\Sigma}\Psi(\eta^2\rho^{-2})_t \partial_\nu w\partial_\nu w_t d\Sigma|&\le 2\left(s\int_{\Sigma}\Psi |(\eta\rho^{-1})_t|^2|\partial_\nu w|^2d\Sigma\right)^{1/2}\left(s\int_{\Sigma}\Psi \eta^2(t)\rho^{-2}(t)|\partial_\nu w_t|^2 d\Sigma\right)^{1/2}\\
&\le C\left(s^3\int_{\Sigma}\Psi\eta^2 \rho^{-2} |\partial_\nu w|^2 d \Sigma +s\int_{\Sigma}\Psi\rho^{-2} |\partial_\nu w|^2 d \Sigma \right)^{1/2}
\left(s\int_{\Sigma}\Psi \eta^2 \rho^{-2} | \partial_\nu w_t|^2d \Sigma\right)^{1/2}\\
&\le C\left( s\int_{\Sigma} \frac{\rho^2 }{\eta^2\Psi }v ^2 d\Sigma+s\int_{\Sigma}\Psi\rho^{-2} |\partial_\nu w|^2 d\Sigma\right) +\frac s8\int_{\Sigma}\Psi \eta^2 \rho^{-2} | \partial_\nu w_t|^2d \Sigma.
\end{align*}
We now estimate the term $\int_{\Sigma}\Psi\rho^{-2} |\partial_\nu w|^2 d\Sigma$ appearing in the previous inequality: proceeding as in \cite[Lemma 3.7]{Lions-1} with $q(x,t)=h\rho^{-2}(x,t)$
where $h\in\big(\CC^1(\overline{\Omega})\big)^d$ satisfies $h(x)=\nu(x)$ on $\partial\Omega$, we get (with the notation $\displaystyle f_kg_k=\sum_{k=1}^d f_kg_k$) 
$$\begin{aligned}
\frac12\int_{\Sigma}\rho^{-2} |\partial_\nu w|^2 d\Sigma=
&\int_{Q}\rho^{-2}\nabla w\cdot\nabla h_k\frac{\partial w}{\partial x_k}+2\int_{Q}\rho^{-1}\left(\nabla w\cdot\nabla\rho^{-1} - (\rho^{-1})_t w_t\right)h\cdot\nabla w\\
&+\frac12\int_{Q}\rho^{-2} \big(|w_t|^2-|\nabla w|^2\big)\nabla\cdot h+\int_{Q}\rho^{-1} \big(|w_t|^2-|\nabla w|^2\big)h\cdot\nabla\rho^{-1}\\
&+\int_\Omega \left[\rho^{-2}w_th\cdot\nabla w\right]_0^T-\int_{Q} yh\cdot\nabla w.
\end{aligned}
$$
Writing that $\vert\nabla \rho^{-1}\vert \leq Cs \rho^{-1}$ and $|(\rho^{-1})_t|\leq Cs\rho^{-1}$, and using that  $h\in\big(\CC^1(\overline{\Omega})\big)^d$  and $s\ge 1$ we obtain 
$$\begin{aligned}
\int_{\Sigma}\rho^{-2} |\partial_\nu w|^2 d\Sigma
\leq 
&Cs\int_\Omega \biggl(\rho^{-2}(0)(|w_t|^2+|\nabla w|^2)(0)+\rho^{-2}(T)(|w_t|^2+|\nabla w|^2)(T)\biggr)\d x \\
&+Cs\int_{Q} \rho^{-2}(|w_t|^2+|\nabla w|^2)\d x\d t +Cs^{-1}\int_{Q}\rho^2|y|^2 \d x\d t
\end{aligned}$$
leading, using \eqref{Carleman-in-0-T} and that $\Psi\in\big(\CC^1(\overline{\Omega})\big)^d$ to
\begin{equation}\label{estim-w-x}
s \int_{\Sigma}\Psi \rho^{-2} |\partial_\nu w|^2 d\Sigma\le Cs\left(\|\rho y \|_{L^2(Q)}^2+s^{-1}\biggl\|\frac{\rho}{\eta\Psi^{1/2}}v\biggr\|_{L^2(\Sigma)}^2\right).
\end{equation}
Thus, 
$$
|s\int_{\Sigma}\Psi(\eta^2\rho^{-2})_t \partial_\nu w\partial_\nu w_t d\Sigma|\le Cs\left(\|\rho y \|_{L^2(Q)}^2+\biggl\|\frac{\rho}{\eta\Psi^{1/2}}v \biggr\|_{L^2(\Sigma)}^2\right)+\frac s8\int_{\Sigma}\Psi \eta^2 \rho^{-2} |\partial_\nu w_t|^2 d\Sigma.
$$
Proceeding as in \cite[step 2 sub-step 2, page 117-120]{Bhandari_MCSS_2022} for the other terms in the right hand side of \eqref{estimation-1-y'}, we then get from \eqref{estimation-1-y'}
\begin{equation}\label{estimation-2-y'}
\begin{aligned}
\int_{Q}  \rho^2\left|y_t\right|^2 \dx\dt &+s \int_{\Sigma} \Psi\eta^2 \rho^{-2} |\partial_\nu w_t|^2  d\Sigma\\
&\leq C\left(s^2\|\rho y \|^2_{L^2(Q)}+s \|\frac{\rho}{\eta\Psi^{1/2}}v\|_{L^2(\Sigma)}^2 + s^{-1}\Vert \rho B\Vert^2_{L^2(Q)} \right. \\
&\qquad \left.+s\|\rho(0)u_0\|^2_{L^2(\Omega)} +s^{-1}\|\rho(0)\nabla u_0 \|_{L^2(\Omega)} ^2+s^{-1}\|\rho(0) u_1\|_{L^2(\Omega)}^2\right).
\end{aligned}
\end{equation}
We have, since $\eta\in \CC^1([0,T])$ and $v = s\eta^2 \Psi \rho^{-2}\partial_\nu w$
\begin{equation}\label{estimation-v-t}
 s^{-1}\int_{\Sigma} \frac{\rho^2}{\eta^2\Psi }   \left|v_t\right|^2d\Sigma 
 \le Cs\left(  \int_{\Sigma} \Psi \eta^2 \rho^{-2} |\partial_\nu w_t|^2 d\Sigma+ \int_{\Sigma} \frac{\rho^2}{\eta^2\Psi}v^2 d\Sigma
+ \int_{\Sigma} \Psi  \rho^{-2} |\partial_\nu w|^2 d\Sigma\right)
\end{equation}
thus, using that $\Psi\in\big(\CC^1(\overline{\Omega})\big)^d$ and \eqref{estim-w-x}, \eqref{estimation-2-y'}, implies for $s\ge s_0\ge 1$ that
$$
\begin{aligned}
\int_{Q} &\rho^2 \left|y_t\right|^2 \dx\dt +s^{-1}\int_{\Sigma} \frac{\rho^2}{\eta^2\Psi}   \left|v_t\right|^2d\Sigma
\\
& \le C \left(s^{-1} \|\rho B\|^2_{L^2(Q)} +s\|\rho(0)u_0\|^2_{L^2(\Omega)} +s^{-1}\|\rho(0)\nabla u_0\|_{L^2(\Omega)} ^2+s^{-1}\|\rho(0) u_1\|_{L^2(\Omega)}^2\right),
\end{aligned}
$$
{\color{black} which is the announced estimates \eqref{estimate_regular} for the first two terms in the case of regular data.

\par\noindent
{\bf Step 3.} We obtain the estimate on $\Vert \nabla(\rho y)\Vert_{L^2(Q)}$  and $\Vert \rho v\Vert_{L^\infty(0,T; H^{1/2}(\partial\Omega))}$ in  \eqref{estimate_regular}. With respect to \cite{Bhandari_MCSS_2022}, this part is new. From the definition of $v$, since $Lw(T)=(\rho^2 y)(T)=0$ and $s\ge 1$ we have 
\begin{equation}
\label{estimate_V-L2h1/2}
\begin{aligned}
\|\rho v\|_{L^2(0,T;H^{1/2}(\partial\Omega))}
&={ s}\|\Psi\rho^{-1}\partial_\nu w\|_{L^2(0,T;H^{1/2}(\partial\Omega))}={ s}\|\Psi\rho^{-1}h\cdot\nabla w\|_{L^2(0,T;H^{1/2}(\partial\Omega))}\\
&\le Cs\|\Psi \rho^{-1}h\cdot \nabla w  \|_{L^2(0,T;H^{1}(\Omega))}  \\
&\le C(s\|\nabla( \rho^{-1} w)  \|_{L^2(0,T;H^{1}(\Omega))}+s^2\|  \rho^{-1} w  \|_{L^2(0,T;H^{1}(\Omega))})\\
\end{aligned}
\end{equation}
and
\begin{equation}
\label{estimate_V}
\begin{aligned}
\|\rho v\|_{L^\infty(0,T;H^{1/2}(\partial\Omega))}
&={ s}\|\Psi\rho^{-1}\partial_\nu w\|_{L^\infty(0,T;H^{1/2}(\partial\Omega))}={ s}\|\Psi\rho^{-1}h\cdot\nabla w\|_{L^\infty(0,T;H^{1/2}(\partial\Omega))}\\
&\le Cs\|\Psi \rho^{-1}h\cdot \nabla w  \|_{L^\infty(0,T;H^{1}(\Omega))}  \\
&\le C(s\|\nabla( \rho^{-1} w)  \|_{L^\infty(0,T;H^{1}(\Omega))}+s^2\|  \rho^{-1} w  \|_{L^\infty(0,T;H^{1}(\Omega))}).
\end{aligned}
\end{equation}
Since $\nabla(\rho^{-1}w)=\nabla(\rho^{-1})w+\rho^{-1} \nabla w$ and
$\Delta (\rho^{-1}w)=\Delta (\rho^{-1})w+2\nabla(\rho^{-1}) \cdot \nabla w+ \rho^{-1} \Delta w$ we have
$$\begin{aligned}
\|  \rho^{-1} w  \|_{L^2(0,T;H^{1}(\Omega))}&\le{ C} \|\nabla( \rho^{-1} w)  \|_{L^2(0,T;L^2(\Omega))}\\
&\le C\Big(s\|\rho^{-1} w\|_{L^2(0,T;L^2(\Omega))}+\|\rho^{-1} \nabla w\|_{L^2(0,T;L^2(\Omega))}\Big),
\end{aligned}$$
$$\begin{aligned}
\|\nabla( \rho^{-1} w)  \|_{L^2(0,T;H^{1}(\Omega))}
\le &C\|\Delta (\rho^{-1} w)   \|_{L^2(0,T;L^2(\Omega))}\\
\le &C\Big(s^2\|\rho^{-1} w\|_{L^2(0,T;L^2(\Omega))}+s\|\rho^{-1} \nabla w\|_{L^2(0,T;L^2(\Omega))}+\| \rho^{-1} \Delta w\|_{L^2(0,T;L^2(\Omega))}\Big)\\
\le &C\Big(s^2\|\rho^{-1} w\|_{L^2(0,T;L^2(\Omega))}+s\|\rho^{-1} \nabla w\|_{L^2(0,T;L^2(\Omega))}+\| \rho^{-1}  w_{tt}\|_{L^2(0,T;L^2(\Omega))}\\
&+\| \rho  y\|_{L^2(0,T;L^2(\Omega))}\Big)
\end{aligned}$$
and thus 
$$
\begin{aligned}
\|\rho v\|_{L^2(0,T;H^{1/2}(\partial\Omega))}\le 
&C\Big(s^3\|\rho^{-1} w\|_{L^2(0,T;L^2(\Omega))}+s^2\|\rho^{-1} \nabla w\|_{L^2(0,T;L^2(\Omega))}+s\| \rho^{-1} w_{tt}\|_{L^2(0,T;L^2(\Omega))}\\
&+s\| \rho  y\|_{L^2(0,T;L^2(\Omega))} \Big).
\end{aligned}
$$
Estimate \eqref{Carleman-in-0-T} then rewrites :
\begin{equation}\label{Carleman-fin-w}
\begin{aligned}  
s^{1/2}\|  \rho^{-1}(0) w_{t}(0)&\|_{L^2(\Omega)}+s^{1/2}\|  \rho^{-1}(0)  \nabla w(0)\|_{L^2(\Omega)} +s^{3/2}\|  \rho^{-1}(0)   w(0)\|_{L^2(\Omega)} \\
 +s^{1/2}\|  \rho^{-1} &w_{t}\|_{L^2(Q)}+ s^{1/2}\|  \rho^{-1}\nabla w \|_{L^2(Q))}+s^{3/2}\|  \rho^{-1} w\|_{L^2(Q)} \\
\le & C \Big(\|\rho y\|_{L^2(Q)}+s^{-1/2}\biggl\|\frac{\rho}{\eta\Psi^{1/2}}v\biggr\|_{L^2(\Sigma)}\Big)\\
\le & C\Big( s^{-3/2} \|\rho B\|_{L^2(Q)} +s^{-1/2} \| \rho(0)u_0\|_{L^2(\Omega)} +s^{-3/2} \| \rho(0) u_1\|_{L^2(\Omega)}\Big)
\end{aligned} 
\end{equation}
while estimate \eqref{Carleman-w-t}  rewrites (using  estimates \eqref{estimation-2-y'}-\eqref{estimation-v-t})
\begin{equation}\nonumber
\begin{aligned}  
s^{1/2}\|  \rho^{-1}(0) &w_{tt}(0)\|_{L^2(\Omega)}+s^{1/2}\|  \rho^{-1}(0)  \nabla w_t(0)\|_{L^2(\Omega)} +s^{3/2}\|  \rho^{-1}(0)   w_t(0)\|_{L^2(\Omega)} \\
 +s^{1/2}\|  \rho^{-1} &w_{tt}\|_{L^2(Q)}+ s^{1/2}\|  \rho^{-1}\nabla w_t \|_{L^2(Q)}+s^{3/2}\|  \rho^{-1} w_t\|_{L^2(Q)} \\
\le & C \Big(s\|\rho y\|_{L^2(Q)}+\|\rho y_t\|_{L^2(Q)}+s^{1/2}\biggl\|\frac{\rho}{\eta\Psi^{1/2}}v\biggr\|_{L^2(\Sigma)}+s^{-1/2}\biggl\|\frac{\rho}{\eta\Psi^{1/2}}v_t\biggr\|_{L^2(\Sigma)}\Big)\\
\le & Cs^{-1/2} \mathcal{A}(u_0,u_1,B).
\end{aligned} 
\end{equation}
with 
$$
\mathcal{A}(u_0,u_1,B):=\|\rho B\|_{L^2(Q)} + s\| \rho(0)u_0\|_{L^2(\Omega)} + \| \rho(0) u_1\|_{L^2(\Omega)}+ \| \rho(0) \nabla u_0\|_{L^2(\Omega)}.
$$
Since $s\ge 1$, $|(\rho^{-1})_t|= Cs|\rho^{-1}|$, $|(\rho^{-1})_{tt}|= Cs^2|\rho^{-1}|$, $|\nabla (\rho^{-1})|= Cs|\rho^{-1}|$, $|\Delta (\rho^{-1})|= Cs^2|\rho^{-1}|$ and $\rho^{-1}(0)\Delta w(0)=\rho^{-1}w_{tt}(0)-\rho(0)u_0$, we deduce  
$$
\begin{aligned}
\|\Delta &(\rho^{-1} w)(0)\|_{L^2(\Omega)}+\|\nabla (\rho^{-1} w)_t)(0)\|_{L^2(\Omega)}\\
&\le C\Big(\|\rho^{-1}(0) w_{tt}(0)\|_{L^2(\Omega)}+\|\rho(0) u_0\|_{L^2(\Omega)}+s\| \rho^{-1} (0)\nabla w(0)\|_{L^2(\Omega)} +s^2\|\rho^{-1}(0)w(0)\|_{L^2(\Omega)}\\
&\hspace{1cm}+s\|\rho^{-1}(0) w_t(0)\|_{L^2(\Omega)}+\|\rho^{-1}(0) \nabla w_t(0)\|_{L^2(\Omega)}\Big)\\
&\le Cs^{-1} \mathcal{A}(u_0,u_1,B)
\end{aligned}
$$
and thus 
$$
\|\rho v\|_{L^2(0,T;H^{1/2}(\partial\Omega))}\le C\mathcal{A}(u_0,u_1,B).
$$
Since  $w_{tt}-\Delta w=L  w$, $w_{|\partial\Omega}=0$, we infer, using $\rho^{-1}\in\mathcal{C}^\infty(\overline{Q})$ and $y=\rho^{-2}Lw$, that
 \begin{align}\nonumber
	\begin{dcases}
		(\rho^{-1}w)_{tt}-\Delta (\rho^{-1}w)=\widetilde B_w:=\rho y+2\rho^{-1}_tw_t+\rho^{-1}_{tt}w-2\nabla \rho^{-1}\cdot\nabla w-\Delta\rho^{-1}w, \\
		(\rho^{-1}w)_{|\partial\Omega}=0. \\
 	\end{dcases}
\end{align}  
for which the standard estimates 
$$
\|(\rho^{-1}w)_t(t)\|_{L^2(\Omega)}+\|\nabla(\rho^{-1}w)(t)\|_{L^2(\Omega)} \le C\Big(\|\widetilde B_w\|_{L^2(Q)}+\|\rho^{-1}(0)w(0)\|_{L^2(\Omega)}+\|(\rho^{-1} w)_t(0)\|_{L^2(\Omega)}\Big)
$$
and
$$
\begin{aligned}
 \|(\rho^{-1}w)_{tt}(t)\|_{L^2(\Omega)}  &+\|\nabla(\rho^{-1}w)_t(t)\|_{L^2(\Omega)}  +\|\Delta( \rho^{-1}w)(t)\|_{L^2(\Omega)} \\
 &\le C\Big(\|\widetilde B_w\|_{H^1(0,T;L^2(\Omega))}+\|\Delta (\rho^{-1} w)(0)\|_{L^2(\Omega)}+\|\nabla (\rho^{-1} w)_t (0)\|_{L^2(\Omega)}\Big)
 \end{aligned}
 $$
 hold true, for all $t\geq 0$.

Using again that $s\ge 1$, $|(\rho^{-1})_t|= Cs|\rho^{-1}|$, $|(\rho^{-1})_{tt}|= Cs^2|\rho^{-1}|$,  $|\nabla (\rho^{-1})|= Cs|\rho^{-1}|$ and  $|\Delta (\rho^{-1})|= Cs^2|\rho^{-1}|$  we have the following estimates :
$$\begin{aligned}
\|\widetilde B_w\|_{L^2(Q)}
&\le C\Big(\|\rho y\|_{L^2(Q)}+s\|\rho^{-1} w_t\|_{L^2(Q)}+s^2\|\rho^{-1} w\|_{L^2(Q)}+s\|\rho^{-1} \nabla w\|_{L^2(Q)}\Big)\\
\le & C\Big( s^{-1} \|\rho B\|_{L^2(Q)} + \| \rho(0)u_0\|_{L^2(\Omega)} +s^{-1} \| \rho(0) u_1\|_{L^2(\Omega)}\Big),
\end{aligned}
$$
$$
\nonumber
\begin{aligned}
\|\rho^{-1}(0)w(0)\|_{L^2(\Omega)}+\|(\rho^{-1} w)_t(0)\|_{L^2(\Omega)}
&\le C\Big(s\|\rho^{-1}(0)w(0)\|_{L^2(\Omega)}+\|\rho^{-1} w_t(0)\|_{L^2(\Omega)}\Big)\\
\le & C\Big( s^{-2} \|\rho B\|_{L^2(Q)} +s^{-1} \| \rho(0)u_0\|_{L^2(\Omega)} +s^{-2} \| \rho(0) u_1\|_{L^2(\Omega)}\Big)
\end{aligned}
$$
and therefore
\begin{equation}\nonumber 
\|(\rho^{-1}w)_t(t)\|_{L^2(\Omega)}+\|\nabla(\rho^{-1}w)(t)\|_{L^2(\Omega)} \le   C\Big( s^{-1} \|\rho B\|_{L^2(Q)} + \| \rho(0)u_0\|_{L^2(\Omega)} +s^{-1} \| \rho(0) u_1\|_{L^2(\Omega)}\Big).
\end{equation}
We also have
$$
(\widetilde B_w)_t
=\rho_t y+\rho y_t  +3\rho^{-1}_{tt}w_t+2\rho^{-1}_tw_{tt}+\rho^{-1}_{ttt}w-2\nabla \rho^{-1}_t\cdot\nabla w-2\nabla \rho^{-1}\cdot\nabla w_t-\Delta\rho^{-1}_tw-\Delta\rho^{-1}w_t$$
and thus
\begin{equation}\nonumber
\begin{aligned}
\|(\widetilde B_w)_t\|_{L^2(Q)}
\le  &  C\Big(s\|\rho y\|_{L^2(Q)}+\|\rho y_t\|_{L^2(Q)}+s\|\rho^{-1} w_{tt}\|_{L^2(Q)} +s^2\|\rho^{-1} w_t\|_{L^2(Q)}\\
 &+s^3\|\rho^{-1} w\|_{L^2(Q)}+s\|\rho^{-1} \nabla w_t\|_{L^2(Q)}+s^2\|\rho^{-1} \nabla w\|_{L^2(Q)}\Big)\\
 \le & C\mathcal{A}(u_0,u_1,B)
 \end{aligned}
\end{equation}
and then
\begin{equation}\nonumber 
\|(\rho^{-1}w)_{tt}(t)\|_{L^2(\Omega)} +\|\nabla(\rho^{-1}w)_t(t)\|_{L^2(\Omega)} +\|\Delta( \rho^{-1}w)(t)\|_{L^2(\Omega)}\le C\mathcal{A}(u_0,u_1,B).
\end{equation}
\eqref{estimate_V} then reads  
$\|\rho v\|_{L^\infty(0,T;H^{1/2}(\partial\Omega))}\le C  s\mathcal{A}(u_0,u_1,B)$
which is a part of \eqref{estimate_regular}.

Eventually, from \eqref{linear_control_problem},  we get that
	\begin{equation}\nonumber
	\left\{
		\begin{aligned}
			&(\rho y)_{tt} -\Delta(\rho y) =\widetilde B_y :=\rho B+2\rho_ty_t+\rho_{tt} y-2\nabla\rho\cdot\nabla y-\Delta\rho\, y & \text{in } Q, \\
			&\rho y_{|\Sigma}=\rho v  &  \\
			&((\rho y)(\cdot,0),(\rho y)_t(\cdot,0)) = (\rho(0) u_0,\rho(0)u_1+\rho_t(0)u_0) &\text{in } \Omega,
		\end{aligned}
		\right.
	\end{equation}
for which we have the standard estimate
 $$
\begin{aligned}
\|(\rho y)_t\|_{L^\infty(0,T;L^2(\Omega))}+\|\nabla (\rho y)\|_{L^\infty(0,T;L^2(\Omega))}
\le    &C\Big(\|\widetilde B_y \|_{L^2(Q)}+\|(\rho v)_t\|_{L^2(\Sigma)}+\|\rho v\|_{L^\infty(0,T;H^{1/2}(\partial\Omega))}\\
&+\|\rho(0) u_0\|_{H^1(\Omega)}+\|\rho(0) u_1+\rho_t(0)u_0\|_{L^2(\Omega)}\Big).
\end{aligned}
$$
But
$$
\begin{aligned}
\|\widetilde B_y\|_{L^2(Q)}\le &C\Big(\|\rho B\|_{L^2(Q)} +s^2\|\rho y\|_{L^2(Q)}+s\|\rho y_t\|_{L^2(Q)}+ s\|\nabla(\rho  y) \|_{L^2(Q)} \Big)\\
\le & C s^{1/2} \Big(\mathcal{A}(u_0,u_1,B)+s^{1/2}\| \nabla (\rho  y) \|_{L^2(Q)} \Big),
\end{aligned}
$$
$$
\begin{aligned}
\|\nabla (\rho y)\|_{L^2(Q)} \le  C \Big(\|\rho(0) & u_0\|_{L^2(\Omega)}+\|\rho(0) u_1+\rho_t(0)u_0\|_{L^2(\Omega)}+\| (\rho y)_t\|_{L^2(Q)}+ \|(\rho v)_t\|_{L^2(\Sigma)}\\
&+\|\rho v\|_{L^2(0,T;H^{1/2}(\partial\Omega))}+\|\widetilde B_y\|_{L^2(0,T;H^{-1}(\Omega))}\Big)\end{aligned}
$$
and 
$$
\nonumber
\begin{aligned}
\|\widetilde B_y\|_{L^2(0,T;H^{-1}(\Omega))}
\le & C\Big(\|\rho B\|_{L^2(Q)} +s^2\|\rho y\|_{L^2(Q)}+s\|\rho y_t\|_{L^2(Q)} \Big)\\
\le & Cs^{1/2} \mathcal{A}(u_0,u_1,B)
\end{aligned}
$$
leading to $\|\nabla (\rho y)\|_{L^2(Q)} \le C s^{1/2}\mathcal{A}(u_0,u_1,B)$
that is the estimate of the third term in \eqref{estimate_regular} and 
$\|\widetilde B_y\|_{L^2(Q)} \le Cs^{3/2}\mathcal{A}(u_0,u_1,B)$.
We then deduce that 
$$
\begin{aligned}
\|(\rho y)_t\|_{L^\infty(0,T;L^2(\Omega))}+\|\nabla (\rho y)\|_{L^\infty(0,T;L^2(\Omega))}
\le &
Cs^{3/2}\mathcal{A}(u_0,u_1,B)
\end{aligned}
$$
that is, the estimate of the last two terms in \eqref{estimate_regular}.



\medskip 

\par\noindent
{\bf Step 4.} Case where $B\in L^2(Q)$ and $(u_0,u_1)\in \boldsymbol{V}$. We proceed by density as in \cite[Step 3, page 171]{Bhandari_MCSS_2022}.
}


\bibliographystyle{plain}
\bibliography{ref_wave}

\end{document}